\documentclass[reqno, 12pt]{amsart}
\usepackage{appendix}
\usepackage[utf8]{inputenc}
\usepackage[T1]{fontenc}
\usepackage{lmodern}
\usepackage{amsmath}
\usepackage{amsthm}
\usepackage{amssymb}
\usepackage{latexsym}
\usepackage{amsmath,amsfonts,amssymb}
\usepackage{upgreek}
\usepackage{enumerate}
\usepackage{mathrsfs}
\usepackage{mathscinet}
\usepackage{stmaryrd} 
\usepackage{graphicx}
\usepackage{xcolor}
\usepackage[all,cmtip]{xy}
\usepackage{url}
\usepackage{hyperref}
\usepackage[a4paper, hmargin=3cm, vmargin=3cm]{geometry}

\newcommand\mathens[1]{\mathbb{#1}} 

\newcommand{\ud}{\mathrm{d}}

\newcommand{\N}{\mathens{N}}
\newcommand{\Z}{\mathens{Z}}

\newcommand{\R}{\mathens{R}}
\newcommand{\C}{\mathens{C}}

\newcommand{\T}{\mathens{T}}

\newcommand\sphere[1]{\mathens{S}^{#1}}

\newcommand{\conto}{\mathrm{Cont}_{0}}

\newcommand{\tconto}{\widetilde{\mathrm{Cont}_{0}}}

\newcommand{\id}{\mathrm{id}}
\newcommand{\nlmas}\upmu



\newtheorem{thm}{Theorem}[section]
\newtheorem{lem}[thm]{Lemma}
\newtheorem{cor}[thm]{Corollary}
\newtheorem{prop}[thm]{Proposition}

\newtheorem{prop-def}[thm]{Definition-proposition}
\theoremstyle{plain}
\newtheorem*{coro*}{Corollary}

\theoremstyle{definition}

\theoremstyle{remark}

\newtheorem{exs}[thm]{Examples}
\newtheorem{rem}[thm]{Remark}

\newcommand\tpsi{\widetilde{\psi}}
\newcommand\tphi{\widetilde{\varphi}}

\newcommand\tPhi{\widetilde{\Phi}}
\newcommand\tPsi{\widetilde{\Psi}}

\newcommand\Leg{\mathcal{L}}
\newcommand\uLeg{\widetilde{\Leg}}
\newcommand\tDelta{\widetilde{\Delta}}
\newcommand\Gcont{\mathcal{G}}
\newcommand\uGcont{\widetilde{\Gcont}}
\newcommand\cleq{\preceq}

\newcommand\spec{\mathrm{Spec}}

\newcommand\tLambda{\widetilde{\Lambda}}

\DeclareFontFamily{U}{mathb}{\hyphenchar\font45}
\DeclareFontShape{U}{mathb}{m}{n}{
      <5> <6> <7> <8> <9> <10> gen * mathb
      <10.95> mathb10 <12> <14.4> <17.28> <20.74> <24.88> mathb12
}{}
\DeclareSymbolFont{mathb}{U}{mathb}{m}{n}
\DeclareMathSymbol{\cll}{3}{mathb}{"CE}

\makeatletter\let\@wraptoccontribs\wraptoccontribs\makeatother

\setlength{\parindent}{0cm}
\title[Spectral selectors and applications]{Spectral selectors on strongly orderable contact manifolds and applications}
\vspace{-2cm}
\author[P.-A. Arlove]{Pierre-Alexandre Arlove}
\address{P.-A. Arlove, Universit\'e de Strasbourg, IRMA UMR 7501, F-67000 Strasbourg, France}
\email{paarlove@unistra.fr}

\begin{document}

\begin{abstract}
We prove that the spectral selectors introduced in \cite{arlove2024contactnonsqueezingvariousclosed} for closed strongly orderable contact manifolds satisfy algebraic properties analogous to those of the spectral selectors for lens spaces constructed by Allais, Sandon, and the author using Givental’s nonlinear Maslov index. As applications, first we establish a contact big fiber theorem for closed strongly orderable contact manifolds as well as for lens spaces. Second, when the Reeb flow is periodic, we construct a stably unbounded conjugation invariant norm on the contactomorphism group universal cover. Moreover, when all its orbits have the same period, we show that the Reeb flow is a geodesic for the discriminant and oscillation norms of Colin-Sandon. Finally, we generalize a theorem of Albers-Fuchs-Merry stating that non-orderability implies the Weinstein conjecture. 
\end{abstract}

\maketitle

\section{Introduction}


Any closed cooriented contact manifold $(M,\xi)$ comes with a whole family of dynamical systems. Indeed, given a contact form $\alpha$ supporting the cooriented contact distribution $\xi$, i.e. $\ker\alpha=\xi$ and $\alpha(X)>0$ for any positive section $X$ of the oriented line bundle $TM/\xi\to M$, there exists a unique vector field $R_\alpha$, called its Reeb vector field, defined by $\alpha(R_\alpha)\equiv 1$ and $\ud \alpha(R_\alpha,\cdot)\equiv 0$. Integrating this vector field yields a one-parameter subgroup of diffeomorphisms $(\varphi_\alpha^t)_{t\in\R}$ called the Reeb flow. More precisely, for all $t\in\R$, $\varphi_\alpha^t$ is a contactomorphism, i.e. it is a diffeomorphism of $M$ that preserves the contact distribution $\xi$ and its coorientation.\\

The study of the Reeb flow dynamics is one of the driving forces in the field of contact geometry. One important question is whether any contactomorphism $\varphi$ isotopic to the identity admits a translated point $x\in M$ \cite{San11,Morse}, i.e. a point $x\in M$ such that $(\varphi^*\alpha)_x=\alpha_x$ and $\varphi(x)=\varphi_\alpha^t(x)$  for some translation $t\in\R$. While it is now known that the answer to the latter question is negative for some contact manifolds \cite{Cant2023}, in \cite{allais2023spectral} it is shown that for closed \textit{strongly orderable} contact manifolds (see Section \ref{se:spectral selector intro} and Section \ref{se:preleminaires} for a definition) the existence of translated points for contactomorphisms isotopic to the identity holds. Moreover, in the case where $(M,\xi)$ is strongly orderable, endowing the group of contactomorphisms isotopic to the identity $\conto(M,\xi)$ with the $C^1$-topology, we constructed in \cite{arlove2024contactnonsqueezingvariousclosed} a spectral selector on the universal cover $\Pi : \widetilde{\conto}(M,\xi)\to\conto(M,\xi)$, i.e. a continuous map $c:\widetilde{\conto}(M,\xi)\to\R$ such that $c(\tphi)$ is the translation of some translated point of $\Pi(\tphi)$.\\

In this paper, we first prove that the above mentioned spectral selectors possess new algebraic properties. These properties are analogous to those of the spectral selectors constructed in \cite{allais2024spectral} for the standard contact lens spaces, via Givental’s nonlinear Maslov index. 
Then, we derive from it results both on the contact topology of the underlying contact manifold and on the geometry of $\widetilde{\conto}(M,\xi)$. More precisely, in analogy with \cite{djordjević2025quantitativecontacthamiltoniandynamics,sun2025contactbigfibertheorems,uljarević2025contactquasistatesapplications}, but in the different setting of strongly orderable closed contact manifolds, we prove a \textit{contact big fiber theorem}, i.e. the impossibility to displace by contactomorphisms isotopic to the identity at least one fiber of any contact involutive map $f : M \to \mathbb{R}^N$ (see Section \ref{sec:bft intro}). Using the spectral selectors of \cite{allais2024spectral}, we obtain the analogous result in the context of lens spaces. On the other hand, when $(M,\xi)$ is strongly orderable and admits a periodic Reeb flow, these selectors allow us to define a new stably unbounded conjugation invariant norm on $\widetilde{\conto}(M,\xi)$. Moreover, if all the orbits of the periodic Reeb flow have the same minimal period, we show that the Reeb flow is a geodesic for the oscillation and discriminant norms of Colin-Sandon \cite{discriminante}. Finally, we discuss how these spectral selectors can be used to study the Weinstein conjecture. 


\subsection{Strong orderability and spectral selector}\label{se:spectral selector intro}

Before stating our main results, we recall the definition of being \textit{strongly orderable} for $(M,\xi)$. \\

To any contact form $\alpha$ supporting $\xi$, one associates  a $1$-form $\beta_\alpha$ on $G(M):=M\times M\times\R$ defined by $\beta_\alpha:=\mathrm{pr}_2^*\alpha-e^\theta\mathrm{pr}_1^*\alpha$, $\theta$ being the coordinate on $\R$ and $\mathrm{pr}_i : M \times M \times\R,\ (x_1,x_2,\theta)\mapsto x_i$ for $i\in\{1,2\}$. The distribution $\Xi_\alpha:=\ker\beta_\alpha$ turns $G(M)$ into a contact manifold and $\Delta:=\{(x,x,0)\ |\ x\in M\}$ is a Legendrian submanifold of $(G(M),\Xi_\alpha)$. The contact manifold $(M,\xi)$ is strongly orderable if, no loop of Legendrian submanifolds in $(G(M),\Xi_\alpha)$ containing $\Delta$, whose speed is everywhere transverse to the contact distribution $\Xi_\alpha$, can be contracted, through loops of Legendrians, to $\Delta$. Equivalently, any loop of Legendrians passing through $\Delta$ and moving everywhere transversely to $\Xi_\alpha$ is non-contractible in $\Leg(\Delta)$ the space of Legendrians isotopic to $\Delta$. This definition does not depend on the choice of the contact form $\alpha$ but only on the contact distribution $\xi$ (see Section \ref{se:preleminaires} for more details). \\


In the case where $(M,\xi)$ is closed and strongly orderable, for any contact form $\beta$ supporting $\Xi_\alpha$ with complete Reeb vector field, Allais and the author constructed in \cite[Theorem 1.1]{allais2023spectral}  continuous maps $\ell_-^\beta\leq\ell_+^\beta :\uLeg(\Delta)\times \uLeg(\Delta)\to\R$ (see Section \ref{se:def spec sel}), where $\uLeg(\Delta)$ stands for the universal cover of $\Leg(\Delta)$ endowed with the $C^1$-topology. Denoting by $\tDelta\in\uLeg(\Delta)$ the element that can represented by the constant Legendrian isotopy, the spectral selectors, introduced in by the author in \cite{arlove2024contactnonsqueezingvariousclosed}, and that we study in this paper are:
\begin{equation}\label{eq:def du selecteur spectral}
C_\pm^\alpha : \widetilde{\conto}(M,\xi)\to \R\quad \quad \tphi=[(\varphi_t)]\mapsto \ell_\pm^{\beta_\alpha}(\widetilde{G_\alpha}(\tphi)\cdot\tDelta,\tDelta)
\end{equation}
where $G_\alpha : \conto(M,\xi)\to \conto(G(M),\Xi_\alpha)$ is the group morphism defined by \[G_\alpha(\varphi) (x_1,x_2,\theta)\mapsto(x_1,\varphi(x_2),\theta+g(x_2))\text{ for any } \varphi\in\conto(M,\xi)\text{ with }\varphi^*\alpha=e^g\alpha\]  and $\widetilde{G_\alpha} : \widetilde{\conto}(M,\xi)\to\widetilde{\conto}(G(M),\Xi_\alpha)$ the induced group morphism, i.e. $\widetilde{G_\alpha}([(\varphi_t)])=[(G_\alpha(\varphi_t))]$.\\

In the previous definition, and throughout the paper, we identify the universal cover with the space of equivalence classes of based isotopies. Two isotopies are considered equivalent if they are homotopic relative to their endpoints.  See Section \ref{se:preleminaires} and Section \ref{se:def spec sel} for more details.\\

 For any $\tphi\in\widetilde{\conto}(M,\xi)$
 \[\spec^\alpha(\tphi):=\{t\in\R\ |\ \Pi(\tphi)(x)=\varphi_\alpha^t(x),\ (\Pi(\tphi)^*\alpha)_x=\alpha_x\}\] stands for the $\alpha$-spectrum of $\tphi$, and, for all $s\in\R$, we write $\tphi_\alpha^s:=[(\varphi_\alpha^{ts})_{t\in[0,1]}]\in\widetilde{\conto}(M,\xi)$.

\begin{thm}\label{thm:specselstrongord}
If $(M,\xi)$ is a closed strongly orderable contact manifold and $\alpha$ is a contact form supporting $\xi$, there exist continuous maps $C_-^\alpha\leq C_+^\alpha : \tconto(M,\xi)\to\R$ that satisfy the following properties for all $\tphi,\tpsi\in\widetilde{\conto}(M,\xi)$:
\begin{enumerate}[1.]
        \item $C_\pm^\alpha(\tphi)\in\spec^{\alpha}(\tphi)$ 
        \item $C_\pm^\alpha(\widetilde{\id})=0$ and $C_\pm^\alpha(\tphi_{\alpha}^s\cdot\tphi)=s+C_\pm^\alpha(\tphi)$, for all $s\in\R$
        \item if $\tphi\cleq \tpsi$ -- i.e. there exist contact isotopies $(\varphi_t),(\psi_t)$ such that $[(\varphi_t)]=\tpsi$, $[(\psi_t)]=\tpsi$ and $\alpha\left(\frac{d}{dt}\varphi_t\right)\leq\alpha\left(\frac{d}{dt}\psi_t\right)$ --  then $C_\pm^\alpha(\tphi)\leq C_\pm^\alpha(\tpsi)$
              \item $C^\alpha_+(\tphi\cdot\tpsi)\leq C_+^\alpha(\tphi)+ \ell_+^{G_\alpha(\varphi)^*\beta_\alpha}(\widetilde{G_\alpha}(\tpsi)\cdot\tDelta,\tDelta)$ and 

              \noindent
              $C^\alpha_-(\tphi\cdot\tpsi)\geq C_-^\alpha(\tphi)+ \ell_-^{G_\alpha(\varphi)^*\beta_{\alpha}}(\widetilde{G_\alpha}(\tpsi)\cdot\tDelta,\tDelta)$ where $\varphi:=\Pi(\tphi)$
        \item $C_+^\alpha(\tphi)=C_-^\alpha(\tphi)=0$ implies that $\Pi(\tphi)=\id$; 
        \end{enumerate}
     if moreover $\tphi$ can be represented by a $\alpha$-strict contact isotopy $(\varphi_t)$, i.e. $\varphi_t^*\alpha=\alpha$ for all $t$ and $[(\varphi_t)]=\tphi$, then
\begin{enumerate}[1.]\setcounter{enumi}{5}
\item $C_+^\alpha(\tphi\cdot\tpsi)\leq C_+^\alpha(\tphi)+C_+^\alpha(\tpsi)$ and $C_-^\alpha(\tphi\cdot\tpsi)\geq C_-^\alpha(\tphi)+C_-^\alpha(\tpsi)$
\item $ C_\pm^\alpha(\tphi\cdot\tpsi\cdot\tphi^{-1})=C_\pm^\alpha(\tpsi)$
\item $C_+^\alpha(\tphi)=-C_-^\alpha(\tphi^{-1})$.
    \end{enumerate}    
\end{thm}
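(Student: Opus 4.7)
The strategy is to transfer each property from the Legendrian spectral selectors $\ell_\pm^{\beta_\alpha}$ of \cite{allais2023spectral} to $C_\pm^\alpha$ via the group morphism $\widetilde{G_\alpha}$. Three foundational computations underpin the argument. \emph{First}, the Reeb vector field of $\beta_\alpha$ on $G(M)$ is the lift of $R_\alpha$ by $\mathrm{pr}_2$; consequently $G_\alpha(\varphi_\alpha^t)$ coincides with the Reeb flow of $\beta_\alpha$, so $\widetilde{G_\alpha}(\tphi_\alpha^s)$ is the lifted Reeb shift by $s$. \emph{Second}, Reeb chords of $\beta_\alpha$ of length $t$ from $G_\alpha(\varphi)(\Delta)$ to $\Delta$ are in bijection with translated points of $\varphi$ of translation $t$, giving $\spec^{\beta_\alpha}(\widetilde{G_\alpha}(\tphi)\cdot\tDelta,\tDelta)=\spec^\alpha(\tphi)$. \emph{Third}, the $\beta_\alpha$-velocity of the Legendrian isotopy $t\mapsto G_\alpha(\varphi_t)(\Delta)$ at the image of $x\in M$ equals $(\alpha(\dot\varphi_t))\circ\varphi_t(x)$, so an $\alpha$-positive isotopy on $M$ induces a $\beta_\alpha$-positive Legendrian isotopy of $\tDelta$.

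Granting these, Properties 1, 2, 3 and 5 follow by direct translation of the corresponding properties of $\ell_\pm^{\beta_\alpha}$: spectrality from the second computation; normalization and the Reeb shift from the first, combined with the shift property of $\ell_\pm$; monotonicity from the third; nondegeneracy from the observation that $G_\alpha(\varphi)(\Delta)=\Delta$ forces $\varphi=\id$. Property 4 is the main algebraic input: inside $\ell_+^{\beta_\alpha}(\widetilde{G_\alpha}(\tphi)\widetilde{G_\alpha}(\tpsi)\cdot\tDelta,\tDelta)$, one rewrites $\tDelta$ on the right as $\widetilde{G_\alpha}(\tphi)\cdot\widetilde{G_\alpha}(\tphi)^{-1}\tDelta$, applies the pullback formula $\ell_\pm^{\Phi^*\beta}(\tLambda_1,\tLambda_0)=\ell_\pm^\beta(\tPhi\cdot\tLambda_1,\tPhi\cdot\tLambda_0)$ with $\Phi=G_\alpha(\varphi)$, and then the triangle inequality of $\ell_+$; the resulting boundary term $\ell_+^{\Phi^*\beta_\alpha}(\tDelta,\widetilde{G_\alpha}(\tphi)^{-1}\tDelta)$ is identified with $C_+^\alpha(\tphi)$ by a second pullback. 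For Properties 6 and 8, the crucial ingredient is that an $\alpha$-strict isotopy $(\varphi_t)$ yields $G_\alpha(\varphi_t)(x_1,x_2,\theta)=(x_1,\varphi_t(x_2),\theta)$, hence $G_\alpha(\varphi)^*\beta_\alpha=\beta_\alpha$: Property 6 is then the specialization of Property 4, and Property 8 combines the swap identity $\ell_+^\beta(\tLambda_1,\tLambda_0)=-\ell_-^\beta(\tLambda_0,\tLambda_1)$ with pullback-invariance by the now $\beta_\alpha$-strict lift $\widetilde{G_\alpha}(\tphi)$.

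The main obstacle I anticipate is Property 7, conjugation invariance under strict isotopies. Unlike the rest, it is not a formal consequence of the pullback and triangle identities: after pulling out $\widetilde{G_\alpha}(\tphi)$ one is left with $\ell_\pm^{\beta_\alpha}(\widetilde{G_\alpha}(\tpsi)\cdot\widetilde{G_\alpha}(\tphi)^{-1}\tDelta,\widetilde{G_\alpha}(\tphi)^{-1}\tDelta)$, and the second argument has moved. My plan is a continuity argument: along a path $\widetilde{\sigma_s}$ of $\alpha$-strict lifts joining $\widetilde{\id}$ to $\tphi$, the function $s\mapsto C_\pm^\alpha(\widetilde{\sigma_s}\cdot\tpsi\cdot\widetilde{\sigma_s}^{-1})$ is continuous, and each value lies in $\spec^\alpha(\sigma_s\Pi(\tpsi)\sigma_s^{-1})$; since strict contactomorphisms commute with $R_\alpha$, the correspondence $x\mapsto\sigma_s(x)$ identifies this spectrum with $\spec^\alpha(\tpsi)$ as a subset of $\R$. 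Constancy should then follow from the nowhere-dense nature of the spectrum in the strongly orderable setting, an argument parallel to the one developed for lens spaces in \cite{allais2024spectral}; if a subtlety with accumulation in the spectrum appears, one can fall back on an explicit Legendrian-homotopy construction exploiting the Reeb-invariance of $\alpha(\dot\varphi_t)$ for strict isotopies.
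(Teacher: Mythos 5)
Your proposal is correct and follows essentially the same route as the paper: the same three foundational computations (the $\beta_\alpha$-Reeb flow as the lift of $\varphi_\alpha^t$, the identification of spectra via translated points, and the velocity identity), the same use of the triangle/pullback/swap properties of $\ell_\pm^{\beta_\alpha}$ for Properties 4, 6 and 8, and for Property 7 exactly the paper's continuity-plus-nowhere-dense-spectrum argument (the nowhere density needed is the standard Lemma 2.11 of Albers--Merry and poses no difficulty, so your fallback is not needed). The only cosmetic difference is that in Property 4 you apply the pullback identity before the triangle inequality rather than after, which yields the same chain of terms.
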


The first four properties of the spectral selectors stated in Theorem \ref{thm:specselstrongord} are enough to prove the contact big fiber theorem for strongly orderable closed contact manifolds, stated below as Theorem \ref{thm:bft}. Moreover, Theorem \ref{thm:specselstrongord} can essentially be seen as a reformulation of Proposition 7.2 in \cite{arlove2024contactnonsqueezingvariousclosed}. The genuinely new algebraic properties of $C_\pm^\alpha$ established in this paper, which play a key role in deriving results about the geometry of $\widetilde{\conto}(M,\xi)$ (see Section \ref{se:geometry} below), are presented in the next theorem under the extra assumption that the Reeb flow is periodic. For any $T>0$, and $x\in\R$, the real numbers
\[ \left\lceil x\right\rceil_T:=T\left\lceil \frac{x}{T}\right\rceil \quad \text{ and } \quad \left\lfloor x\right\rfloor_T:=T\left\lfloor \frac{x}{T}\right\rfloor\]
are respectively the smallest multiple of $T$ greater or equal to $x$ and the biggest multiple of $T$ smaller or equal to $x$. 

\begin{thm}\label{thm:thm2}
    If $(M,\xi)$ is a closed strongly orderable contact manifold and $\alpha$ is a contact form supporting $\xi$ whose Reeb flow is $T$-periodic, for some $T>0$, the maps $C_\pm^\alpha : \tconto(M,\xi)\to\R$  satisfy moreover for any $\tphi,\tpsi\in\widetilde{\conto}(M,\xi)$:
    \begin{enumerate}[1.]
        \item $C^\alpha_+(\tphi\cdot\tpsi)\leq C_+^\alpha(\tphi)+\left\lceil C_+^\alpha(\tpsi)\right\rceil_T$ and $C^\alpha_-(\tphi\cdot\tpsi)\geq C_-^\alpha(\tphi)+\left\lfloor C_-^\alpha(\tpsi)\right\rfloor_T$
        \item $\left\lceil C_\pm^\alpha(\tpsi\cdot\tphi\cdot\tpsi^{-1})\right\rceil_T=\left\lceil C_\pm^\alpha(\tphi)\right\rceil_T$ and $\left\lfloor C_\pm^\alpha(\tpsi\cdot\tphi\cdot\tpsi^{-1})\right\rfloor_T=\left\lfloor C_\pm^\alpha(\tphi)\right\rfloor_T$
        \item $\left\lceil C_+^\alpha(\tphi)\right\rceil_T=-\left\lfloor C_-(\tphi^{-1})\right\rfloor_T$. 
    \end{enumerate}
\end{thm}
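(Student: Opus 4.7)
The plan is to derive all three properties from Property 4 of Theorem \ref{thm:specselstrongord} together with a single new key lemma --- a comparison result for the Legendrian spectral invariants $\ell_\pm^\beta$ under change of supporting contact form on $G(M)$, in the presence of a $T$-periodic Reeb flow. This lemma is the heart of the matter: it quantifies the defect between $\ell_+^{\beta'}$ and $\ell_+^{\beta_\alpha}$ that prevents Properties 6--8 of Theorem \ref{thm:specselstrongord} from extending to general (non-strict) $\tphi$, and the $T$-periodicity of the Reeb flow is precisely what confines this defect to integer multiples of $T$.

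\textbf{The key lemma.} I would establish that if $\beta_1, \beta_2$ are two contact forms on $G(M)$ supporting $\Xi_\alpha$ with $T$-periodic Reeb flows that represent the same loop in $\widetilde{\conto}(G(M),\Xi_\alpha)$, then for all $\tLambda,\tLambda'\in\uLeg(\Delta)$,
\[\lceil \ell_+^{\beta_1}(\tLambda,\tLambda')\rceil_T=\lceil \ell_+^{\beta_2}(\tLambda,\tLambda')\rceil_T,\qquad \lfloor \ell_-^{\beta_1}(\tLambda,\tLambda')\rfloor_T=\lfloor \ell_-^{\beta_2}(\tLambda,\tLambda')\rfloor_T.\]
The proof would connect $\beta_1, \beta_2$ by a continuous path of contact forms supporting $\Xi_\alpha$, using the continuity of $\beta\mapsto \ell_\pm^\beta$ and the fact that each value $\ell_\pm^\beta(\tLambda,\tLambda')$ lies in the $\beta$-translation spectrum of $(\tLambda,\tLambda')$, a $T$-periodic subset of $\R$. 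The loop-class hypothesis --- automatic whenever $\beta_2=f^*\beta_1$ for a contactomorphism $f$ of $(G(M),\Xi_\alpha)$ --- rules out the selector accumulating an extra multiple of $T$ along the deformation.

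\textbf{Property 1} then combines Property 4 of Theorem \ref{thm:specselstrongord} with this lemma. The form $\beta':=G_\alpha(\varphi)^*\beta_\alpha$ has $T$-periodic Reeb flow (conjugate to that of $\beta_\alpha$ by $G_\alpha(\varphi)$), and the lemma yields $\ell_+^{\beta'}(\widetilde{G_\alpha}(\tpsi)\cdot\tDelta,\tDelta)\leq \lceil C_+^\alpha(\tpsi)\rceil_T$, from which Property 1 for $C_+^\alpha$ follows; the $C_-^\alpha$ statement is dual.

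\textbf{Properties 2 and 3} combine the lemma with the duality $\ell_-^\beta(\tLambda,\tLambda')=-\ell_+^\beta(\tLambda',\tLambda)$ established in \cite{allais2023spectral} and the change-of-form identity $\ell_\pm^{f^*\beta}(\tLambda,\tLambda')=\ell_\pm^\beta(f\cdot\tLambda,f\cdot\tLambda')$. For Property 3, computing $C_-^\alpha(\tphi^{-1})=-\ell_+^{\beta_\alpha}(\tDelta,\widetilde{G_\alpha}(\tphi)^{-1}\cdot\tDelta)=-\ell_+^{\beta'}(\widetilde{G_\alpha}(\tphi)\cdot\tDelta,\tDelta)$ via duality and change-of-form with $f=G_\alpha(\varphi)$, then applying the lemma to swap $\beta'$ for $\beta_\alpha$, yields $\lceil -C_-^\alpha(\tphi^{-1})\rceil_T=\lceil C_+^\alpha(\tphi)\rceil_T$. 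Property 2 uses a similar but slightly more involved dance: change-of-form with $f=G_\alpha(\Pi(\tpsi))$ transports $\ell_+^{\beta_\alpha}(\widetilde{G_\alpha}(\tpsi\cdot\tphi\cdot\tpsi^{-1})\cdot\tDelta,\tDelta)$ to an expression involving $\ell_+^{\beta_\tpsi}$ with the Legendrian pair $(\widetilde{G_\alpha}(\tphi)\cdot\tLambda_0,\tLambda_0)$ for $\tLambda_0=\widetilde{G_\alpha}(\tpsi)^{-1}\cdot\tDelta$ and $\beta_\tpsi=G_\alpha(\Pi(\tpsi))^*\beta_\alpha$; two applications of the lemma (swapping $\beta_\tpsi$ for $\beta_\alpha$ and then shifting back via a second change-of-form) reduce the ceiling to $\lceil C_+^\alpha(\tphi)\rceil_T$.
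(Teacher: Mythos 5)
Your overall strategy---reducing everything to a change-of-form comparison for $\ell_\pm$---is in the right spirit, but your proof of the key lemma has a genuine gap, and the derivation of Property~2 from it does not close. On the lemma: if you connect $\beta_1$ and $\beta_2$ by an \emph{arbitrary} continuous path of contact forms supporting $\Xi_\alpha$, the intermediate forms have no reason to have $T$-periodic Reeb flows, so their translation spectra are not $T$-periodic subsets of $\R$ and the ``locally constant ceiling'' argument collapses along the path. (Compare Lemma~\ref{lem:invariance signe}: for $\gamma=e^f\beta$ the selectors are only pinched between $e^{\inf f}\ell_\pm^\beta$ and $e^{\sup f}\ell_\pm^\beta$, so ceilings can drift by many multiples of $T$ for generic interpolating forms.) What actually makes the comparison work---and what the paper exploits---is not continuity in the contact form but the fact that the time-$NT$ Reeb flow of $\beta_\alpha$ defines a \emph{central} element $\tphi_\beta^{NT}$ of $\widetilde{\conto}(G(M),\Xi_\alpha)$, so that $\left\lceil \ell_+^\beta(\tLambda,\tDelta)\right\rceil_T$ is governed by the order-theoretic condition $\tLambda\cleq\tphi_\beta^{NT}\cdot\tDelta$, which can be pushed past $\tPhi$ and $\tPsi$ by centrality; this is how Lemmas~\ref{lem:poincare} and~\ref{lem:inegalite triangulaire} prove Properties~3 and~1. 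You also omit the boundary case $C_\pm^\alpha(\tphi)\in T\Z$, where the ceiling and floor genuinely can jump; the paper handles it by perturbing $\tphi$ by small Reeb shifts and invoking the nowhere density of $\spec^\alpha(\tphi)$ (\cite[Lemma 2.11]{albers}).

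For Property~2, after applying the change-of-form identity with $f=G_\alpha(\Pi(\tpsi))$ you land on $\ell_+^{\Psi^*\beta}(\tPhi\cdot\tLambda_0,\tLambda_0)$ with $\tLambda_0=\tPsi^{-1}\cdot\tDelta$. Your lemma only swaps the contact form; it cannot move the Legendrian pair $(\tPhi\cdot\tLambda_0,\tLambda_0)$ back to $(\tPhi\cdot\tDelta,\tDelta)$, and no ``second change-of-form'' will do so either, since both slots carry the base point $\tLambda_0\ne\tDelta$. Closing this requires an additional deformation of the conjugator $\tpsi_s$ together with the observation that $NT\in\spec^\alpha(\tpsi_s\cdot\tphi\cdot\tpsi_s^{-1})$ if and only if $NT\in\spec^\alpha(\tphi)$ (because $\varphi_\alpha^{NT}=\id$), plus continuity and a final perturbation---which is exactly the paper's Lemma~\ref{lem:inv par conj}, proved directly from spectrality, monotonicity and continuity of $C_\pm^\alpha$ with no change of contact form at all. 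I would drop the change-of-form route for Property~2 and argue it directly this way.
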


\begin{exs}\
\begin{enumerate}[1.]
    \item Closed contact manifolds admitting a Liouville filling whose symplectic homology does not vanish are strongly orderable contact manifolds \cite[Theorem 1.18]{CCR}. Thus, the unitary cotangent of a closed manifold $X$ is strongly orderable since the Liouville filling consisting of the cotangent has non vanishing symplectic homology (see Example 1.5 \cite{arlove2024contactnonsqueezingvariousclosed}). If moreover $X$ can be endowed with a Riemaniann metric with a periodic geodesic flow, then its unitary cotangent admits a periodic Reeb flow.
    \item Closed contact manifolds that are hypertight, i.e. admitting a Reeb flow without contractible periodic orbits, are strongly orderable \cite[Theorem 1.19]{CCR}. Thus, the prequantization of a closed integral symplectic manifold which is symplectically aspherical is strongly orderable and admits a periodic Reeb flow (see Example 1.5 \cite{arlove2024contactnonsqueezingvariousclosed} for more details). 
\end{enumerate}
    
\end{exs}

\subsection{Contact big fiber Theorem}\label{sec:bft intro}

The selectors $C_\pm^\alpha$ defined in formula \eqref{eq:def du selecteur spectral} above, were already used by the author in \cite{arlove2024contactnonsqueezingvariousclosed} to prove non-squeezing type theorems for closed and strongly orderable prequantizations. Here, as in \cite{djordjević2025quantitativecontacthamiltoniandynamics,uljarević2025contactquasistatesapplications}, we use our selectors to show that it is impossible to displace at least one fiber of any map $f=(f_1,\cdots,f_N):M\to\R^N$, provided that the contact flows associated to its components all commute with each other and with the Reeb flow. More precisely, there exists a vector space isomorphism between contact vector fields $\chi(M,\xi)$, i.e. vector fields whose flows yield one-parameter subgroups of contactomorphisms, and smooth functions on $M$ given by \begin{equation}
   F_\alpha: \chi(M,\xi)\to C^\infty(M,\R)\quad \quad X\mapsto \alpha(X). 
\end{equation}
Note that $F_\alpha^{-1}(1)=R_\alpha$ is the Reeb vector field of $\alpha$ and we denote by $X^\alpha_h:=F_\alpha^{-1}(h)$ for all $h\in C^\infty(M,\R)$. More explicitely, for any $h\in C^\infty(M,\R)$, $X_h^\alpha$ is the only vector field satisfying $\ud \alpha(X_h^\alpha,\cdot)=\ud h(R_\alpha)\alpha-\ud h$ and $\alpha(X_h^\alpha)=h$. 



One can pull-back the Lie bracket of $\chi(M,\xi)$ to define a Lie bracket $\{\cdot,\cdot\}_\alpha$ on $C^\infty(M,\R)$, i.e. for all $h,g\in C^\infty(M,\R)$
\[\{h,g\}_\alpha:= F_\alpha([F_\alpha^{-1}(h),F_\alpha^{-1}(g)])=\alpha([X_h^\alpha,X_g^\alpha])=\ \ud g(X^\alpha_h)-\ud h(R_\alpha) \cdot g.\] For any $N\in\N_{>0}$, a smooth function $f:M\to \R^N, x\mapsto (f_1(x),\cdots,f_N(x))$ is said to be $\alpha$-contact involutive if $\{f_i,f_j\}_\alpha=\{f_i,1\}_\alpha=0$ for all $i,j\in[1,N]\cap\N$. 

\begin{thm}\label{thm:bft}
  Let $(M,\xi)$ be a closed strongly orderable contact manifold and $N\in\N_{>0}$. If a smooth function $f:M\to\R^N$ is $\alpha$-contact involutive for some $\alpha$ supporting the contact distribution $\xi$, there is $x\in\R^N$ such that $\varphi(f^{-1}\{x\})\cap f^{-1}\{x\}\ne\emptyset$ for all $\varphi\in\conto(M,\xi)$. 
\end{thm}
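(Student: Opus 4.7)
I would argue by contradiction, following the Entov--Polterovich/Borman quasi-state scheme adapted to our contact spectral selectors. Assume every fiber of $f$ is displaceable by some element of $\conto(M,\xi)$. Since $f(M)\subset\R^N$ is compact and displaceability is open, cover $f(M)$ by finitely many open balls $V_1,\ldots,V_n$ whose preimages $U_i:=f^{-1}(V_i)$ are each displaced by some $\psi_i\in\conto(M,\xi)$. Pick a smooth partition of unity $\{\rho_i\}$ subordinate to $\{V_i\}$ on a neighbourhood of $f(M)$ and set $h_i:=\rho_i\circ f$, so that $h_i\geq 0$, $\supp(h_i)\subset U_i$ and $\sum_i h_i\equiv 1$ on $M$. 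The $\alpha$-contact involutivity of $f$ then yields two structural facts: first, $\{f_a,1\}_\alpha=-\ud f_a(R_\alpha)=0$ makes $f$, hence each $h_i$, Reeb-invariant, so $X_{h_i}^\alpha$ commutes with $R_\alpha$ and $\varphi_{h_i}^t$ is an $\alpha$-strict contact isotopy; second, $\{f_a,f_b\}_\alpha=0$ together with the chain rule forces $\{h_i,h_j\}_\alpha=0$, so the flows $\varphi_{h_i}^t$ pairwise commute and the product identity $\widetilde{\varphi}_{\sum_j h_j}^{\,t}=\prod_j\widetilde{\varphi}_{h_j}^t$ holds already in $\tconto(M,\xi)$, the natural homotopy being provided by the induced $\R^n$-action.

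The identity $\sum_i kh_i\equiv k$ then rewrites, for every $k\in\N$, as
\[
\widetilde{\varphi}_\alpha^k \;=\; \widetilde{\varphi}_{k}^{\,1} \;=\; \widetilde{\varphi}_{\sum_i kh_i}^{\,1} \;=\; \prod_{i=1}^n \widetilde{\varphi}_{h_i}^k,
\]
where in the middle equalities $k$ denotes the constant function. Each factor $\widetilde{\varphi}_{h_i}^k$ is represented by a strict contact isotopy, so its conformal factor vanishes, $G_\alpha(\varphi_{h_i}^k)^*\beta_\alpha=\beta_\alpha$, and the $\ell_+$-term appearing in property 4 of Theorem \ref{thm:specselstrongord} collapses to $C_+^\alpha$. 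Iterating that property together with property 2 yields
\[
k \;=\; C_+^\alpha(\widetilde{\varphi}_\alpha^k) \;\leq\; \sum_{i=1}^n C_+^\alpha\bigl(\widetilde{\varphi}_{h_i}^k\bigr).
\]

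A contradiction then follows from the vanishing statement: for any Reeb-invariant $h$ whose support is displaceable, $C_+^\alpha(\widetilde{\varphi}_h^k)$ is bounded uniformly in $k\in\N$. Plugged into the inequality above, this forces $k\leq nK$ for all $k$, which is absurd. This vanishing lemma is the main obstacle, and is not a formal consequence of properties 1--4 of Theorem \ref{thm:specselstrongord}; I would establish it by returning to definition \eqref{eq:def du selecteur spectral} and to the Legendrian construction of $\ell_+^{\beta_\alpha}$ in \cite{allais2023spectral}. Specifically, a displacement $\psi$ of $\supp(h)$ lifts via $G_\alpha$ to a contactomorphism of $(G(M),\Xi_\alpha)$ moving $\widetilde{G_\alpha}(\widetilde{\varphi}_h^k)\cdot\tDelta$ to a Legendrian whose $M$-projection is disjoint from that of the original; this disjointness allows an interpolation between the two Legendrians that does not wind in the Reeb direction, producing a bound on $\ell_+^{\beta_\alpha}\bigl(\widetilde{G_\alpha}(\widetilde{\varphi}_h^k)\cdot\tDelta,\tDelta\bigr)$ that is independent of $k$.
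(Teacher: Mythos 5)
Your architecture coincides with the paper's: a pulled-back partition of unity, subadditivity of $C_+^\alpha$ along the commuting $\alpha$-strict flows $\varphi_{h_i}^t$, and a uniform-in-$k$ bound on $C_+^\alpha(\tphi_{h_i}^k)$ when $\supp(h_i)$ is displaceable. The paper merely packages the last two steps as a contact quasi-state $\zeta_\alpha(h)=\liminf_k C_\alpha(\tphi_h^k)/k$ and a quasi-measure $\tau_\alpha$; that is bookkeeping, not substance. The genuine issue is precisely the step you flag as "the main obstacle": your vanishing lemma is left as a sketch, and the sketch you propose is not an argument. Disjointness of the $M$-projections of two Legendrians in $G(M)$ does not by itself control $\ell_+^{\beta_\alpha}$; to make "an interpolation that does not wind in the Reeb direction" yield a $k$-independent bound you would have to reprove a displacement-energy estimate from scratch. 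Moreover, your claim that the lemma is not a formal consequence of properties 1--4 is wrong in spirit. The paper's route: if $\varphi$ displaces $\supp(h)$ and $R_\alpha\cdot h=0$, a pointwise argument (Lemma \ref{lem:spec}) gives $\spec^\alpha(\tphi_h^s\cdot\tphi)\subset\spec^\alpha(\tphi)$ for all $s$; continuity of $s\mapsto C_+^\alpha(\tphi_h^s\cdot\tphi)$, spectrality, and nowhere-density of $\spec^\alpha(\tphi)$ force this function to be constant, so $C_+^\alpha(\tphi_h^k\cdot\tphi)=C_+^\alpha(\tphi)$ for every $k$. Writing $\tphi_h^k=(\tphi_h^k\cdot\tphi)\cdot\tphi^{-1}$ and applying property 4 of Theorem \ref{thm:specselstrongord} in both directions sandwiches $C_+^\alpha(\tphi_h^k)$ between quantities independent of $k$, since the correction terms $\ell_+^{\Phi^*\beta_\alpha}(\tPhi^{-1}\cdot\tDelta,\tDelta)$ do not see $k$ (as $(\Phi_h^k)^*\beta_\alpha=\beta_\alpha$). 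You should replace your Legendrian-interpolation sketch by this spectrum-containment argument.

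Two smaller points. The identity $\tphi_{h+g}^{\,t}=\tphi_h^t\cdot\tphi_g^t$ needs Reeb-invariance of $h$ in addition to $\{h,g\}_\alpha=0$: the generator of $\varphi_h^s\circ\varphi_g^s$ is $h+(e^{f_s}g)\circ\varphi_h^s$ with $f_s$ the conformal factor of $\varphi_h^s$, and this collapses to $h+g$ only because $\{1,h\}_\alpha=0$. Likewise $\{h_i,h_j\}_\alpha=0$ is not a bare chain-rule consequence of $\{f_a,f_b\}_\alpha=0$, since $\{\cdot,\cdot\}_\alpha$ is not a biderivation; the paper verifies it in Darboux coordinates using $\ud h_i(R_\alpha)=0$ (Lemma \ref{lem:poisson bracket}). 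Both facts do hold in your setting, so these are gaps of justification rather than of truth. With the vanishing lemma established by the spectral route, your proof is correct and is essentially the paper's proof with the quasi-state scaffolding stripped away.
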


 The analogous contact big fiber theorem proved in \cite[Theorem 1.4]{djordjević2025quantitativecontacthamiltoniandynamics} applies to any closed contact manifold satisfying the following assumption: the contact manifold admits a strong symplectic filling by a weakly$^+$-monotone symplectic manifold and the unit in the filling symplectic homology is not eternal. We call assumption* the latter assumption. As we discuss in Examples \ref{ex:bft}, there exist examples of closed strongly orderable contact manifolds that satisfy assumption* (such as the Ustilovsky spheres), as well as examples of closed strongly orderable contact manifolds that do not. Therefore, the contact big fiber theorem stated above in Theorem \ref{thm:bft}, that we prove in Section \ref{sec:bft}, cannot be deduced from the contact big fiber theorem in \cite{djordjević2025quantitativecontacthamiltoniandynamics}. Conversely, while assumption* implies orderabability (\cite[Theorem 1.6]{djordjević2025quantitativecontacthamiltoniandynamics}), at the time of writing, we do not know if assumption* also implies strong orderability, and hence if our contact big fiber Theorem \ref{thm:bft} implies the contact big fiber proved in \cite[Theorem 1.4]{djordjević2025quantitativecontacthamiltoniandynamics}. \\


Other contact manifolds for which we establish a contact big fiber theorem are the standard contact lens spaces. More precisely, consider the standard Euclidean sphere $\sphere{2n+1}$ of radius $1$ centered at $0$ in $\C^{n+1}\equiv \R^{2n+2}$. Fix any integer $k\geq 2$ and any $(n+1)$-tuple $\underline{w}:=(w_0,\cdots,w_{n})$ of positive integers relatively prime to $k$. The lens space $L_k^{2n+1}(\underline{w})$ is the quotient of $\sphere{2n+1}$ by the free $\Z_k$-action defined as follows: $[j]\cdot (z_0,\cdots,z_n)=(e^{\frac{2i\pi j}{k}w_0}z_0,\cdots,e^{\frac{2i\pi j}{k}w_n}z_n)$. The standard contact structure $\xi_k^{\underline{w}}$ on $L_k^{2n+1}(\underline{w})$ is given by the kernel of the unique contact form $\alpha_k^{\underline{w}}$ whose pullback to $\sphere{2n+1}$ is equal to $i^*\left(\sum\limits_{i=0}^n x_i\ud y_i-y_i\ud x_i\right)$, where $i :\sphere{2n+1}\hookrightarrow \R^{2n+2}$ denotes the inclusion and $((x_0,y_0),\cdots,(x_n,y_n))$ the coordinate functions on $\R^{2n+2}$. Note that $\varphi_{\alpha_k^{\underline{w}}}^t([(z_0,\cdots,z_n)])=[(e^{it}z_0,\cdots,e^{it}z_n)]$, for all $t\in\R$, and thus the Reeb flow is periodic and we denote by $T_{\underline{w}}>0$ its minimal period. For simplicity, in the following $\alpha$ will stand for $\alpha_k^{\underline{w}}$. 




\begin{prop}[\cite{allais2024spectral,arlove2024contactnonsqueezingvariousclosed}]\label{prop:lens space}
There exists a continuous map $c_{\alpha} : \widetilde{\conto}(L_k^{2n+1}(\underline{w}),\xi_k^{\underline{w}})\to\R$ so that for any $\tphi,\tpsi\in\widetilde{\conto}(L_k^{2n+1}(\underline{w}),\xi_k^{\underline{w}})$:
\begin{enumerate}[1.]
\item $c_{\alpha}(\tphi)\in\spec^{\alpha}(\tphi)$
\item $c_{\alpha}(\widetilde{\id})=0$ and $c_{\alpha}(\tphi_\alpha^s\cdot\tphi)=s+c_{\alpha}(\tphi)$ for all $s\in\R$
\item if $\tphi\cleq\tpsi$ -- i.e. there exist contact isotopies $(\varphi_t),(\psi_t)$ such that $[(\varphi_t)]=\tphi$, $[(\psi_t)]=\tpsi$ and $\alpha\left(\frac{d}{dt}\varphi_t\right)\leq\alpha\left(\frac{d}{dt}\psi_t\right)$ -- then $c_{\alpha}(\tphi)\leq c_{\alpha}(\tpsi)$
\item $c_{\alpha}(\tphi\cdot\tpsi)\leq c_{\alpha}(\tphi)+\left\lceil  c_{\alpha}(\tpsi)\right\rceil_{T_{\underline{w}}} $
\item if there exists a contact isotopy $(\varphi_t)_{t\in[0,1]}\subset\conto(L_k^{2n+1},\xi_k)$ such that $\varphi_t^*\alpha=\alpha$ for all $t\in[0,1]$ and  $\tphi=[(\varphi_t)]\in\widetilde{\conto}(L_k^{2n+1}(\underline{w}),\xi_k^{\underline{w}})$ then $c_{\alpha}(\tphi\cdot\tpsi)\leq c_{\alpha}(\tphi)+c_{\alpha}(\tpsi)$. 
\end{enumerate}
\end{prop}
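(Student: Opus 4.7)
The plan is to build $c_\alpha$ from Givental's nonlinear Maslov index on the standard contact lens space, as done in \cite{allais2024spectral}, and to verify the five listed properties. Concretely, one has an integer-valued function $\mu : \widetilde{\conto}(L_k^{2n+1}(\underline{w}),\xi_k^{\underline{w}})\to\Z$, built by applying Givental's construction to equivariant generating functions for Legendrian lifts of graphs of contactomorphisms, and one sets
\[
c_\alpha(\tphi) \;:=\; \inf\bigl\{\,s\in\R \;\big|\; \mu(\tphi\cdot\tphi_\alpha^{-s})\leq N_0\,\bigr\},
\]
where $N_0$ is the reference integer making $c_\alpha(\widetilde{\id})=0$. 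Properties 1, 2, 3 and 5 can then be read off from properties of $\mu$ already established in \cite{allais2024spectral}: spectrality holds because $\mu$ is locally constant on the complement of the spectrum and jumps exactly at translated points; the Reeb shift formula is immediate from the definition, using that, by $T_{\underline{w}}$-periodicity of the Reeb flow, the element $\tphi_\alpha^{mT_{\underline{w}}}$ lies in $\ker\Pi$ and is therefore central in $\widetilde{\conto}(L_k^{2n+1}(\underline{w}),\xi_k^{\underline{w}})$; monotonicity follows from the monotonicity of $\mu$ along positive paths; and property 5 follows from the sub-additivity of $\mu$ once one uses that $\alpha$-strict isotopies commute with the Reeb flow, so that $\tphi\cdot\tphi_\alpha^{-s}=\tphi_\alpha^{-s}\cdot\tphi$.

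The genuinely new ingredient is property 4. The plan mirrors the argument behind property 1 of Theorem \ref{thm:thm2}: setting $m:=\lceil c_\alpha(\tpsi)/T_{\underline{w}}\rceil$, so that $\lceil c_\alpha(\tpsi)\rceil_{T_{\underline{w}}}=mT_{\underline{w}}$, decompose
\[
\tpsi\;=\;\tphi_\alpha^{mT_{\underline{w}}}\cdot\tpsi_0,\qquad \tpsi_0\;:=\;\tphi_\alpha^{-mT_{\underline{w}}}\cdot\tpsi.
\]
By property 2, $c_\alpha(\tpsi_0)=c_\alpha(\tpsi)-mT_{\underline{w}}\leq 0$. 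Because $\Pi(\tphi_\alpha^{mT_{\underline{w}}})=\id$, the element $\tphi_\alpha^{mT_{\underline{w}}}$ is central, so $\tphi\cdot\tpsi=\tphi_\alpha^{mT_{\underline{w}}}\cdot\tphi\cdot\tpsi_0$, and a second application of property 2 reduces the claim to proving $c_\alpha(\tphi\cdot\tpsi_0)\leq c_\alpha(\tphi)$ whenever $c_\alpha(\tpsi_0)\leq 0$.

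The main obstacle lies precisely in this last reduction: the hypothesis $c_\alpha(\tpsi_0)\leq 0$ is strictly weaker than $\tpsi_0\cleq\widetilde{\id}$, so monotonicity alone is not enough. I would descend back to the Maslov index: the hypothesis unpacks to $\mu(\tpsi_0\cdot\tphi_\alpha^{\epsilon})\leq N_0$ for every $\epsilon>0$, and one then applies the sub-additivity of $\mu$ to $\tphi\cdot\tpsi_0\cdot\tphi_\alpha^{-s-\epsilon}$ for $s$ slightly greater than $c_\alpha(\tphi)$. The delicate point is the non-commutativity between $\tphi$ and $\tphi_\alpha^{-s}$ when $s\notin T_{\underline{w}}\Z$; this is handled either by approximating $s$ from above by multiples of $T_{\underline{w}}$ (using the continuity of $c_\alpha$) or, alternatively, by proving the inequality first for $s\in T_{\underline{w}}\Z$ and extending by density. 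In either approach, the $T_{\underline{w}}$-periodicity of the Reeb flow plays an essential role, which is why property 4 here is sharper than the triangle inequality of property 4 in Theorem \ref{thm:specselstrongord}.
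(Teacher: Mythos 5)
First, a framing remark: the paper does not prove this proposition at all — it is imported verbatim from the references, with properties 1--4 attributed to \cite[Theorem 1.1]{allais2024spectral} and property 5 to \cite[Proposition 3.3]{arlove2024contactnonsqueezingvariousclosed}. Your overall strategy (defining $c_\alpha$ as a threshold value of Givental's nonlinear Maslov index $\mu$) is indeed the strategy of those references, your reading of properties 1--3 is fine, and your reduction of property 4 --- via $m:=\lceil c_\alpha(\tpsi)/T_{\underline{w}}\rceil$, the centrality of $\tphi_\alpha^{mT_{\underline{w}}}\in\ker\Pi$, and property 2 --- to the claim ``$c_\alpha(\tpsi_0)\le 0$ implies $c_\alpha(\tphi\cdot\tpsi_0)\le c_\alpha(\tphi)$'' is correct.

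The genuine gap is in how you close that last claim. You correctly isolate the obstacle (non-commutativity of $\tphi$ with $\tphi_\alpha^{-s}$ for $s\notin T_{\underline{w}}\Z$), but both proposed resolutions are vacuous: $T_{\underline{w}}\Z$ is discrete, so a value of $s$ just above $c_\alpha(\tphi)$ cannot be ``approximated from above by multiples of $T_{\underline{w}}$'' (the nearest multiple may be almost $T_{\underline{w}}$ away, which would only give the weaker bound $c_\alpha(\tphi\cdot\tpsi)\le\lceil c_\alpha(\tphi)\rceil_{T_{\underline{w}}}+\lceil c_\alpha(\tpsi)\rceil_{T_{\underline{w}}}$), and there is no ``density'' of $T_{\underline{w}}\Z$ in $\R$ to extend from. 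The tool actually needed is invariance of $\mu$ under conjugation by elements generated by $\alpha$-strict isotopies: writing $\tphi\cdot\tpsi_0\cdot\tphi_\alpha^{-s-\epsilon}=(\tphi\cdot\tphi_\alpha^{-s})\cdot\bigl(\tphi_\alpha^{s}\cdot(\tpsi_0\cdot\tphi_\alpha^{-\epsilon})\cdot\tphi_\alpha^{-s}\bigr)$ and choosing $\epsilon>0$ outside the (nowhere dense) spectrum, the map $t\mapsto\mu\bigl(\tphi_\alpha^{ts}\cdot(\tpsi_0\cdot\tphi_\alpha^{-\epsilon})\cdot\tphi_\alpha^{-ts}\bigr)$ is constant because conjugation by the Reeb flow preserves the discriminant condition --- this is exactly the continuity-plus-nowhere-density argument used for property 7 of Theorem \ref{thm:specselstrongord} and in Lemma \ref{lem:inv par conj}. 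The same criticism applies to your one-line treatment of property 5: the identity $\tphi\cdot\tphi_\alpha^{-s}=\tphi_\alpha^{-s}\cdot\tphi$ does not untangle $\tphi\cdot\tpsi\cdot\tphi_\alpha^{-s-t}$ into $(\tphi\cdot\tphi_\alpha^{-s})\cdot(\tpsi\cdot\tphi_\alpha^{-t})$, since $\tphi_\alpha^{-s}$ need not commute with $\tpsi$; one must again conjugate $\tpsi\cdot\tphi_\alpha^{-t}$ by a strict element and run the spectrum argument, which is precisely why this property required the separate proof of \cite[Proposition 3.3]{arlove2024contactnonsqueezingvariousclosed} rather than being ``read off''. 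Finally, watch the normalization: from $\mu(\tilde a)\le N_0$ and $\mu(\tilde b)\le N_0$, subadditivity only yields $\mu(\tilde a\cdot\tilde b)\le 2N_0$, so your threshold must be $N_0=0$ (or one must work with the $\Z$-indexed family of selectors of \cite{allais2024spectral} and their index-additive triangle inequality, which is how the reference actually obtains property 4).
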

The first four properties of the above Proposition are contained in  \cite[Theorem 1.1]{allais2024spectral} and the last one is proved in \cite[Proposition 3.3]{arlove2024contactnonsqueezingvariousclosed}. We deduce the following. 

\begin{thm}\label{thm:bft in lens space}
    Let $N\in\N_{>0}$. If a smooth function $f: L_k^{2n+1}(\underline{w})\to\R^N$ is $\alpha$-contact involutive, there is $x\in\R^N$ such that $\varphi(f^{-1}\{x\})\cap f^{-1}\{x\}\ne\emptyset$ for all $\varphi\in\conto(L_k^{2n+1}(\underline{w}),\xi_k^{\underline{w}})$ 
\end{thm}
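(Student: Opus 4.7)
The plan is to follow the proof of Theorem~\ref{thm:bft} essentially verbatim, replacing the selectors $C_\pm^\alpha$ of Theorem~\ref{thm:specselstrongord} by the selector $c_\alpha$ of Proposition~\ref{prop:lens space}. The properties of $c_\alpha$ listed in Proposition~\ref{prop:lens space} parallel those of $C_+^\alpha$ closely enough for the contact big fiber argument to carry over: $c_\alpha$ is a continuous $\cleq$-monotone selector in $\spec^\alpha$, equivariant with respect to the Reeb shift, subadditive under composition up to the ceiling $\lceil\cdot\rceil_{T_{\underline w}}$ in general, and literally subadditive when the first factor is generated by an $\alpha$-strict Hamiltonian. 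The ceiling plays the same role as in Theorem~\ref{thm:thm2} and reflects the periodicity of the Reeb flow on $L_k^{2n+1}(\underline w)$.

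I would proceed by contradiction. Suppose that for every $x\in f(L_k^{2n+1}(\underline w))$ there exists $\varphi_x\in\conto(L_k^{2n+1}(\underline w),\xi_k^{\underline w})$ displacing $f^{-1}\{x\}$. Openness of displaceability and compactness of $f(L_k^{2n+1}(\underline w))$ produce a finite open cover $\{U_j\}_{j=1}^r$ of the image together with contactomorphisms $\varphi_j\in\conto(L_k^{2n+1}(\underline w),\xi_k^{\underline w})$ displacing each $f^{-1}(U_j)$. Fixing a partition of unity $\{\rho_j\}$ of $\R^N$ subordinate to $\{U_j\}$, the functions $h_j:=\rho_j\circ f$ inherit from $\alpha$-contact involutivity of $f$ the relations $\{h_i,h_l\}_\alpha=\{h_j,1\}_\alpha=0$. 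Consequently each flow $\phi^{h_j}_t$ is $\alpha$-strict, the flows pairwise commute and commute with the Reeb flow, and $\sum_j h_j\equiv 1$ together with this commutativity yields
\[\tphi_\alpha^t=\widetilde{\phi^{h_1}_t}\cdot\widetilde{\phi^{h_2}_t}\cdots\widetilde{\phi^{h_r}_t}\quad\text{in}\quad\widetilde{\conto}(L_k^{2n+1}(\underline w),\xi_k^{\underline w}).\]

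Iterating property 5 of Proposition~\ref{prop:lens space} along this factorisation -- each prefix $\widetilde{\phi^{h_1}_t}\cdots\widetilde{\phi^{h_j}_t}$ is again generated by an $\alpha$-strict Hamiltonian, so property 5 applies at every step -- and invoking property 2 gives the core inequality
\[t=c_\alpha(\tphi_\alpha^t)\leq\sum_{j=1}^r c_\alpha(\widetilde{\phi^{h_j}_t})\quad\text{for all }t\geq 0.\]
The proof then reduces to a displacement-vanishing lemma: for any $\alpha$-strict $h\in C^\infty(L_k^{2n+1}(\underline w),\R)$ whose support is displaced by some element of $\conto(L_k^{2n+1}(\underline w),\xi_k^{\underline w})$, the family $\{c_\alpha(\widetilde{\phi^h_t})\}_{t\geq 0}$ is uniformly bounded. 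I would establish it via the standard commuting-conjugates trick: from disjoint-support commutativity of $\phi^h_t$ and $\varphi\phi^h_t\varphi^{-1}$ one extracts from properties 4 and 5 together with the Reeb shift two conflicting linear-growth estimates, forcing boundedness.

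Granting the lemma, the displayed inequality becomes $t\leq\sum_{j=1}^r O(1)$ uniformly in $t\geq 0$, which is a contradiction for $t$ sufficiently large. The main technical obstacle I expect is the vanishing lemma: one must lift the commuting-conjugates argument cleanly to $\widetilde{\conto}(L_k^{2n+1}(\underline w),\xi_k^{\underline w})$, manage the fact that conjugation by a non-strict $\varphi$ need not preserve $\alpha$-strictness (so that property 5 is not directly applicable to the conjugate), and arrange for the ceiling in property 4 of Proposition~\ref{prop:lens space} not to accumulate a Reeb period per application.
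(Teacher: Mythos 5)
Your overall architecture coincides with the paper's, up to packaging: the paper first converts $c_\alpha$ into a contact quasi-state $\zeta_\alpha(h)=\liminf_k c_\alpha(\tphi_h^k)/k$ and then a quasi-measure $\tau_\alpha$ on subsets, and your direct argument (contradiction, finite cover by displaceable preimages, partition of unity, iterated use of property 5 of Proposition \ref{prop:lens space} along the factorization of the Reeb flow, reduction to a vanishing statement on displaceably supported strict Hamiltonians) is exactly what those objects encode. Your subadditivity step is correct: each partial product $\widetilde{\varphi_h^{t}}$-type prefix is $\alpha$-strict, so property 5 does iterate, and $t=c_\alpha(\tphi_\alpha^t)\leq\sum_j c_\alpha(\widetilde{\varphi_{h_j}^t})$ holds.

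The gap is in your proof of the vanishing lemma. The commuting-conjugates trick requires some control of $c_\alpha(\tpsi\cdot\tphi\cdot\tpsi^{-1})$ in terms of $c_\alpha(\tphi)$, and Proposition \ref{prop:lens space} grants no conjugation invariance whatsoever; moreover, as you note yourself, $\varphi\circ\varphi_h^t\circ\varphi^{-1}$ is not $\alpha$-strict for a general displacing $\varphi$, so property 5 cannot be applied to the conjugate. As described, this step does not close. The paper's mechanism avoids conjugation entirely: if $\varphi$ displaces $\mathrm{Supp}(h)$ and $R_\alpha\cdot h=0$, a pointwise argument (Lemma \ref{lem:spec}) shows $\spec^\alpha(\tphi_h^s\cdot\tphi)\subset\spec^\alpha(\tphi)$ for every $s$; since $s\mapsto c_\alpha(\tphi_h^s\cdot\tphi)$ is continuous, spectral, and takes values in the nowhere dense set $\spec^\alpha(\tphi)$, it is constant, whence $c_\alpha(\tphi_h^s\cdot\tphi)=c_\alpha(\tphi)$ for all $s$. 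Writing $\tphi_h^s=(\tphi_h^s\cdot\tphi)\cdot\tphi^{-1}$ and applying property 4 once forward and once in reversed form yields $c_\alpha(\tphi)-\lceil c_\alpha(\tphi)\rceil_{T_{\underline{w}}}\leq c_\alpha(\tphi_h^s)\leq c_\alpha(\tphi)+\lceil c_\alpha(\tphi^{-1})\rceil_{T_{\underline{w}}}$, a two-sided bound independent of $s$, so the ceiling is applied only twice and no Reeb periods accumulate. Substituting this argument for your commuting-conjugates step completes the proof.
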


At the time of writing, it is unclear to the author whether lens spaces are strongly orderable or satisfy assumption* in general (see Remark 1.4 in \cite{arlove2024contactnonsqueezingvariousclosed}), and thus if one can deduce Theorem \ref{thm:bft in lens space} from Theorem \ref{thm:bft} or from \cite[Theorem 1.4]{djordjević2025quantitativecontacthamiltoniandynamics}. In any case, while the existing proofs showing that a contact manifold is strongly orderable or satisfies assumption* rely on symplectic homology techniques, the spectral selectors, described in Proposition \ref{prop:lens space}, used to prove Theorem \ref{thm:bft in lens space}, are constructed via completely independent methods, namely generating functions and Givental’s nonlinear Maslov index. On the other hand,  Borman and Zapolsky proved in \cite[Corollary 1.12 and Lemma 1.22]{borman2015quasimorphisms} a result similar to Theorem \ref{thm:bft in lens space}  for the case of the real projective space ($k=2$ and $\underline{w}=(a,\cdots,a)$). Moreover, thanks to \cite[Theorem 1.17]{borman2015quasimorphisms} and the work of Granja-Karshon-Pabiniak-Sandon \cite{GKPS}, their result can be generalized to any lens space with equal weights $(k\geq 2$ and $\underline{w}=(a,\cdots,a)$). It is also interesting to note that, in contrast with these results, Marinković and Pabiniak \cite[Theorem 1.6] {marinkovic2016displaceability} proved that a generic fiber of the natural contact involutive map $f : L_k^{2n+1}(\underline{w})\to \R^{n+1}$, defined in the Example \ref{ex:bft} below, is displaceable by a contactomorphism isotopic to the identity for any $n$-tuple  $\underline{w}$.

\begin{exs}\label{ex:bft}\
\begin{enumerate}[1.]
 \item Since Ustilovsky spheres \cite{Ustilovsky1999} are contact manifolds admitting a Liouville fillings whose symplectic homology do not vanish \cite{KwonKoert2016}, they satisfy assumption* and are strongly orderable.  Thus one can apply Theorem \ref{thm:bft} to recover the contact big fiber theorem of \cite{djordjević2025quantitativecontacthamiltoniandynamics} and the non-squeezing results of \cite{Uljarevi__2023} in this case. In particular, it allows to find subsets that are displaceable by diffeomorphism isotopic to the identity but not by contactomorphism isotopic to the identity. See Section 2 in \cite{sun2025contactbigfibertheorems} for more details. 
 \item In \cite[Theorem A]{MassotNiederkrugerWendl2013}, Massot-Niederkruger-Wendl construct in all odd dimensions closed contact manifolds that are hypertight but that do not admit strong symplectic fillings. Thus, these contact manifolds are strongly orderable (Theorem 1.19 \cite{CCR}) but they do not satisfy assumption*. Therefore, these contact manifolds are examples where one can deduce a big contact fiber theorem using the above Theorem \ref{thm:bft} but not by using the techniques of \cite{djordjević2025quantitativecontacthamiltoniandynamics}. However, in this setting, the contact big fiber theorem can often be deduced by other methods. Indeed, given an hypertight contact manifold $(M,\xi)$ of dimension $2n+1$ which is toric, i.e. it admits an effective action of $\T^{n+1}$ that preserves $\xi$, and denoting by $f: M\to \R^{n+1}$ the $\alpha$-contact involutive map corresponding to the the moment map, where $\alpha$ is a $\T^{n+1}$-invariant contact form supporting $\xi$ (see \cite[Section 2]{marinkovic2016displaceability} for more details), one can use Theorem 2.3 in \cite{massot2016examples} (see also Proposition 1.5 in \cite{marinkovic2016displaceability}) to deduce that a generic fiber of $f$ cannot be displaced by any contactomorphism isotopic to the identity.  
 \item For any $j\in[0,n]\cap \N$ the contact isotopy $(\varphi_t^j)_{t\in[0,1]}\subset\conto(\sphere{2n+1},\ker\alpha_1)\subset \C^{n+1}$ defined by $\varphi_t^j(z_0,\cdots,z_n):=(z_0,\cdots, e^{it}z_j,\cdots,z_n)$ descends to a contact isotopy, that we still denote by $(\varphi_t^j)$, on $(L_k^{2n+1}(\underline{w}),\xi_k^{\underline{w}})$. A direct computation shows that $\frac{d}{dt}\varphi_t^j=X_j(\varphi_t^j)$ for all $t\in[0,1]$ where $\alpha(X_j):=f_j :L_k^{2n+1}(\underline{w})\to\R, [(z_0,\cdots,z_n)]\mapsto |z_j|^2$. Moreover $\{f_i,f_j\}_{\alpha_k}=\{1,f_i\}_{\alpha_k}$, for all $i,j\in[0,n]\cap\N$, since $\varphi_t^j\circ\varphi_s^i=\varphi_s^i\circ\varphi_t^j$ and $\varphi_t^j\circ\varphi_{\alpha_k}^s=\varphi_{\alpha_k}^s\circ\varphi_t^j$ for all $t,s\in\R$. The map $f:=(f_0,\cdots,f_n) : L_k^{2n+1}(\underline{w})\to \R^{n+1}$ is $\alpha$-contact involutive and thus has a fiber which is not displaceable by any contactomorphism isotopic to the identity by Theorem \ref{thm:bft in lens space}. Note that the fibers of $f$ are all displaceable by diffeomorphisms that are isotopic to the identity.
 
\end{enumerate}
\end{exs}

\subsection{Applications to the geometry of $\widetilde{\conto}(M,\xi)$}\label{se:geometry}
In \cite{allais2024spectral} Allais, Sandon and the author used the spectral selectors they constructed on lens spaces discussed in Proposition \ref{prop:lens space} above to deduce many results about the geometry of the contactomorphism group universal cover. We present here similar results for closed strongly orderable contact manifolds $(M,\xi)$ admitting a periodic Reeb flow.

\subsubsection{A spectral norm} 

\begin{cor}\label{cor:1}
    If $(M,\xi)$ is a closed strongly orderable contact manifold and $\alpha$ is a contact form supporting $\xi$ whose Reeb flow is $T$-periodic, for some $T>0$, the map \[\mu_{\mathrm{spec}}^\alpha : \widetilde{\conto}(M,\xi)\to\R \quad \quad \tphi\mapsto \max\left\{\left\lceil C_+^\alpha(\tphi)\right\rceil_T,-\left\lfloor C_-^\alpha(\tphi)\right\rfloor_T\right\}\]
    is a stably unbounded conjugation invariant pseudo-norm.
\end{cor}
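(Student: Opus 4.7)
The plan is to verify, in turn, the axioms of a conjugation-invariant pseudo-norm—non-negativity together with $\mu_{\mathrm{spec}}^\alpha(\widetilde{\id})=0$, subadditivity, symmetry under inversion, and conjugation invariance—and then to exhibit an element whose iterates grow linearly in $\mu_{\mathrm{spec}}^\alpha$, which gives stable unboundedness. Each of these verifications will follow essentially tautologically from the algebraic properties of $C_\pm^\alpha$ collected in Theorem~\ref{thm:specselstrongord} and Theorem~\ref{thm:thm2}, combined with the elementary arithmetic identities $\lceil x+kT\rceil_T=\lceil x\rceil_T+kT$ and $\lfloor x+kT\rfloor_T=\lfloor x\rfloor_T+kT$ for every $k\in\Z$, together with the monotonicity of $\lceil\cdot\rceil_T$ and $\lfloor\cdot\rfloor_T$.

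For non-negativity I would use $C_-^\alpha\leq C_+^\alpha$ and a case split: if $C_+^\alpha(\tphi)\geq 0$ then $\lceil C_+^\alpha(\tphi)\rceil_T\geq 0$, whereas if $C_+^\alpha(\tphi)<0$ then also $C_-^\alpha(\tphi)<0$, giving $-\lfloor C_-^\alpha(\tphi)\rfloor_T\geq T$; the vanishing at $\widetilde{\id}$ is immediate from property~2 of Theorem~\ref{thm:specselstrongord}. For the triangle inequality, property~1 of Theorem~\ref{thm:thm2} combined with monotonicity of the ceiling yields
\[
\lceil C_+^\alpha(\tphi\cdot\tpsi)\rceil_T\leq \lceil C_+^\alpha(\tphi)+\lceil C_+^\alpha(\tpsi)\rceil_T\rceil_T=\lceil C_+^\alpha(\tphi)\rceil_T+\lceil C_+^\alpha(\tpsi)\rceil_T\leq \mu_{\mathrm{spec}}^\alpha(\tphi)+\mu_{\mathrm{spec}}^\alpha(\tpsi),
\]
the second equality holding because $\lceil C_+^\alpha(\tpsi)\rceil_T\in T\Z$; an entirely analogous bound controls $-\lfloor C_-^\alpha(\tphi\cdot\tpsi)\rfloor_T$, and taking the maximum yields subadditivity. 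The symmetry $\mu_{\mathrm{spec}}^\alpha(\tphi)=\mu_{\mathrm{spec}}^\alpha(\tphi^{-1})$ follows from applying property~3 of Theorem~\ref{thm:thm2} to both $\tphi$ and $\tphi^{-1}$, which swaps the two arguments of the maximum; conjugation invariance is immediate from property~2 of Theorem~\ref{thm:thm2}.

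For stable unboundedness, the natural candidate is the Reeb lift $\tphi_\alpha^T\in\widetilde{\conto}(M,\xi)$. Because $s\mapsto\tphi_\alpha^s$ is a one-parameter subgroup, one has $(\tphi_\alpha^T)^n=\tphi_\alpha^{nT}$ (the $n$-fold concatenation of the Reeb loop being homotopic rel endpoints to the reparametrised path), and property~2 of Theorem~\ref{thm:specselstrongord} gives $C_\pm^\alpha((\tphi_\alpha^T)^n)=nT$; hence $\mu_{\mathrm{spec}}^\alpha((\tphi_\alpha^T)^n)=|n|T$, which grows linearly in $n$ and produces a positive stable norm. I do not anticipate any genuine obstacle: the whole argument is a direct bookkeeping in the ceiling/floor calculus, and the only mildly subtle point is recognising that $\lceil C_+^\alpha(\tpsi)\rceil_T$ already lies in $T\Z$, which is precisely what allows the outer ceiling in the subadditivity estimate to be distributed over the sum without any loss.
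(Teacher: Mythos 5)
Your proposal is correct and follows essentially the same route as the paper: non-negativity from $C_-^\alpha\leq C_+^\alpha$, vanishing at the identity and the stable-unboundedness computation $C_\pm^\alpha(\tphi_\alpha^{kT})=kT$ from Theorem~\ref{thm:specselstrongord}, and symmetry, subadditivity, and conjugation invariance from the three properties of Theorem~\ref{thm:thm2} (Lemmas~\ref{lem:poincare}, \ref{lem:inegalite triangulaire}, \ref{lem:inv par conj}), with your distribution of the outer ceiling over the sum being exactly the step the paper packages into Lemma~\ref{lem:inegalite triangulaire}.
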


As remarked in \cite{BIP} any conjugation invariant pseudo-norm on a group can be turned into a genuine conjugation norm by adding $1$ to the norm of all elements but the identity.\\

Note also that, when $(M,\xi)$ is strongly orderable and $\alpha$ is a contact form whose Reeb flow is $T$-periodic, the map $\mu_*^\alpha : \conto(M,\xi)\to\R$, $\varphi\mapsto \inf\{\mu_{\mathrm{spec}}^\alpha(\tphi)\ |\ \Pi(\tphi)=\varphi\}$ gives rise to a pseudo-norm on $\conto(M,\xi)$. While it is known that the group of diffeomorphisms isotopic to the identity of a closed manifold is \textit{bounded} \cite{BIP,Tsuboi}, i.e. it does not admit any unbounded conjugation invariant norm, it is still an open question whether a similar statement holds for $\conto(M,\xi)$ of any closed contact manifold $(M,\xi)$.  The author together with Allout are working to show that $\mu_*^\alpha$ is actually \textit{unbounded} when the contact manifold $M$ is a prequantization over a surface with genus. 


\subsubsection{Geodesics of the discriminant and oscillation norms}
Let us recall the definition of the oscillation and discriminant norms introduced by Colin-Sandon in \cite{discriminante}.

For any $a<b\in\R$, a contact isotopy $(\varphi_t)_{t\in[a,b]}\subset\conto(M,\xi)$ is embedded if $0\notin\spec^\alpha(\varphi_t\circ\varphi_s^{-1})$ for all $s\ne t\in[a,b]$, for some, and thus any, contact form $\alpha$ supporting $\xi$.  The discriminant norm $\nu_{\mathrm{disc}} : \widetilde{\conto}(M,\xi)\to\N$
is the word metric associated to the set $\mathcal{E}:=\{[(\varphi_t)]\in\widetilde{\conto}(M,\xi)\ |\ (\varphi_t)\text{ is  embedded and }\varphi_0=\id\}$. The discriminant length $\mathcal{L}_\mathrm{disc}$ of a contact isotopy $(\varphi_t)_{t\in[0,1]}$ is the smallest $k\in\N$ such that the isotopies $(\varphi_t)_{t\in[t_i,t_{i+1}]}$, for all $i\in[1,k-1]\cap\N$ where $0=t_1<\cdots <t_k=1$, are embedded. Moreover, by conventions the discriminant length is $0$ for the constant isotopy and $+\infty$ if there are no such $k\in\N$. Therefore $\nu_\mathrm{disc}(\tphi)=\inf\{\mathcal{L}_\mathrm{disc}(\varphi_t)\ |\ [(\varphi_t)]=\tphi\in\widetilde{\conto}(M,\xi)\}$.

\begin{cor}\label{cor:2}
    The discriminant norm is unbounded on $\widetilde{\conto}(M,\xi)$ when $(M,\xi)$ is a strongly orderable contact manifold admitting a contact form whose Reeb flow is periodic. Moreover, if all the Reeb orbits have the same minimal period then the Reeb flow is a geodesic, i.e. $\nu_{\mathrm{disc}}(\tphi_\alpha^s)=\mathcal{L}_{\mathrm{disc}}((\varphi_\alpha^{ts})_{t\in[0,1]})$ for all $s\in\R$.  
\end{cor}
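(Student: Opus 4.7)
The plan is to extract both assertions from one technical lemma: \emph{for every $\tphi\in\mathcal{E}$, we have $C_\pm^\alpha(\tphi)\in(-T,T)\setminus\{0\}$}. Granting this, the unboundedness of $\nu_\mathrm{disc}$ follows from Corollary \ref{cor:1}, and the geodesic property follows from a direct comparison between $\nu_\mathrm{disc}(\tphi_\alpha^s)$ and an explicit subdivision of the Reeb isotopy.

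To prove the lemma, fix an embedded $(\varphi_t)_{t\in[0,1]}$ representing $\tphi$ with $\varphi_0=\id$, and consider the continuous function $g:[0,1]\to\R$ defined by $g(t):=C_+^\alpha([(\varphi_u)_{u\in[0,t]}])$. By Theorem \ref{thm:specselstrongord}(2), $g(0)=0$, and by item (1) of the same theorem, $g(t)\in\spec^\alpha(\varphi_t)$ for every $t$. The key observation is that if $g(t_0)=kT$ for some $t_0>0$ and $k\in\Z$, then $T$-periodicity of the Reeb flow gives $\varphi_\alpha^{kT}=\id$, so the witnessing translated point of $\varphi_{t_0}$ is a genuine fixed point, forcing $0\in\spec^\alpha(\varphi_{t_0}\circ\varphi_0^{-1})$ and contradicting the embedded condition. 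Hence $g$ avoids $T\Z$ on $(0,1]$; continuity together with $g(0)=0$ then confines $g([0,1])$ to $(-T,T)$ by the intermediate value theorem, and $g(1)\neq 0$ by the same avoidance. The identical argument applied to $C_-^\alpha$ yields the symmetric bound.

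The lemma immediately gives $\mu_{\mathrm{spec}}^\alpha(\tphi)\leq T$ for every $\tphi\in\mathcal{E}$, since both $\lceil C_+^\alpha(\tphi)\rceil_T$ and $-\lfloor C_-^\alpha(\tphi)\rfloor_T$ then lie in $\{0,T\}$. Subadditivity of $\mu_{\mathrm{spec}}^\alpha$ (Corollary \ref{cor:1}) thus yields $\mu_{\mathrm{spec}}^\alpha\leq T\cdot\nu_\mathrm{disc}$, and since $\mu_{\mathrm{spec}}^\alpha$ is (stably) unbounded so is $\nu_\mathrm{disc}$.

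Assume now that all Reeb orbits have minimal period exactly $T$, and set $M:=\lfloor|s|/T\rfloor+1$ for $s\neq 0$. Subdividing $[0,1]$ into $M$ equal subintervals $[t_i,t_{i+1}]$, for $t\neq t'$ in any such subinterval we have $|(t-t')s|\leq|s|/M<T$; the minimal period assumption ensures $\varphi_\alpha^{(t-t')s}$ has no fixed point, hence no $0$-translated point, so each sub-isotopy is embedded and $\mathcal{L}_\mathrm{disc}((\varphi_\alpha^{ts}))\leq M$. For the matching lower bound on $\nu_\mathrm{disc}$, suppose $\tphi_\alpha^s=\tphi_1\cdots\tphi_m$ with each $\tphi_i\in\mathcal{E}$. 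Iterating property (1) of Theorem \ref{thm:thm2}, and using the \emph{strict} bound $C_+^\alpha(\tphi_1)<T$ supplied by the lemma together with $\lceil C_+^\alpha(\tphi_i)\rceil_T\leq T$ for $i\geq 2$, we obtain
\[
s=C_+^\alpha(\tphi_\alpha^s)\leq C_+^\alpha(\tphi_1)+\sum_{i=2}^m\lceil C_+^\alpha(\tphi_i)\rceil_T<mT,
\]
whence $m>s/T$ and therefore $m\geq M$ when $s>0$; the symmetric argument using $C_-^\alpha(\tphi_\alpha^s)=s$ together with $\lfloor C_-^\alpha(\tphi_i)\rfloor_T\geq -T$ handles $s<0$. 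Combined with $\nu_\mathrm{disc}\leq\mathcal{L}_\mathrm{disc}$, this establishes the claimed equality. The main obstacle is the lemma — the only step where the embedded condition and the Reeb periodicity are genuinely intertwined; once it is in hand, the rest consists of algebraic manipulations of the properties of $C_\pm^\alpha$ already established in Theorems \ref{thm:specselstrongord} and \ref{thm:thm2}.
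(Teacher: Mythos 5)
Your proposal is correct and follows essentially the same route as the paper: the key lemma that $C_+^\alpha<T$ on $\mathcal{E}$ (you prove the slightly stronger two-sided, nonzero version via the same continuity/spectrum-avoidance argument), the iterated triangle inequality from Theorem \ref{thm:thm2} giving $C_+^\alpha(\tphi)<T\,\nu_{\mathrm{disc}}(\tphi)$, and the explicit subdivision of the Reeb isotopy into $\lfloor |s|/T\rfloor+1$ embedded pieces for the upper bound on the discriminant length. The only cosmetic differences are that you route unboundedness through $\mu_{\mathrm{spec}}^\alpha$ and Corollary \ref{cor:1} rather than applying the bound directly to $\tphi_\alpha^{TN}$, and that you treat $s<0$ explicitly via $C_-^\alpha$.
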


A contact isotopy $(\varphi_t)\subset\conto(M,\xi)$ is monotone if $\alpha\left(\frac{d}{dt}\varphi_t\right)\ne 0$ and is positive if $\alpha\left(\frac{d}{dt}\varphi_t\right)>0$. 
One defines $\nu_{\mathrm{osc}}^+ : \widetilde{\conto}(M,\xi)\to \N$ by \[\nu_{\mathrm{osc}}^+(\tphi)=\min\left\{ k\in\N\ \left|\
        \parbox{10cm}{there exist $N\in\N$ embedded monotone isotopies $(\varphi_t^1),\ldots,(\varphi_t^N)$ starting at the identity,
            $k$ of which are positive,
    and $[(\varphi_t^1\circ\cdots\circ\varphi_t^N)]=\tphi$}\right.\right\}\] and the oscillation norm $\nu_{\mathrm{osc}} :\uGcont\to\N$ to be $\nu_{\mathrm{osc}}(\tphi):=\nu_{\mathrm{osc}}^+(\tphi)+\nu^+_{\mathrm{osc}}(\tphi^{-1})$. The positive oscillation length $\mathcal{L}^+_{\mathrm{osc}}$ of a contact isotopy $(\varphi_t)_{t\in[0,1]}$ is the smallest $k\in\N$ such that there exist $N\in\N$ and $0=t_1<\cdots<t_N=1$ satisfying the following: the contact isotopies $(\varphi_t)_{t\in(t_i,t_{i+1})}$,  for all $i\in[1,N-1]\cap \N$, are embedded and monotone, moreover exactly $k$ of them are positive. Finally, the oscillation length $\mathcal{L}_{\mathrm{osc}}$ of a contact isotopy $(\varphi_t)$ is $\mathcal{L}_{\mathrm{osc}}^+(\varphi_t)+\mathcal{L}^+_{\mathrm{osc}}(\varphi_{1-t})$. Thus, for any contact isotopy $(\varphi_t)\subset\conto(M,\xi)$ starting at the identity we have $\nu_\mathrm{osc}(\tphi)\leq\mathcal{L}_\mathrm{osc}(\varphi_t)$, where $\tphi:=[(\varphi_t)]\in\widetilde{\conto}(M,\xi)$.  


\begin{cor}\label{cor:3}
       The oscillation norm is unbounded on $\widetilde{\conto}(M,\xi)$ when $(M,\xi)$ is a strongly orderable contact maniold admitting a contact form whose Reeb flow is periodic. Moreover, if all the Reeb orbits have the same minimal period then the Reeb flow is a geodesic, i.e. $\nu_{\mathrm{osc}}(\tphi_\alpha^s)=\mathcal{L}_{\mathrm{osc}}((\varphi_\alpha^{ts})_{t\in[0,1]})$ for all $s\in\R$.
\end{cor}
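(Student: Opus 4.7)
The proof adapts the strategy of Corollary \ref{cor:2}, now exploiting the finer $\left\lceil\cdot\right\rceil_T$-subadditivity of item 1 of Theorem \ref{thm:thm2}. The core claim is the following product bound: if $\tphi \in \tconto(M,\xi)$ can be written as $\tphi = \tphi^1\cdots\tphi^N$, where each $\tphi^i$ is the class of an embedded monotone isotopy starting at the identity and exactly $k$ of these isotopies are positive, then $C_+^\alpha(\tphi) \leq kT$. Granted this, item 2 of Theorem \ref{thm:specselstrongord} (applied to $\tphi = \widetilde{\id}$) gives $C_+^\alpha(\tphi_\alpha^s) = s$, so any such decomposition of $\tphi_\alpha^s$ with $s > 0$ forces $k \geq s/T$, whence $\nu^+_{\mathrm{osc}}(\tphi_\alpha^s) \geq \lceil s/T \rceil$. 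This already implies that $\nu_\mathrm{osc}$ is unbounded on $\tconto(M,\xi)$.

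The product bound is itself proved in two steps. First, for an embedded monotone positive isotopy $(\varphi_t)_{t\in[0,1]}$ starting at the identity one has $C_+^\alpha([(\varphi_t)]) \in (0, T)$, and for a negative one $C_+^\alpha([(\varphi_t)]) \in (-T, 0)$. The map $t \mapsto C_+^\alpha([(\varphi_s)_{s\in[0,t]}])$ is continuous by Theorem \ref{thm:specselstrongord} and monotone along the isotopy by item 3; were it to reach $T$ at some $t_0 > 0$, item 1 of the same theorem would produce a translated point $x$ of $\varphi_{t_0}$ with translation $T$, and $T$-periodicity of the Reeb flow would then force $\varphi_{t_0}(x) = x$ with $(\varphi_{t_0}^*\alpha)_x = \alpha_x$, contradicting $0 \notin \spec^\alpha(\varphi_{t_0})$ (embeddedness); the lower bound for negative isotopies and the strictness of the signs for $t > 0$ follow analogously. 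Consequently $\left\lceil C_+^\alpha(\tphi^i) \right\rceil_T$ equals $T$ when $\tphi^i$ is positive and $0$ when it is negative, and iterating item 1 of Theorem \ref{thm:thm2} together with the identity $\left\lceil a + mT\right\rceil_T = \left\lceil a\right\rceil_T + mT$ (for $m \in \Z$) yields $C_+^\alpha(\tphi) \leq C_+^\alpha(\tphi^1) + \sum_{i=2}^N \left\lceil C_+^\alpha(\tphi^i) \right\rceil_T \leq kT$ after a short case distinction on the sign of $\tphi^1$.

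For the geodesic assertion, the hypothesis that all Reeb orbits share the minimal period $T$ means that $(\varphi_\alpha^{ts})_{t\in[a,b]}$ is embedded precisely when $(b-a)|s| < T$. Subdividing $[0,1]$ into $\lceil |s|/T \rceil$ equal subintervals thus decomposes $(\varphi_\alpha^{ts})_{t\in[0,1]}$ into that many embedded monotone pieces, all of the same sign as $s$. For $s > 0$ this gives $\mathcal{L}^+_{\mathrm{osc}}((\varphi_\alpha^{ts})) \leq \lceil s/T \rceil$ and $\mathcal{L}^+_{\mathrm{osc}}((\varphi_\alpha^{(1-t)s})) = 0$; combined with the lower bound $\nu^+_\mathrm{osc}(\tphi_\alpha^s) \geq \lceil s/T \rceil$ from the first paragraph and the trivial inequality $\nu^+_\mathrm{osc} \leq \mathcal{L}^+_{\mathrm{osc}}$ for any representing isotopy, this pins down $\nu_\mathrm{osc}(\tphi_\alpha^s) = \lceil s/T \rceil = \mathcal{L}_{\mathrm{osc}}((\varphi_\alpha^{ts})_{t\in[0,1]})$; the case $s < 0$ is symmetric and $s=0$ is trivial. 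The main obstacle is the individual bound $C_+^\alpha \in (-T,T)$ for embedded monotone pieces starting at the identity, since it is precisely what converts the $\left\lceil\cdot\right\rceil_T$-subadditivity into a clean linear count of positive factors.
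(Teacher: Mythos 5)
Your overall strategy is the same as the paper's: bound $C_+^\alpha$ on a product of embedded monotone factors via the $\left\lceil\cdot\right\rceil_T$-subadditivity of Theorem \ref{thm:thm2}, use that each embedded factor has $C_+^\alpha$ strictly between $-T$ and $T$ (the paper's Lemma \ref{lem:discriminant}), and compare with $C_+^\alpha(\tphi_\alpha^s)=s$. However, there is an off-by-one error that breaks the geodesic claim exactly at $s\in T\N_{>0}$. Your product bound should be \emph{strict}: since $C_+^\alpha(\tphi^1)<T$ (resp. $<0$) strictly, the case distinction actually yields $C_+^\alpha(\tphi)<kT$, not $\leq kT$. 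The strict version forces $k>s/T$, i.e. $\nu^+_{\mathrm{osc}}(\tphi_\alpha^s)\geq\left\lfloor s/T\right\rfloor+1$, which exceeds your $\left\lceil s/T\right\rceil$ when $s/T\in\N$. This is not pedantry: for $s=T$ one has $\tphi_\alpha^T\neq\widetilde{\id}$ in general, and $\nu_{\mathrm{osc}}(\tphi_\alpha^T)=2$, not $1$.

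The matching error occurs on the upper-bound side. By your own (correct) criterion, $(\varphi_\alpha^{ts})_{t\in[a,b]}$ is embedded iff $(b-a)|s|<T$; subdividing $[0,1]$ into $\left\lceil s/T\right\rceil$ equal pieces gives pieces of Reeb length $s/\left\lceil s/T\right\rceil$, which equals exactly $T$ when $s/T\in\N$, so those pieces are \emph{not} embedded and the claimed bound $\mathcal{L}^+_{\mathrm{osc}}\leq\left\lceil s/T\right\rceil$ fails there. One needs $\left\lfloor s/T\right\rfloor+1$ pieces (each of length $s/(\left\lfloor s/T\right\rfloor+1)<T$), as in the paper's proof of Corollary \ref{cor:2}. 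With both corrections the chain $\left\lfloor s/T\right\rfloor+1\leq\nu_{\mathrm{osc}}(\tphi_\alpha^s)\leq\mathcal{L}_{\mathrm{osc}}((\varphi_\alpha^{ts})_{t\in[0,1]})\leq\left\lfloor s/T\right\rfloor+1$ closes correctly for all $s$. The unboundedness assertion is unaffected by this issue. A minor stylistic difference from the paper: instead of tracking the negative factors through $\left\lceil\cdot\right\rceil_T=0$, the paper first discards them using monotonicity ($\tphi\cleq\prod_i\tphi_{\sigma(i)}$ over the positive factors only), which avoids your case distinction on the sign of $\tphi^1$; both routes are valid once the strict inequality is kept.
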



\subsection{Towards applications to the Weinstein conjecture}\label{se:weinstein}

Let $(M,\xi)$ be a closed cooriented contact manifold. The Weinstein conjecture states that the Reeb flow of  any contact form supporting the contact distribution $\xi$ admits a non trivial periodic orbit.

\begin{prop}\label{prop:weinstein}
Suppose that $(M,\xi)$ is strongly orderable. Then for any contact form $\alpha$ supporting the contact distribution the map \[N_{\mathrm{spec}}^\alpha : \widetilde{\conto}(M,\xi)\to\R,\ \tphi\mapsto \max\left\{C_+^\alpha(\tphi),-C_-^\alpha(\tphi)\right\}, \] 
takes non-negative values and does not vanish on $\widetilde{\conto}(M,\xi)\setminus\pi_1(\conto(M,\xi))$. Moreover, if there exists a loop $\tphi\in\pi_1(\conto(M,\xi))$ such that $N_\mathrm{spec}^\alpha(\tphi)>0$, $(M,\xi)$ satisfies the Weinstein conjecture. 
\end{prop}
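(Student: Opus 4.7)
My plan is to treat the three assertions of Proposition \ref{prop:weinstein} in turn, since each is a short consequence of the properties of $C_\pm^\alpha$ recalled in Theorem \ref{thm:specselstrongord}.

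First I would observe that, by the very definition $C_-^\alpha\le C_+^\alpha$ built into Theorem \ref{thm:specselstrongord}, one has $C_+^\alpha(\tphi)-C_-^\alpha(\tphi)\ge 0$ for every $\tphi$, so at least one of $C_+^\alpha(\tphi)$ and $-C_-^\alpha(\tphi)$ is nonnegative and hence $N_{\mathrm{spec}}^\alpha\ge 0$. For the non-vanishing on $\widetilde{\conto}(M,\xi)\setminus\pi_1(\conto(M,\xi))$, I would argue by contradiction: if $N_{\mathrm{spec}}^\alpha(\tphi)=0$ then $C_+^\alpha(\tphi)\le 0$ and $C_-^\alpha(\tphi)\ge 0$, which, combined again with $C_-^\alpha\le C_+^\alpha$, forces $C_+^\alpha(\tphi)=C_-^\alpha(\tphi)=0$; property (5) of Theorem \ref{thm:specselstrongord} then yields $\Pi(\tphi)=\id$, so $\tphi\in\pi_1(\conto(M,\xi))$.

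For the Weinstein conclusion, I would unpack the definition of $\spec^\alpha(\tphi)$ in the case $\Pi(\tphi)=\id$. A translation $t$ lies in $\spec^\alpha(\tphi)$ iff there exists $x\in M$ with $\id(x)=\varphi_\alpha^t(x)$ and $(\id^*\alpha)_x=\alpha_x$; the conformal condition is automatic, so the condition reduces to $\varphi_\alpha^t$ having a fixed point, and for $t\ne 0$ such a fixed point must lie on a closed Reeb orbit whose period is a positive divisor of $|t|$. Consequently, if the Reeb flow of $\alpha$ admitted no closed orbit, then $\spec^\alpha(\tphi)=\{0\}$ for every loop $\tphi\in\pi_1(\conto(M,\xi))$; property (1) of Theorem \ref{thm:specselstrongord} would then force $C_\pm^\alpha(\tphi)=0$, so $N_{\mathrm{spec}}^\alpha$ would vanish identically on $\pi_1(\conto(M,\xi))$. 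Taking the contrapositive gives the Weinstein statement.

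There is no genuine obstacle in the proof of the proposition itself: everything follows by unwinding the axioms of Theorem \ref{thm:specselstrongord}. The only point that requires a moment of care is that $\spec^\alpha(\tphi)$ depends only on $\Pi(\tphi)$, so that when $\tphi$ is a loop it reduces to $\{0\}$ together with the periods of closed Reeb orbits of $\alpha$. The genuinely hard problem, which the proposition shifts rather than solves, is of course to \emph{produce} a loop $\tphi\in\pi_1(\conto(M,\xi))$ on which $N_{\mathrm{spec}}^\alpha$ does not vanish: this would require the spectral selectors $C_\pm^\alpha$ to detect nontrivial homotopical information about $\conto(M,\xi)$, and is where the substantive work would have to be done in any concrete application of the proposition to the Weinstein conjecture.
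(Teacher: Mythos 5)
The first two assertions (non-negativity and non-vanishing off $\pi_1(\conto(M,\xi))$) are handled exactly as in the paper, via $C_-^\alpha\leq C_+^\alpha$ and property (5) of Theorem \ref{thm:specselstrongord}; that part of your argument is correct.

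The Weinstein part has a genuine gap. Your contrapositive argument only proves: if $N_{\mathrm{spec}}^\alpha(\tphi)>0$ for some loop $\tphi$, then the Reeb flow \emph{of that particular} $\alpha$ has a non-trivial closed orbit. But the Weinstein conjecture, as stated in Section \ref{se:weinstein}, asserts the existence of a closed Reeb orbit for \emph{every} contact form supporting $\xi$, and a closed orbit for one form does not automatically yield one for another. The paper closes this gap with Lemma \ref{lem:invariance signe}: writing an arbitrary supporting form as $\alpha'=e^f\alpha$, the conformal comparison estimates for $\ell_\pm$ show that the selectors change by positive multiplicative factors, so $N_{\mathrm{spec}}^{\alpha'}(\tphi)>0$ as well; spectrality of $C_\pm^{\alpha'}$ applied to the same loop $\tphi$ then produces a non-trivial closed orbit of $R_{\alpha'}$. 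Your proof needs this additional step (or some substitute for it) to reach the stated conclusion. A second, minor remark: your claim that a nonzero $t\in\spec^\alpha(\tphi)$ for a loop $\tphi$ forces a closed orbit ``whose period is a positive divisor of $|t|$'' is stronger than needed and not quite what follows; all one gets, and all one needs, is that $\varphi_\alpha^t$ has a fixed point $x$ with $t\neq 0$, so the Reeb orbit through $x$ is closed of some period dividing $|t|$ in the sense that $|t|$ is an integer multiple of the minimal period — but this plays no role in the conclusion, so it is harmless.
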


 Recall that in a closed cooriented contact manifold $(M,\xi)$ a contact isotopy $(\varphi_t)_{t\in[0,1]}\subset\conto(M,\xi)$ is said to be positive if $\alpha\left(\frac{d}{dt}\varphi_t\right)>0$ for some, and hence, any $\alpha$ supporting the contact distribution. As a direct corollary of Proposition \ref{prop:weinstein}, we recover Theorem 1.1 of \cite{AFM15} saying that non-orderability implies that the Weinstein conjecture holds.

\begin{cor}\label{cor:Weinstein}
    Let $(M,\xi)$ be a closed cooriented contact manifold. If there exists a positive loop of contactomorphisms $(\varphi_t)_{t\in[0,1]}\subset\conto(M,\xi)$, i.e. a positive contact isotopy  such that $\varphi_0=\varphi_1$, $(M,\xi)$ satisfies the Weinstein conjecture. Moreover, if this loop of contactomorphisms $(\varphi_t)$ is contractible in $\conto(M,\xi)$, the Reeb flow of any contact form $\alpha$ supporting $\xi$ has a contractible non trivial periodic orbit. 
\end{cor}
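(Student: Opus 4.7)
The plan is to split on whether $(M,\xi)$ is strongly orderable: in the strongly orderable case I would invoke Proposition \ref{prop:weinstein} via a positivity-monotonicity argument, and in the remaining case I would run the underlying Legendrian spectral construction of \cite{allais2023spectral} on a contracting Legendrian homotopy, which is essentially the argument of \cite{AFM15}. The latter automatically covers the moreover clause, since a positive contractible loop of contactomorphisms forces non-strong-orderability (its image under $G_\alpha$ is a positive contractible loop of Legendrians through $\Delta$).

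For the strongly orderable case, I would first normalize by composing with $\varphi_0^{-1}$ so that $\varphi_0=\varphi_1=\id$; then $\tphi:=[(\varphi_t)]$ lies in $\pi_1(\conto(M,\xi))$. Compactness of $M\times[0,1]$ together with strict positivity gives $\epsilon:=\min_{t,x}\alpha(\tfrac{d}{dt}\varphi_t)>0$, and the Reeb isotopy $(\varphi_\alpha^{t\epsilon})_{t\in[0,1]}$ representing $\tphi_\alpha^\epsilon$ has constant $\alpha$-derivative $\epsilon\leq\alpha(\tfrac{d}{dt}\varphi_t)$ pointwise, so $\tphi_\alpha^\epsilon\cleq\tphi$. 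Properties 2 and 3 of Theorem \ref{thm:specselstrongord} then force $C_+^\alpha(\tphi)\geq C_+^\alpha(\tphi_\alpha^\epsilon)=\epsilon>0$, whence $N_{\mathrm{spec}}^\alpha(\tphi)>0$, and Proposition \ref{prop:weinstein} delivers a non-trivial periodic Reeb orbit of $\alpha$. Observe that this case is incompatible with the moreover hypothesis: if $(\varphi_t)$ were contractible then $\tphi=\widetilde{\id}$, contradicting $C_+^\alpha(\tphi)>0$.

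For the non-strongly-orderable case, which contains in particular the moreover scenario, by definition there exists a positive loop of Legendrians through $\Delta$ in $(G(M),\Xi_\alpha)$ that contracts to $\Delta$ through Legendrian loops. Since the Reeb field of $\beta_\alpha$ is $R_\alpha$ acting on the second factor of $G(M)=M\times M\times\R$, $\beta_\alpha$-periods descend under $\mathrm{pr}_2$ to $\alpha$-periods on $M$; I would then adapt the construction of $\ell_\pm^{\beta_\alpha}$ from \cite{allais2023spectral} directly to the contracting homotopy, forcing a nonzero $\theta$-translation that must coincide with a Reeb period of $\alpha$. For the moreover clause, the assumed null-homotopy of $(\varphi_t)$ descends via $G_\alpha$ to a null-homotopy of $(G_\alpha(\varphi_t)\cdot\Delta)$ in $\Leg(\Delta)$, and projecting this null-homotopy by $\mathrm{pr}_2$ would exhibit the extracted Reeb orbit as contractible in $M$. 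The principal obstacle lies precisely here: the selectors $C_\pm^\alpha$ are a priori undefined in the non-strongly-orderable setting, so Proposition \ref{prop:weinstein} cannot be quoted verbatim, and one has to revisit its proof at the level of $\uLeg(\Delta)\times\uLeg(\Delta)$, carefully tracking how the contracting homotopy produces a $\theta$-translation realized by a (contractible, in the moreover case) Reeb orbit of $\alpha$ on $M$.
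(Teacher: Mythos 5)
Your strongly orderable branch is correct and is essentially the paper's Case~1: normalize so the loop starts at the identity, use compactness to get $\tphi_\alpha^{\varepsilon}\cleq\tphi$, and conclude $N_{\mathrm{spec}}^\alpha(\tphi)\geq\varepsilon>0$ so that Proposition~\ref{prop:weinstein} applies. Your observation for the moreover clause that a contractible positive loop of contactomorphisms pushes forward under $G_\alpha$ to a contractible positive loop of Legendrians through $\Delta$, hence forces non-strong-orderability, is also exactly the paper's argument.

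The genuine gap is your complementary case. You propose to ``adapt the construction of $\ell_\pm^{\beta_\alpha}$ directly to the contracting homotopy'' in the non-strongly-orderable setting, and you yourself flag that the selectors are undefined there and that the whole argument of \cite{allais2023spectral} would have to be revisited; as written this branch is an unexecuted program, not a proof. But no such work is needed: by the contrapositive of \cite[Theorem 1.19]{CCR}, if $(M,\xi)$ is not strongly orderable then it is not hypertight, and \emph{not hypertight means by definition} that the Reeb flow of every contact form supporting $\xi$ has a non-trivial contractible periodic orbit --- which is the conclusion (and even the stronger ``moreover'' conclusion) with no further argument. This is precisely how the paper closes the case: it splits on hypertight versus not hypertight, uses hypertight $\Rightarrow$ strongly orderable to run the selector argument in the first case, and observes that the second case is vacuously done. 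Replacing your sketched Legendrian-homotopy analysis by this one-line appeal to the definition of hypertightness completes your proof; without it, the non-strongly-orderable case (and hence also the moreover clause, which lives entirely in that case) remains open.
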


\begin{proof}[Proof of Corollary \ref{cor:Weinstein}]\

Up to multiplying the positive loop $(\varphi_t)$ by $\varphi_0^{-1}$, we may assume without loss of generality that the loop $(\varphi_t)$ starts at the identity and we denote by $\tphi:=[(\varphi_t)]\in\pi_1(\conto(M,\xi))$.

\textbf{Case 1:} Suppose that $(M,\xi)$ is hypertight. Then $(M,\xi)$ is strongly orderable by \cite[Theorem 1.19]{CCR} and thus the map $N_{\mathrm{spec}}^\alpha$ of Proposition \ref{prop:weinstein} is well defined on $\widetilde{\conto}(M,\xi)$. Positivity of the loop $(\varphi_t)$ implies that there exists $\varepsilon>0$ such that $\tphi_\alpha^{\varepsilon}\cleq \tphi$. Hence $N_{\mathrm{spec}}^\alpha(\tphi)\geq N_{\mathrm{spec}}^\alpha(\tphi_\alpha^{\varepsilon})=\varepsilon$. By Proposition \ref{prop:weinstein} we deduce that $(M,\xi)$ satisfies the Weinstein conjecture.

\textbf{Case 2:} Suppose that $(M,\xi)$ is not hypertight. Then, by definition, the Reeb flow of any contact form admits a non trivial contractible periodic orbit.\\

Moreover, suppose that the positive loop of contactomorphisms $(\varphi_t)_{t\in S^1}$ is contractible. Denote by $(\varphi_t^s)_{t\in S^1, s\in[0,1]}$ a homotopy between loops based at the identity going from $(\varphi_t)$ to the constant loop. An easy computation shows that the speed of the Legendrian loop $(\Lambda_t):=(G_\alpha(\varphi_t))$ is everywhere transverse to the contact distribution $\Xi_\alpha$. Since $(G_\alpha(\varphi_t^s)(\Delta))_{t\in S^1,s\in[0,1]}$ is an homotopy of Legendrian loops contracting  $(\Lambda_t)$ to $\Delta$ we deduce that $(M,\xi)$ is not strongly orderable. Thus, by \cite[Theorem 1.19]{CCR}, $(M,\xi)$ is not hypertight, and so the Reeb flow of any contact form admits a contractible non trivial periodic orbit. \end{proof}


Therefore, Proposition \ref{prop:weinstein} generalizes Theorem 1.1 of \cite{AFM15} since in a hypertight contact manifold the existence of a positive loop $(\varphi_t)\subset\conto(M,\xi)$ implies the existence of $\tphi\in\widetilde{\conto}(M,\xi)$ such that $N_{\mathrm{\spec}}^\alpha(\tphi)>0$ while the converse is not necessarily true. Contact manifolds where it would be natural to investigate whether one can apply the above Proposition \ref{prop:weinstein} to prove the Weinstein conjecture are Massot-Niedelkruger-Wendl hypertight contact manifolds \cite{MassotNiederkrugerWendl2013}. Indeed, these latter hypertight contact manifolds are trivial $\T^2$-bundles with a natural $S^1$-action which gives rise to an element $\tphi\in\pi_1(\conto(M,\xi))\setminus\{0\}$. Although these elements $\tphi$ are not positive (and so Theorem 1.1 of \cite{AFM15} cannot be applied in this context), the number $N_{\mathrm{spec}}^\alpha(\tphi)$ might be positive, and hence these contact manifolds would satisfy the Weinstein conjecture by virtue of Proposition \ref{prop:weinstein}. Note that, in a recent preprint \cite{oh2024contactinstantonsproofsweinsteins}, Oh has announced a proof of the Weinstein conjecture in full generality.

\subsection*{Organization of the paper}
We give the basic definitions and conventions in Section \ref{se:preleminaires}. Then in Section \ref{se:def spec sel} we define and give the important properties of the Legendrian spectral selectors $\ell_\pm^\beta$. In Section \ref{se:preuves des thm} we prove Theorem \ref{thm:specselstrongord} and Theorem \ref{thm:thm2}. In Section \ref{se:preuve des corollaires} we prove the corollaries about the geometry of $\widetilde{\conto}(M,\xi)$ stated in Section \ref{se:geometry}. In Section \ref{sec:bft} we prove the contact big fiber theorems and finally in Section \ref{se:weinstein preuve} we prove Proposition \ref{prop:weinstein}.

\subsection*{Acknowlegment} The author thanks Federica Pasquotto, Klaus Niederkrüger and Sheila Sandon for valuable discussions. 

\section{Preliminaries}\label{se:preleminaires}

Consider $(M,\xi)$ a cooriented contact manifold -- which we do not assume to be necessarily closed in this Section \ref{se:preleminaires} -- that is to say, $\xi$ is a distribution of cooriented hyperplanes on $M$ which is maximally non-integrable, i.e. there exists a $1$-form $\alpha$ on $M$ whose kernel agrees with $\xi$ and satisfies $\alpha\wedge (\ud \alpha)^n\ne \emptyset$ where $2n+1$ is the dimension of $M$, and $TM/\xi\to M$ is an oriented line bundle. A Legendrian of $(M,\xi)$ is a submanifold of $M$ of dimension $n$ which is everywhere tangent to $\xi$ and a contactomorphism of $(M,\xi)$ is a diffeomorphism of $M$ that preserves the contact distribution and its coorientation. \\

A contact isotopy $(\varphi_t)_{t\in I}$ of $(M,\xi)$, where $I\subset\R$ is an interval, is a $I$-family of contactomorphisms such that $I\times M\to M,\ (t,x)\mapsto \varphi_t(x)$ is smooth, and we denote by $\conto(M,\xi)$ the set of contactomorphisms that are contact isotopic to the identity. For a closed Legendrian $\Lambda\subset (M,\xi)$ one can define the isotopy class of $\Lambda$ to be $\Leg(\Lambda)=\{\varphi(\Lambda)\ |\ \varphi\in\conto(M,\xi)\}$. \\

On the space of contact $[0,1]$-isotopies starting at the identity $\mathcal{P}(\id)$ one can consider the equivalence relation $\sim$ of being isotopic relative to endpoints, i.e. $(\varphi_t)\sim(\psi_t)$ if there exists a smooth map $\Phi : [0,1]\times[0,1]\times M\to M$ such that $\Phi(0,t,x)=\varphi_t(x)$, $\Phi(1,t,x)=\psi_t(x)$ and $\Phi(s,0,x)=x$, $\Phi(s,1,x)=\varphi_1(x)=\psi_1(x)$ for all $s,t,x$. Timewise composition turns  $\widetilde{\conto}(M,\xi):=\mathcal{P}(\id)/\sim$ into a group, i.e. $[(\varphi_t)]\cdot[(\psi_t)]:=[(\varphi_t\circ\psi_t)]$,  the covering map $\Pi : \widetilde{\conto}(M,\xi)\to \conto(M,\xi)$, $\Pi([(\varphi_t)])=\varphi_1$ into an epimorphism. If $M$ is closed, $\widetilde{\conto}(M,\xi)$ can be canonically identified with the universal cover of $\conto(M,\xi)$ endowed with the $C^k$-topology, for any $k\in\N_{>0}\cup\{+\infty\}$. Similarly, if $\Lambda\subset (M,\xi)$ is a closed Legendrian, the space $\Leg(\Lambda)$ endowed with the $C^k$-topology is locally contractible and therefore admits a universal cover $\uLeg(\Lambda)$. As before, $\uLeg(\Lambda)$ can be identified with equivalence classes of $[0,1]$-Legendrian isotopies $(\Lambda_t)$ starting at $\Lambda$ where two Legendrian isotopies are identified if they are isotopic relative to endpoints (see \cite{allais2023spectral,arlove2025invariantdistancesspaceslegendrians} for more details). By a slight abuse of notations, we still denote by $\Pi : \uLeg(\Lambda)\to\Leg(\Lambda)$ the covering map. Moreover, there is a transitive action of $\widetilde{\conto}(M,\xi)$ on $\uLeg(\Lambda)$ given by timewise composition, i.e. $[(\varphi_t)]\cdot[(\Lambda_t)]=[(\varphi_t(\Lambda_t))]$.\\

Finally, recall that a contact isotopy $(\varphi_t)_{t\in I}\subset\conto(M,\xi)$ is non-negative if $\alpha(\frac{d}{dt}\varphi_t)\geq 0$. From this notion Eliashberg and Polterovich \cite{EP00} introduced the following natural binary relation $\cleq$ on $\widetilde{\conto}(M,\xi)$: $\tphi\cleq\tpsi$ if there exists a non-negative contact isotopy $(\varphi_t)$ such that $[(\varphi_t)]=\tphi^{-1}\cdot\psi\in\widetilde{\conto}(M,\xi)$. The binary relation $\cleq$ on $\widetilde{\conto}(M,\xi)$ is bi-invariant, i.e. $\tphi_1\cleq \tphi_2$ and $\tpsi_1\cleq \tpsi_2$ implies that $\tphi_1\cdot\tpsi_1\cleq \tphi_2\cdot \tpsi_2$, transitive and reflexive.  Similarly, one defines a binary relation $\cleq$ on $\uLeg(\Lambda)$ by: $\tLambda\cleq \tLambda'$ if there exists $\tphi\in\widetilde{\conto}(M,\xi)$ such that $\id\cleq\tphi$ and $\tphi\cdot \tLambda=\tLambda'$. The binary relation $\cleq$ on $\uLeg(\Lambda)$ is invariant (\cite[Prop 2.9]{allais2023spectral}), i.e. $\tphi_1\cleq \tphi_2$ and $\tLambda_1\cleq\tLambda_2$ implies that $\tphi_1\cdot\tLambda_1\cleq\tphi_2\cdot\tLambda_2$, transitive and reflexive. \\

When the binary relation $\cleq$ on $\widetilde{\conto}(M,\xi)$ (resp. $\uLeg(\Lambda)$) is a partial order, i.e. $\cleq$ is also antisymmetric, we say that $\widetilde{\conto}(M,\xi)$ (resp. $\uLeg(\Lambda)$) is orderable. \\

When $M$ is closed we say that $(M,\xi)$ is strongly orderable if, for some, and hence any, contact form $\alpha$ supporting $\xi$, the space $\uLeg(\Delta)$ is orderable, where $\Delta:=\{(x,x,0)\ | x\in M\}$ is the Legendrian of the contact manifold $(G(M):=M\times M\times \R,\Xi_\alpha)$ described in Section \ref{se:spectral selector intro} of the introduction. Note that $(M,\xi)$ being strongly orderable implies that $\widetilde{\conto}(M,\xi)$ and $\widetilde{\conto}(G(M),\Xi_\alpha)$ are both orderable. 


\section{Definition of the Legendrian spectral selectors}\label{se:def spec sel}

Let $(M,\xi)$ be a closed contact manifold and $\alpha$ a contact form supporting $\xi$. Consider $(G(M):=M\times M\times\R,\Xi_\alpha)$ the contact manifold described in Section \ref{se:spectral selector intro} of the introduction and $\beta$ a complete contact form supporting $\Xi_\alpha$, i.e. its Reeb vector field is complete. As for contactomorphisms, to any couple $\tLambda_1,\tLambda_2\in\uLeg(\Delta)\times\uLeg(\Delta)$ we define their $\beta$-spectrum to be:
\[ \spec^\beta(\tLambda_1,\tLambda_2):=\{t\in\R\ |\ \Pi(\tLambda_1)\cap \varphi_\beta^t(\Pi(\tLambda_2))\ne\emptyset\}.\]

In \cite{allais2023spectral}, the authors considered the maps $\ell_\pm^\beta : \uLeg(\Delta)\times \uLeg(\Delta)\to\R\cup\{\mp\infty\}$, where for all $(\tLambda_1,\tLambda_2)\in\uLeg(\Delta)\times\uLeg(\Delta)$ 
\[\ell_+^\beta(\tLambda_1,\tLambda_2):=\inf\{t\in\R\ |\ \tLambda_1\cleq\tphi_\beta^t\cdot \tLambda_2\}\quad \text{ and }\quad  \ell_-^\beta(\tLambda_1,\tLambda_2):=\sup\{t\in\R\ |\ \tphi_\beta^t\cdot\tLambda_2\cleq \tLambda_1\} \]
and showed the following.
\begin{prop}[Theorem 1.1 \cite{allais2023spectral}]\label{thm:legendrian specsel}
If $\uLeg(\Delta)$ is orderable, the maps $\ell_\pm^\beta$ take values in $\R$ are continuous and $\ell_-^\beta\leq\ell_+^\beta$. Moreover for any $\tLambda_1,\tLambda_2,\tLambda_3\in\uLeg(\Delta)$ and $\tPhi\in\widetilde{\conto}(G(M),\Xi_\alpha)$,
\begin{enumerate}[1.]
    \item $\ell^\beta_\pm(\tLambda_1,\tLambda_2)\in\spec^\beta(\tLambda_1,\tLambda_2)$
    \item $\ell^\beta_\pm(\tLambda_1,\tLambda_1)=0$ and $\ell^\beta_\pm(\tphi_\beta^s\cdot \tLambda_1,\tLambda_2)=s+\ell_\pm^\beta(\tLambda_1,\tLambda_2)$, for all $s\in\R$
    \item if $\tLambda_1\cleq\tLambda_2$, $\ell_\pm^\beta(\tLambda_1,\tLambda_3)\leq \ell_\pm^\beta(\tLambda_2,\tLambda_3)$
    \item if $\ell_+^\beta(\tLambda_1,\tLambda_2)=\ell_-^\beta(\tLambda_1,\tLambda_2)=0$,  $\Pi(\tLambda_1)=\Pi(\tLambda_2)$
    \item $\ell_+^\beta(\tLambda_1,\tLambda_3)\leq \ell_+^\beta(\tLambda_1,\tLambda_2)+\ell_+^\beta(\tLambda_2,\tLambda_3)$ and $\ell_-^\beta(\tLambda_1,\tLambda_3)\geq \ell_-^\beta(\tLambda_1,\tLambda_2)+\ell_-^\beta(\tLambda_2,\tLambda_3)$
    \item $\ell_+^\beta(\tLambda_1,\tLambda_2)=-\ell_-^\beta(\tLambda_2,\tLambda_1)$
    \item $\ell_\pm^\beta(\tPhi\cdot\tLambda_1,\tPhi\cdot\tLambda_2)=\ell_\pm^{\Pi(\tPhi)^*\beta}(\tLambda_1,\tLambda_2)$. 
\end{enumerate}
    
\end{prop}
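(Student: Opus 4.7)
The plan is to establish properties 1--7 in three stages: first the finiteness and comparison of $\ell_\pm^\beta$, then the formal algebraic properties 2, 3, 5, 6, 7, and finally the genuinely topological properties 1 and 4, from which continuity will also follow. For finiteness I would exploit that $M$ is closed (so $\Delta\subset G(M)$ is compact) and that $\beta$ has complete Reeb flow: picking contact isotopies of $G(M)$ representing $\tLambda_1$ and $\tLambda_2$ starting from $\Delta$, their contact Hamiltonians are bounded on $\Delta$ by compactness, and comparison with a sufficiently long Reeb flow of $\beta$ produces $N>0$ with $\tphi_\beta^{-N}\cdot\tLambda_2\cleq\tLambda_1\cleq\tphi_\beta^N\cdot\tLambda_2$, hence $\ell_\pm^\beta(\tLambda_1,\tLambda_2)\in[-N,N]$. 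The inequality $\ell_-^\beta\leq\ell_+^\beta$ then follows from orderability: having $\ell_+^\beta<t_1<t_2<\ell_-^\beta$ would combine $\tLambda_1\cleq\tphi_\beta^{t_1}\cdot\tLambda_2$, $\tphi_\beta^{t_2}\cdot\tLambda_2\cleq\tLambda_1$ and $\tphi_\beta^{t_1}\cdot\tLambda_2\cleq\tphi_\beta^{t_2}\cdot\tLambda_2$ to violate antisymmetry of $\cleq$ on $\uLeg(\Delta)$.

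Properties 2, 3, 5, 6, 7 should follow formally from bi-invariance and transitivity of $\cleq$ on $\uLeg(\Delta)$, together with $(\tphi_\beta^t)$ being a one-parameter subgroup. The shift in property 2 rewrites $\tphi_\beta^s\cdot\tLambda_1\cleq\tphi_\beta^t\cdot\tLambda_2$ as $\tLambda_1\cleq\tphi_\beta^{t-s}\cdot\tLambda_2$, while $\ell_\pm^\beta(\tLambda,\tLambda)=0$ combines non-negativity of the Reeb flow (giving $\ell_+^\beta\leq 0$) with orderability (giving $\ell_+^\beta\geq 0$). Property 3 is direct transitivity; property 5 chains two order relations using the invariance of $\cleq$ under the $\widetilde{\conto}(G(M),\Xi_\alpha)$-action; property 6 is the change of variable $t\mapsto -t$ in the definitions; and property 7 uses that $\tPhi^{-1}\tphi_\beta^t\tPhi$ is the time-$t$ flow of the Reeb vector field of $\Pi(\tPhi)^*\beta$, combined with invariance of $\cleq$ under $\tPhi$.

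Property 1 is the technical heart. Set $\tau:=\ell_+^\beta(\tLambda_1,\tLambda_2)$ and suppose for contradiction that $\Pi(\tLambda_1)\cap\varphi_\beta^\tau(\Pi(\tLambda_2))=\emptyset$. By compactness there are disjoint neighborhoods separating these two Legendrians, and an Eliashberg--Polterovich-style cut-off produces a compactly supported non-negative contact Hamiltonian on $G(M)$ whose flow is strictly positive near $\Pi(\tLambda_1)$ but leaves a neighborhood of $\varphi_\beta^{\tau+\delta}(\Pi(\tLambda_2))$ fixed for all sufficiently small $\delta>0$. Applying it to the relation $\tLambda_1\cleq\tphi_\beta^{\tau+\delta}\cdot\tLambda_2$ yields $\tphi_\beta^\varepsilon\cdot\tLambda_1\cleq\tphi_\beta^{\tau+\delta}\cdot\tLambda_2$ for some $\varepsilon>0$, i.e. $\tLambda_1\cleq\tphi_\beta^{\tau+\delta-\varepsilon}\cdot\tLambda_2$, contradicting the infimum for $\delta<\varepsilon$. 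Property 4 uses the same mechanism: once $0\in\spec^\beta(\tLambda_1,\tLambda_2)$, any putative $p\in\Pi(\tLambda_1)\setminus\Pi(\tLambda_2)$ supports a compactly supported non-negative contact Hamiltonian vanishing on $\Pi(\tLambda_2)$ but strictly positive at $p$, which forces $\ell_+^\beta$ strictly below $0$. Continuity follows from the Lipschitz sandwich $\tphi_\beta^{-\varepsilon}\cdot\tLambda\cleq\tLambda'\cleq\tphi_\beta^\varepsilon\cdot\tLambda$ for $C^1$-close lifts, combined with properties 2 and 3.

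The main obstacle will be property 1, specifically producing the strict Reeb gain $\varepsilon$ from non-intersection via a compactly supported non-negative contact isotopy of $G(M)$. This is the Eliashberg--Polterovich-style local perturbation lemma at the heart of the construction, and both property 4 and the continuity estimate rest on variants of the same technique.
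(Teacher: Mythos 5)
First, a remark on scope: the paper does not prove this proposition at all — it is imported verbatim as Theorem~1.1 of \cite{allais2023spectral} and used as a black box — so there is no in-paper argument to compare against; I can only assess your proposal on its own terms. The formal layer is sound: finiteness from compactness of $\Delta$ and completeness of the Reeb flow, properties 2, 3, 5, 6, 7, and the continuity sandwich all follow by the bookkeeping you describe. One small imprecision: antisymmetry alone only gives $\tphi_\beta^{t_1}\cdot\tLambda_2=\tphi_\beta^{t_2}\cdot\tLambda_2$ for $t_1<t_2$; to conclude you must also invoke the standard equivalence between orderability and the non-existence of a contractible positive loop, so that $\tphi_\beta^{s}\cdot\tLambda=\tLambda$ is impossible for $s>0$. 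The same point is needed for $\ell_+^\beta(\tLambda,\tLambda)\geq 0$.

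The genuine gap is in property 4 (and, to a lesser degree, in the quantitative details of property 1). For property 1 your mechanism is correct, but note that you need $h\geq\varepsilon$ on a neighbourhood of the \emph{entire} trajectory of $\Pi(\tLambda_1)$ under the flow of $h$, not merely $h>0$ near $\Pi(\tLambda_1)$, in order to convert the push into $\tphi_\beta^\varepsilon\cdot\tLambda_1\cleq\widetilde{\psi}\cdot\tLambda_1$; this is achievable exactly because $\Pi(\tLambda_1)$ is compact and disjoint from $\varphi_\beta^t(\Pi(\tLambda_2))$ for all $t$ in an interval around $\tau$. For property 4 the same mechanism cannot close. By property 1, $\ell_+^\beta=\ell_-^\beta=0$ forces $\Pi(\tLambda_1)\cap\Pi(\tLambda_2)\neq\emptyset$, so any non-negative $h$ vanishing near $\Pi(\tLambda_2)$ necessarily vanishes somewhere on $\Pi(\tLambda_1)$ and is therefore \emph{not} bounded below by a positive constant along $\Pi(\tLambda_1)$. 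Its flow $\widetilde{\psi}$ consequently does not dominate any positive amount of Reeb flow on $\tLambda_1$: applying it to $\tLambda_1\cleq\tphi_\beta^\delta\cdot\tLambda_2$ only yields $\widetilde{\psi}\cdot\tLambda_1\cleq\tphi_\beta^\delta\cdot\tLambda_2$ (the left-hand side moves up inside the same inequality; the threshold $\delta$ does not decrease), and no contradiction with $\ell_+^\beta=0$ results. What non-degeneracy actually requires is the Eliashberg--Polterovich/Chernov--Nemirovski-type statement that a non-negative Legendrian isotopy with distinct endpoints can be traded for a \emph{uniform} positive amount of Reeb flow; that lemma — which involves decomposing the isotopy and redistributing its positivity, not a single local cutoff — is the technical core of \cite{allais2023spectral} and is missing from your argument.
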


\section{Proofs of Theorem \ref{thm:specselstrongord} and Theorem \ref{thm:thm2}}\label{se:preuves des thm}
In Section \ref{se:preuves des thm} we adopt the notation of Section \ref{se:def spec sel} and we assume that $(M,\xi)$ is a closed strongly orderable contact manifold. We fix a contact form $\alpha$ supporting $\xi$ and denote by $\beta:=\beta_\alpha$ the associated contact form on $G(M)=M\times M\times \R$.

\subsection{Proof of Theorem \ref{thm:specselstrongord}}\label{se:preuve stg}
As mentioned in the introduction, Theorem \ref{thm:specselstrongord} can essentially be seen as a reformulation of Proposition 7.2 in \cite{arlove2024contactnonsqueezingvariousclosed}. However, for completeness, in Section \ref{se:preuve stg}, we present a complete proof of Theorem \ref{thm:specselstrongord}. 

\begin{proof}[Proof of Theorem \ref{thm:specselstrongord}]\
Since the map $\widetilde{\conto}(M,\xi)\to \uLeg(\Delta)$, $\tphi\mapsto \widetilde{G_\alpha}(\tphi)\cdot \widetilde{\Delta}$ is continuous, we deduce the continuity of $C_\pm^\alpha$. Moreover, since $\ell_-^\beta\leq\ell_+^\beta$ we deduce that $C_-^\alpha\leq C_+^\alpha$. Let us now prove the seven properties listed in Theorem \ref{thm:specselstrongord}.
\begin{enumerate}[1.]
\item By a direct computation we get that $\varphi_\beta^t(x_1,x_2,\theta)=(x_1,\varphi_\alpha^t(x_2),\theta)$ for all $t\in\R$ and all $(x_1,x_2,\theta)\in G(M)$. Thus $\spec^\alpha(\tphi)=\spec^\beta(\widetilde{G_\alpha}(\tphi)\cdot \tLambda,\tLambda)$, and therefore $C_\pm^\alpha(\tphi)\in\spec^\alpha(\tphi)$ by the first property of $\ell_\pm^\beta$ stated in Theorem \ref{thm:legendrian specsel}.
\item Since $\ell_\pm^\beta(\tDelta,\tDelta)=0$, we deduce that $C_\pm^\alpha(\widetilde{\id})=0$. Moreover, by what we have said in the previous point, $\widetilde{G_\alpha}(\tphi_\alpha^t)=\tphi_\beta^t$ for all $t\in\R$. Therefore for any $\tphi\in\widetilde{\conto}(M,\xi)$, denoting by $\tPhi:=\widetilde{G_\alpha}(\tphi)$ we get the desired equality
\[C_\pm^\alpha(\tphi_\alpha^t\cdot\tphi)=\ell_\pm^\beta(\tphi_\beta^t\cdot \tPhi\cdot\tDelta,\tDelta)=t+\ell_\pm^\beta(\tPhi\cdot\tDelta,\tDelta)=t+C_\pm^\alpha(\tphi),\quad \text{for all }t\in\R.\]
\item Let $(\varphi_t)\subset\conto(M,\xi)$ be a contact isotopy and denote by $(\Phi_t):=(G_\alpha(\varphi_t))$ the corresponding contact isotopy in $\conto(G(M),\Xi_\alpha)$. By a direct computation   we get $\alpha\left(\frac{d}{dt}\varphi_t(x_2)\right)=\beta\left(\frac{d}{dt}\Phi_t(x_1,x_2,\theta)\right)$ for all $(x_1,x_2,\theta)\in M\times M\times \R$. In particular, for any $\tphi_1,\tphi_2\in\widetilde{\conto}(M,\xi)$, if $\tphi_1\cleq\tphi_2$ then $\tPhi_1\cleq\tPhi_2$ where $\tPhi_1=\widetilde{G_\alpha}(\tphi_1),\tPhi_2=\widetilde{G_\alpha}(\tphi_2)\in\widetilde{\conto}(G(M),\Xi_\alpha)$. Thus by invariance of the binary relation on $\uLeg(\Delta)$ (\cite[Proposition 2.9]{allais2023spectral}) and the third property of $\ell^\beta_\pm$ stated in Theorem \ref{thm:legendrian specsel} we deduce the desired inequality $C_\pm^\alpha(\tphi_1)\leq C_\pm^\alpha(\tphi_2)$. 
\item Let $\tphi,\tpsi\in\widetilde{\conto}(M,\xi)$ and denote by $\tPhi:=\widetilde{G_\alpha}(\tphi)$, $\tPsi:=\widetilde{G_\alpha}(\tpsi)$. Thus 
\[\begin{aligned}C_+^\alpha(\tphi\cdot\tpsi)=\ell_+^\beta(\tPhi\cdot\tPsi\cdot\tDelta,\tDelta)&\leq \ell_+^\beta(\tPhi\cdot\tPsi\cdot \tDelta,\tPhi\cdot\tDelta)+\ell_+^\beta(\tPhi\cdot\tDelta,\tDelta)\\
&=\ell_+^{\Pi(\tPhi)^*\beta}(\tPsi\cdot\tDelta,\tDelta)+C_+^\alpha(\tphi),
\end{aligned}\]
where the inequality follows from the fifth property of $\ell_+^\beta$ and the last equality from the seventh property of $\ell_+^\beta$. Since $\Pi(\tPhi)=\Pi(\widetilde{G}_\alpha(\tphi))=G_\alpha(\Pi(\tphi))$ this proves the triangle inequality satisfied by $C_+^\alpha$. A similar argument allows to prove the inequality satisfied by $C_-^\alpha$. 
\item   Let $\tphi\in\widetilde{\conto}(M,\xi)$ so that $C_\pm^\alpha(\tphi)=0$. It implies by the fourth property of $\ell_\pm^\beta$ that $\Pi(\tPhi\cdot\tDelta)=\Pi(\tDelta)=\Delta$, where $\tPhi=\widetilde{G_\alpha}(\tphi)$. Since $\Pi(\tPhi\cdot\tDelta)=\{(x,\Pi(\tphi)(x),g(x)\ |\ x\in M\}=\{(x,x,0)\ | x\in M\}$, where $g$ is the $\alpha$-conformal factor of $\Pi(\tphi)$, i.e. $\Pi(\tphi)^*\alpha=e^g\alpha$, we deduce that $\Pi(\tphi)=\id$.

\item We prove the inequality only for $C_+^\alpha$ since the same argument applies to $C_-^\alpha$. An easy computation shows that if $g:M\to\R$ is the $\alpha$-conformal factor of $\varphi\in\conto(M,\xi)$ then $\overline{g} : G(M)\to \R,\ (x_1,x_2,\theta)\mapsto g(x_2)$ is the $\beta$-conformal factor of $G_\alpha(\varphi)\in\conto(G(M),\Xi_\alpha)$. In particular if $\varphi^*\alpha =\alpha$, then $G_\alpha(\varphi)^*\beta=\beta$. Thus, if $\tphi\in\widetilde{\conto}(M,\xi)$ is such that $\Pi(\tphi)^*\alpha=\alpha$, by the triangle inequality proved at the point 4, we get the desired inequality
\[C_+^\alpha(\tphi\cdot\tpsi)\leq C_+^\alpha(\tphi)+\ell_+^\beta(\widetilde{G_\alpha}(\tpsi)\cdot \tDelta,\tDelta)=C_+^\alpha(\tphi)+C_+^\alpha(\tpsi)\]
for any $\tpsi\in\widetilde{\conto}(M,\xi)$.

\item Consider $\tphi\in\widetilde{\conto}(M,\xi)$ so that it can be represented by a $\alpha$-strict contact isotopy $(\varphi_t)$. Then $\spec^\alpha(\tphi_s\cdot\tpsi\cdot\tphi_s^{-1})=\spec^\alpha(\tpsi)$ for all $s\in[0,1]$, where $\tphi_s:=[(\varphi_{ts})_{t\in[0,1]}]$. By continuity of $C_\pm^\alpha$ and the nowhere density of $\spec^\alpha(\tpsi)$ (see \cite[Lemma 2.11]{albers}) we deduce that $s\mapsto C_\pm^\alpha(\tphi_s\cdot\tpsi\cdot\tphi_s^{-1})$ is constant and therefore $C_\pm^\alpha(\tphi\cdot\tpsi\cdot\tphi^{-1})=C_\pm^\alpha(\tpsi)$.
\item  If $\tphi\in\widetilde{\conto}(M,\xi)$ is such that $\Pi(\tphi)^*\alpha=\alpha$, denoting by $\tPhi:=\widetilde{G_\alpha}(\tphi)\in\widetilde{\conto}(G(M),\Xi_\alpha)$, we get that $\Pi(\tPhi)^*\beta=\beta$ by what we have said in the sixth point of this proof. Therefore, $\ell_+^\beta(\tPhi\cdot \tDelta,\tDelta)=\ell_+^{\Pi(\tPhi)^*\beta}(\tPhi^{-1}\cdot\tPhi\cdot \tDelta,\tPhi^{-1}\cdot\tDelta)=\ell_+^\beta(\tDelta,\tPhi^{-1}\cdot\tDelta)$ and hence
\[C_+^\alpha(\tphi)=\ell_+^\beta(\tPhi\cdot \tDelta,\tDelta)=\ell_+^\beta(\tDelta,\tPhi^{-1}\cdot\tDelta)=-\ell_-^\beta(\tPhi^{-1}\cdot\tDelta,\tDelta)=-C_-^\alpha(\tphi^{-1}),\]
where the third equality is a consequence of the sixth property of $\ell_\pm^\beta$. 
\end{enumerate}
\end{proof}

\subsection{Proof of Theorem \ref{thm:thm2}}\label{se:preuve thm2}

In this Subsection \ref{se:preuve thm2} we assume morever that the Reeb flow of $\alpha$ on $(M,\xi)$ is $T$-periodic for some $T>0$. Therefore the Reeb flow of $\beta=\beta_\alpha$ is also $T$-periodic on $G(M)=M\times M\times \R$, since $\varphi_\beta^t(x_1,x_2,\theta)=(x_1,\varphi_\alpha^t(x_2),\theta)$ for all $t\in\R$ and all $(x_1,x_2,\theta)\in G(M)$. In particular, $\tphi_\beta^{NT}\in\pi_1(\conto(G(M),\Xi_\alpha))$ lies in the center of $\widetilde{\conto}(G(M),\Xi_\alpha)$ for all $N\in\Z$. We will prove the that $C_\pm^\alpha$ have the three properties stated in Theorem \ref{thm:thm2} in the three next lemmas. 

\begin{lem}\label{lem:poincare}
    $\left\lceil C_+^\alpha(\tphi)\right\rceil_T=-\left\lfloor C_-^\alpha(\tphi^{-1})\right\rfloor_T$ for all $\tphi\in\widetilde{\conto}(M,\xi)$.
\end{lem}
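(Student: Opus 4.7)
The approach is to translate both sides into the Legendrian spectral selector $\ell_+^\beta$ on $\uLeg(\Delta)$ and then exploit the fact that $\tphi_\beta^{nT}$ is central in $\widetilde{\conto}(G(M),\Xi_\alpha)$ for every $n \in \Z$, which is a consequence of the $T$-periodicity of the Reeb flow of $\beta$. Writing $\tPhi := \widetilde{G_\alpha}(\tphi)$, the definitions give $C_+^\alpha(\tphi) = \ell_+^\beta(\tPhi \cdot \tDelta, \tDelta)$ and $C_-^\alpha(\tphi^{-1}) = \ell_-^\beta(\tPhi^{-1} \cdot \tDelta, \tDelta)$, and property 6 of Proposition \ref{thm:legendrian specsel} yields $-C_-^\alpha(\tphi^{-1}) = \ell_+^\beta(\tDelta, \tPhi^{-1} \cdot \tDelta)$. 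Using the elementary identity $\lceil -x \rceil_T = -\lfloor x \rfloor_T$, the lemma becomes the equality
\[\lceil \ell_+^\beta(\tPhi \cdot \tDelta, \tDelta) \rceil_T = \lceil \ell_+^\beta(\tDelta, \tPhi^{-1} \cdot \tDelta) \rceil_T.\]

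To prove this ceiling identity, I would establish, for each $n \in \Z$, the equivalence
\[\ell_+^\beta(\tPhi \cdot \tDelta, \tDelta) \leq nT \iff \ell_+^\beta(\tDelta, \tPhi^{-1} \cdot \tDelta) \leq nT,\]
which immediately yields the desired ceiling equality since $\lceil x\rceil_T$ is determined by the collection of integers $n$ with $x \leq nT$. Combining monotonicity of the Reeb flow with closedness of the order $\cleq$ on $\uLeg(\Delta)$ (which is standard in the strongly orderable setting), each of these inequalities translates into a Legendrian order relation involving $\tphi_\beta^{nT}$: namely $\tPhi \cdot \tDelta \cleq \tphi_\beta^{nT} \cdot \tDelta$ and $\tDelta \cleq \tphi_\beta^{nT} \cdot \tPhi^{-1} \cdot \tDelta$, respectively. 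The periodicity hypothesis enters here, since $\Pi(\tphi_\beta^{nT}) = \varphi_\beta^{nT} = \id$, so that $\tphi_\beta^{nT} \in \pi_1(\conto(G(M),\Xi_\alpha))$ and is therefore central in the universal cover $\widetilde{\conto}(G(M),\Xi_\alpha)$.

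The equivalence between the two Legendrian order relations is then purely formal: given $\tPsi$ with $\widetilde{\id} \cleq \tPsi$ and $\tPsi \cdot \tPhi \cdot \tDelta = \tphi_\beta^{nT} \cdot \tDelta$, set $\tPsi' := \tPhi^{-1} \cdot \tPsi \cdot \tPhi$; centrality of $\tphi_\beta^{nT}$ yields $\tPsi' \cdot \tDelta = \tPhi^{-1} \cdot \tphi_\beta^{nT} \cdot \tDelta = \tphi_\beta^{nT} \cdot \tPhi^{-1} \cdot \tDelta$, while two applications of bi-invariance of $\cleq$ give $\widetilde{\id} \cleq \tPsi'$; the reverse implication is the same argument with the roles of $\tPhi$ and $\tPhi^{-1}$ interchanged. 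I expect the main technical point to be the closedness of $\cleq$ at the exact values $nT$, ensuring that the infimum defining $\ell_+^\beta$ is attained so that the inequality $\ell_+^\beta \leq nT$ genuinely corresponds to the relation $\cleq \tphi_\beta^{nT}$; this is the only place where strong orderability is used nontrivially, and once it is granted the remainder of the argument reduces to mechanical manipulation combining centrality of $\tphi_\beta^{nT}$ with the bi-invariance of $\cleq$.
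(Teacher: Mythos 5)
Your reduction and the algebraic core of your argument coincide with the paper's mechanism: both proofs rest on the centrality of $\tphi_\beta^{nT}$ in $\widetilde{\conto}(G(M),\Xi_\alpha)$, the bi-invariance of $\cleq$ on $\uLeg(\Delta)$, and property 6 of Proposition \ref{thm:legendrian specsel} to convert $-C_-^\alpha(\tphi^{-1})$ into $\ell_+^\beta(\tDelta,\tPhi^{-1}\cdot\tDelta)$. However, there is a genuine gap at the step you yourself flag and then wave away. Your equivalence
\[
\ell_+^\beta(\tLambda_1,\tLambda_2)\leq nT \iff \tLambda_1\cleq\tphi_\beta^{nT}\cdot\tLambda_2
\]
requires, in the forward direction, that the infimum defining $\ell_+^\beta$ be attained, i.e.\ that the up-set $\{t\in\R\ |\ \tLambda_1\cleq\tphi_\beta^{t}\cdot\tLambda_2\}$ be closed. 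This is \emph{not} what strong orderability gives you: strong orderability is the antisymmetry of $\cleq$, which is a different property from closedness of the relation, and neither the present paper nor Proposition \ref{thm:legendrian specsel} establishes such closedness (spectrality of $\ell_+^\beta$ gives an intersection point at the selected time, not the order relation at that time). So "once it is granted" is precisely the issue: it is not granted.

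The paper's proof is structured specifically to avoid this. It first treats the case $\spec^\alpha(\tphi)\cap T\Z=\emptyset$, where $C_+^\alpha(\tphi)<NT$ \emph{strictly}, so that only the trivial up-set property is needed to obtain $\tPhi\cdot\tDelta\cleq\tphi_\beta^{NT}\cdot\tDelta$ (and a contradiction argument, again with strict inequalities, pins down the floor of $C_-^\alpha(\tphi^{-1})$). The general case is then reduced to this one by perturbing $\tphi$ to $\tphi_\alpha^{-\varepsilon_p}\cdot\tphi$ using the nowhere-density of $\spec^\alpha(\tphi)$ together with continuity and monotonicity of the selectors. Your argument can be repaired either by adopting this perturbation, or by replacing $nT$ with $nT+\varepsilon$ throughout and controlling the conjugate $\tPhi^{-1}\cdot\tphi_\beta^{\varepsilon}\cdot\tPhi\cleq\tphi_\beta^{C\varepsilon}$ via the conformal factor of $\Pi(\tPhi)$ before letting $\varepsilon\to 0$; but as written the proof rests on an unjustified closedness assertion.
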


\begin{proof}
    Let $N\in\Z$ such that $NT:=\left\lceil C_+^\alpha(\tphi)\right\rceil_T$.

\textbf{First case:} Suppose that $\spec^\alpha(\tphi)\cap T\Z=\emptyset$. Thus $C_+^\alpha(\tphi)<NT$ and denoting by $\tPhi=\widetilde{G_\alpha}(\tphi)\in\widetilde{\conto}(G(M),\Xi_\alpha)$  we have the following series of implications 
\[\tPhi\cdot\tDelta\cleq\tphi_{\beta}^{NT}\cdot \tDelta\quad \Rightarrow\quad   \tDelta\cleq \tPhi^{-1}\cdot\tphi_\beta^{NT}\cdot\tDelta = \tphi_\beta^{NT}\cdot \tPhi^{-1}\cdot\tDelta\quad \Rightarrow\quad  \tphi_\beta^{-NT}\cdot \tDelta\cleq \tPhi^{-1}\cdot\tDelta.\]
Therefore $\left\lfloor C_-^\alpha(\tphi^{-1})\right\rfloor_T \geq -NT$. If we suppose by contradiction that $\left\lfloor C_-^\alpha(\tphi^{-1})\right\rfloor_T > -NT$ then $C_-^\alpha(\tphi^{-1})>(1-N)T$ since $\spec^\alpha(\tphi^{-1})\cap T\Z=\emptyset$. This would then imply the following series of implications \[\tphi_\beta^{(1-N)T}\cdot \tDelta \cleq \tPhi^{-1}\cdot \tDelta\ \Rightarrow \ \tPhi\cdot \tphi_\beta^{(1-N)T}\cdot \tDelta \cleq \tDelta\ \Rightarrow\ \tphi_\beta^{(1-N)T}\cdot \tPhi\cdot \tDelta\cleq \tDelta\ \Rightarrow\ \tPhi\cdot \tDelta\cleq \tphi_\beta^{(N-1)T}\cdot \tDelta\]
which is a contradiction since $C_+^\alpha(\tphi)>N-1$. Hence, we deduce, in this case, that $\left\lfloor C_-^\alpha(\tphi^{-1})\right\rfloor_T=-NT=-\left\lceil C_+^\alpha(\tphi)\right\rceil_T$, which proves the Lemma.

\textbf{Second case:} Suppose now $\spec^\alpha(\tphi)\cap T\Z\ne\emptyset$. Since $\spec^\alpha(\tphi)$ is nowhere dense by Lemma 2.11 in \cite{albers}, there exists a decreasing sequence of positive real numbers $(\varepsilon_p)_{p\in\N}$ going to $0$ so that, denoting by $\tphi_p:=\tphi_\alpha^{-\varepsilon_p} \cdot\tphi$ for all $p\in\N$, $\spec^\alpha(\tphi_p)\cap T\Z=\emptyset$.
Thus by the first part of the proof $\left\lceil C_+^\alpha(\tphi_p)\right\rceil_T=-\left\lfloor C_-^\alpha(\tphi_p^{-1})\right\rfloor_T$, for all $p\in\N$. By continuity and monotonicity of the selectors, for $p$ big enough $\left\lceil C_+^\alpha(\tphi_p)\right\rceil_T=\left\lceil C_+^\alpha(\tphi)\right\rceil_T$ and $\left\lfloor C_-^\alpha(\tphi_p^{-1})\right\rfloor_T=\left\lfloor C_-^\alpha(\tphi^{-1})\right\rfloor_T$ which concludes the proof. 
\end{proof}

\begin{lem}\label{lem:inegalite triangulaire}
$C^\alpha_+(\tpsi\cdot\tphi)\leq C_+^\alpha(\tpsi)+\left\lceil C_+^\alpha(\tphi)\right\rceil_T$ and $C^\alpha_-(\tpsi\cdot\tphi)\geq C_-^\alpha(\tpsi)+\left\lfloor C_-^\alpha(\tphi)\right\rfloor_T$ for all $\tphi,\tpsi\in\widetilde{\conto}(M,\xi)$
\end{lem}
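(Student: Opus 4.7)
The plan is to combine property~4 of Theorem~\ref{thm:specselstrongord} with the centrality of the Reeb loops $\tphi_\beta^{NT}$ in $\widetilde{\conto}(G(M),\Xi_\alpha)$ that is noted at the start of Section~\ref{se:preuve thm2}. The rough picture is that property~4 already gives the triangle-type bound, but with an ``error'' term $\ell_+^\gamma$ taken with respect to the \emph{pulled-back} contact form $\gamma:=\Pi(\widetilde{G_\alpha}(\tpsi))^*\beta$; the $T$-periodicity of the Reeb flow is used to convert this $\gamma$-quantity into the integer multiple of $T$ that ceilings $C_+^\alpha(\tphi)$. I will present the $C_+^\alpha$ case; the $C_-^\alpha$ inequality is symmetric and uses the corresponding halves of the relevant properties.

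First I would set $\tPhi:=\widetilde{G_\alpha}(\tphi)$, $\tPsi:=\widetilde{G_\alpha}(\tpsi)$ and $\gamma:=\Pi(\tPsi)^*\beta$. Applying property~4 of Theorem~\ref{thm:specselstrongord} to the product $\tpsi\cdot\tphi$ yields
\[
C_+^\alpha(\tpsi\cdot\tphi)\leq C_+^\alpha(\tpsi)+\ell_+^{\gamma}(\tPhi\cdot\tDelta,\tDelta),
\]
so it suffices to show $\ell_+^{\gamma}(\tPhi\cdot\tDelta,\tDelta)\leq \lceil C_+^\alpha(\tphi)\rceil_T$. Let $N\in\Z$ be such that $NT=\lceil C_+^\alpha(\tphi)\rceil_T$. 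From $C_+^\alpha(\tphi)=\ell_+^\beta(\tPhi\cdot\tDelta,\tDelta)\leq NT$, one extracts (exactly as in the proof of Lemma~\ref{lem:poincare}) the relation
\[
\tPhi\cdot\tDelta\cleq\tphi_\beta^{NT}\cdot\tDelta.
\]

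Next I would multiply this relation on the left by $\tPsi$ and exploit invariance of $\cleq$, then use the centrality of $\tphi_\beta^{NT}$ in $\widetilde{\conto}(G(M),\Xi_\alpha)$ to commute $\tPsi$ past $\tphi_\beta^{NT}$, obtaining
\[
\tPsi\cdot\tPhi\cdot\tDelta\cleq\tPsi\cdot\tphi_\beta^{NT}\cdot\tDelta=\tphi_\beta^{NT}\cdot\tPsi\cdot\tDelta.
\]
By the very definition of $\ell_+^\beta$ this means $\ell_+^\beta(\tPsi\cdot\tPhi\cdot\tDelta,\tPsi\cdot\tDelta)\leq NT$, and by property~7 of Proposition~\ref{thm:legendrian specsel} the left-hand side equals $\ell_+^{\gamma}(\tPhi\cdot\tDelta,\tDelta)$. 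Combining with the first step closes the $C_+^\alpha$ inequality.

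There is no serious obstacle; the argument is essentially bookkeeping once one identifies centrality of $\tphi_\beta^{NT}$ as the mechanism that absorbs the discrepancy between the $\beta$-selector of $\tphi$ and the $\gamma$-selector appearing in property~4. The only mild subtlety is the passage from $C_+^\alpha(\tphi)\leq NT$ to the honest inequality $\tPhi\cdot\tDelta\cleq\tphi_\beta^{NT}\cdot\tDelta$ (rather than only up to $\varepsilon$), which is routine in the strongly orderable setting and is already used implicitly in the proof of Lemma~\ref{lem:poincare}; if one wishes to avoid it, one can carry the whole argument with $NT+\varepsilon$ and pass to the limit using continuity of the selectors. For the $C_-^\alpha$ statement, I would repeat the same three-line chain verbatim, reversing every $\cleq$ and replacing ceilings by floors, using the second half of property~4 of Theorem~\ref{thm:specselstrongord} together with the $\ell_-^\beta$ versions of properties~3 and~7 of Proposition~\ref{thm:legendrian specsel}.
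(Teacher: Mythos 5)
Your argument is essentially the paper's. The core mechanism is identical: from $\left\lceil C_+^\alpha(\tphi)\right\rceil_T=NT$ extract $\tPhi\cdot\tDelta\cleq\tphi_\beta^{NT}\cdot\tDelta$, multiply by $\tPsi$, and commute $\tPsi$ past the central loop $\tphi_\beta^{NT}$. The only cosmetic difference is the finish: the paper chains directly $\tphi_\beta^{NT}\cdot\tPsi\cdot\tDelta\cleq\tphi_\beta^{NT+s+\varepsilon}\cdot\tDelta$ with $s:=C_+^\alpha(\tpsi)$ and reads off $\ell_+^\beta(\tPsi\cdot\tPhi\cdot\tDelta,\tDelta)$, whereas you route through property 4 of Theorem \ref{thm:specselstrongord} and the conformal-invariance property 7 of Proposition \ref{thm:legendrian specsel}; these are interchangeable.

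The one point to fix is your treatment of the boundary case $C_+^\alpha(\tphi)=NT$. Your fallback of ``carrying the whole argument with $NT+\varepsilon$'' does not go through as written: $\tphi_\beta^{NT+\varepsilon}$ is \emph{not} central in $\widetilde{\conto}(G(M),\Xi_\alpha)$ (only integer multiples of the period are), so after commuting you are left with $\tphi_\beta^{NT}\cdot\tPsi\cdot\tphi_\beta^{\varepsilon}\cdot\tDelta=\tphi_\beta^{NT}\cdot(\tPsi\tphi_\beta^{\varepsilon}\tPsi^{-1})\cdot\tPsi\cdot\tDelta$, and the positive factor $\tPsi\tphi_\beta^{\varepsilon}\tPsi^{-1}$ pushes the estimate in the wrong direction; one would have to additionally dominate it by $\tphi_\beta^{C\varepsilon}$ using a bound on the conformal factor of $\Pi(\tPsi)$ before letting $\varepsilon\to 0$. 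The paper instead perturbs $\tphi$ rather than the translation: if $\spec^\alpha(\tphi)\cap T\Z=\emptyset$ then $C_+^\alpha(\tphi)<NT$ strictly and the relation $\tPhi\cdot\tDelta\cleq\tphi_\beta^{NT}\cdot\tDelta$ is honest; otherwise, since the spectrum is nowhere dense, one replaces $\tphi$ by $\tphi_p:=\tphi_\alpha^{-\varepsilon_p}\cdot\tphi$ with $\spec^\alpha(\tphi_p)\cap T\Z=\emptyset$, applies the first case, and passes to the limit using continuity \emph{and} monotonicity of the selectors (monotonicity is what guarantees $\left\lceil C_+^\alpha(\tphi_p)\right\rceil_T\leq\left\lceil C_+^\alpha(\tphi)\right\rceil_T$, since the ceiling is not continuous). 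With that substitution your proof is complete.
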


\begin{proof}
    We prove the inequality only for $C_+^\alpha$ since the proof for $C_-^\alpha$ is similar. Let $N\in\Z$ such that $NT:=\left\lceil C_+^\alpha(\tphi)\right\rceil_T$, $s:=C_+^\alpha(\tpsi)$ and $\varepsilon>0$.

    \textbf{First case:} Suppose $\spec^\alpha(\tphi)\cap T\Z =\emptyset$. Thus $C_+^\alpha(\tphi)<NT$, and denoting by $\tPhi:=\widetilde{G_\alpha}(\tphi),\tPsi:=\widetilde{G_\alpha}(\tpsi)\in\widetilde{\conto}(G(M),\Xi_\alpha)$ we have: $\tPhi\cdot\tDelta\cleq\tphi_\beta^{NT}\cdot\tDelta$. By multiplying both sides by $\tPsi:=\widetilde{G_\alpha}(\tpsi)\in\widetilde{\conto}(N,\Xi_\alpha)$ we get \[\tPsi\cdot\tPhi\cdot\tDelta\cleq \tPsi\cdot\tphi_\beta^{NT}\cdot\tDelta=\tphi_\beta^{NT}\cdot\tPsi\cdot\tDelta\cleq \tphi_\beta^{NT}\cdot\tphi_\beta^{s+\varepsilon}\cdot\tDelta=\tphi_\beta^{NT+s+\varepsilon}\cdot\tDelta.\] Since $\varepsilon$ can be choosen arbitrarily small, we get the desired inequality $C^\alpha_+(\tpsi\cdot\tphi)\leq C_+^\alpha(\tpsi)+\left\lceil C_+^\alpha(\tphi)\right\rceil_T$. 
    
    \textbf{Second case:} Suppose $\spec^\alpha(\tphi)\cap T\Z\ne\emptyset$. As in the proof of Lemma \ref{lem:poincare} since $\spec^\alpha(\tphi)$ is nowhere dense, one can find a decreasing sequence of positive real numbers $(\varepsilon_p)_{p\in\N}$ going to $0$ so that denoting by $\tphi_p:=\tphi_\alpha^{-\varepsilon_p}\cdot\tphi$ for all $p\in\N$, we have $\spec^\alpha(\tphi_p)\cap T\Z=\emptyset$. Hence, using the first part of the proof, $C_+^\alpha(\tpsi\cdot\tphi_p)\leq C_+^\alpha(\tpsi)+\left\lceil C_+^\alpha(\tphi_p)\right\rceil_T $, and we conclude by using the continuity and monotonicity of the selectors. 
\end{proof}

\begin{lem}\label{lem:inv par conj}
    Let $C : \widetilde{\conto}(M,\xi)\to\R$ be a continuous map so that for any $\tpsi,\tphi\in\widetilde{\conto}(M,\xi)$
    \begin{enumerate}[1.]
        \item $C(\tphi)\in\spec^\alpha(\tphi)$
        \item $\tphi\cleq\tpsi$ implies that $C(\tphi)\leq C(\tpsi)$. 
    \end{enumerate}
    Then 
    $\left\lceil C(\tpsi\cdot \tphi\cdot\tpsi^{-1})\right\rceil_T=\left\lceil C(\tphi)\right\rceil_T$ and $\left\lfloor C(\tpsi\cdot \tphi\cdot\tpsi^{-1})\right\rfloor_T=\left\lfloor C(\tphi)\right\rfloor_T$.
\end{lem}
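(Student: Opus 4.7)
The plan is to proceed in four steps, treating only the ceiling identity since the floor statement follows by the symmetric argument, using $\tphi_\alpha^{+\epsilon}$ in place of $\tphi_\alpha^{-\epsilon}$ and the right-continuity of $\lfloor\cdot\rfloor_T$ in place of the left-continuity of $\lceil\cdot\rceil_T$.

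First I would derive a Reeb-shift property for $C$: since $\varphi_\alpha^s$ preserves $\alpha$, one has $\spec^\alpha(\tphi_\alpha^s\tphi)=s+\spec^\alpha(\tphi)$, so by (1) the continuous map $s\mapsto C(\tphi_\alpha^s\tphi)-s$ takes values in the closed nowhere dense set $\spec^\alpha(\tphi)\subset\R$ (Lemma 2.11 in \cite{albers}); such a set is totally disconnected, and continuous maps from the connected set $\R$ into a totally disconnected space are constant, hence $C(\tphi_\alpha^s\tphi)=s+C(\tphi)$. Next I would establish the stability of integer multiples of $T$ under conjugation: $NT\in\spec^\alpha(\tpsi\tphi\tpsi^{-1})\iff NT\in\spec^\alpha(\tphi)$ for every $N\in\Z$. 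The $T$-periodicity of $\spec^\alpha(\cdot)$ reduces this to $N=0$; the bijection $y=\psi(x)$ identifies fixed points of $\psi\varphi\psi^{-1}$ with those of $\varphi$, and a direct chain-rule computation with the conformal factor of $\psi$ shows that $(\psi\varphi\psi^{-1})^*\alpha|_y=\alpha|_y$ iff $\varphi^*\alpha|_x=\alpha|_x$.

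With these ingredients I would treat first the case $\spec^\alpha(\tphi)\cap T\Z=\emptyset$. Setting $\tpsi_s:=[(\psi_{st})_{t\in[0,1]}]$ for $s\in[0,1]$ and $\tau_s:=\tpsi_s\tphi\tpsi_s^{-1}$, the stability step yields $\spec^\alpha(\tau_s)\cap T\Z=\emptyset$, so the continuous function $f(s):=C(\tau_s)$ avoids $T\Z$; by connectedness its image lies in one component $(NT,(N+1)T)$ of $\R\setminus T\Z$, and both $\lceil f\rceil_T$ and $\lfloor f\rfloor_T$ are constant on $[0,1]$. The general case I would settle by approximation from below: the nowhere density of $\spec^\alpha(\tphi)$ furnishes a sequence $\epsilon_k\searrow 0^+$ with $\epsilon_k\notin\spec^\alpha(\tphi)$, and by $T$-periodicity the spectrum of $\tphi_k:=\tphi_\alpha^{-\epsilon_k}\tphi$ is disjoint from $T\Z$, so the previous case applies and gives $\lceil C(\tpsi\tphi_k\tpsi^{-1})\rceil_T=\lceil C(\tphi_k)\rceil_T$. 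The shift property provides $C(\tphi_k)=C(\tphi)-\epsilon_k$, while $\tphi_k\cleq\tphi$ together with the conjugation-invariance of $\cleq$ (conjugation by $\psi$ sends non-negative contact isotopies to non-negative ones by pushing forward vector fields) and property (2) force $C(\tpsi\tphi_k\tpsi^{-1})\le C(\tpsi\tphi\tpsi^{-1})$, with equality in the limit by the continuity of $C$. Both sides of the approximate equality thus tend to $C(\tphi)$ and $C(\tpsi\tphi\tpsi^{-1})$ from below, and the left-continuity of $\lceil\cdot\rceil_T$ promotes the equality to the limit.

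The main obstacle I anticipate is the stability step: $\psi$ need neither commute with the Reeb flow nor satisfy $\psi^*\alpha=\alpha$, yet the contributions of its conformal factor must cancel exactly at a fixed point of $\varphi$. Once that cancellation is verified, the remainder is a bookkeeping argument monitoring the one-sided semicontinuity of $\lceil\cdot\rceil_T$ and $\lfloor\cdot\rfloor_T$ along the $\cleq$-monotone Reeb-shift perturbations $\tphi_\alpha^{\mp\epsilon_k}\tphi$.
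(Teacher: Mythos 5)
Your proposal is correct and follows essentially the same two-step strategy as the paper: first treat the case $\spec^\alpha(\tphi)\cap T\Z=\emptyset$ by running the continuous path $s\mapsto C(\tpsi_s\cdot\tphi\cdot\tpsi_s^{-1})$ along the conjugating isotopy, then handle the general case by perturbing with small Reeb shifts chosen outside the nowhere dense spectrum and passing to the limit via monotonicity, continuity, and the one-sided continuity of $\lceil\cdot\rceil_T$ and $\lfloor\cdot\rfloor_T$. The only differences are cosmetic — you spell out the conformal-factor cancellation showing that $\spec^\alpha\cap T\Z$ is conjugation-stable (which the paper asserts from $T$-periodicity without detail), you derive the exact shift identity $C(\tphi_\alpha^s\cdot\tphi)=s+C(\tphi)$ where the paper only needs monotonicity plus continuity, and you approximate one-sidedly per identity rather than sandwiching with $\tphi_p^-\cleq\tphi\cleq\tphi_p^+$.
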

The first equality of Lemma \ref{lem:inv par conj} was already proved in \cite[Lemma 2.2]{arlove2024contactnonsqueezingvariousclosed}. The second equality can be proven exactly in the same way. For completeness we present a proof of it below. 
\begin{proof}
    Let $(\psi_t)\subset\conto(M,\xi)$ be a contact isotopy representing $\tpsi$ and denote by $\tpsi_s:=[(\psi_{ts})_{t\in[0,1]}]$ for all $s\in[0,1]$. First, suppose that $\spec^\alpha(\tphi)\cap T\Z=\emptyset$. Since the Reeb flow is $T$-periodic it implies that $\spec^\alpha(\tpsi_s\cdot\tphi\cdot\tpsi_s^{-1})\cap T\Z=\emptyset$ for all $s\in[0,1]$. Therefore, by continuity of the map $s\mapsto C(\psi_s\cdot\tphi\cdot\tpsi_s^{-1})$ we deduce the desired equalities  $\left\lceil C(\tpsi\cdot \tphi\cdot\tpsi^{-1})\right\rceil_T=\left\lceil C(\tphi)\right\rceil_T$ and $\left\lfloor C(\tpsi\cdot \tphi\cdot\tpsi^{-1})\right\rfloor_T=\left\lfloor C(\tphi)\right\rfloor_T$. Let us now turn to the case where $\spec^\alpha(\tphi)\cap T\Z\ne\emptyset$. Since $\spec^\alpha(\tphi)$ is nowhere dense (Lemma 2.11 \cite{albers}), there exists a positive sequence of numbers $(\varepsilon_p)_{p\in\N}$ that goes to $0$ so that $\spec^\alpha(\tphi_p^\pm)\cap T\Z=\emptyset $ where $\tphi_p^+:=\tphi_\alpha^{\varepsilon_p}\cdot \tphi$ and $\tphi_p^-:=\tphi_\alpha^{-\varepsilon_p}\cdot\tphi$. Thus by the first part of the proof we deduce that $\left\lceil C(\tpsi\cdot\tphi_p^-\cdot\tpsi^{-1})\right\rceil_T=\left\lceil C( \tphi_p^-)\right\rceil_T\text{ and } \left\lfloor C(\tpsi\cdot \tphi_p^+\cdot\tpsi^{-1})\right\rfloor_T=\left\lfloor C(\tphi_p^+)\right\rfloor_T$. Moreover, since $\tphi_p^-\cleq\tphi\cleq\tphi_p^+$ and both sequences $(\tphi_p^\pm)_{p\in\N}$ converge to $\tphi$, by monotonicity and continuity of $C$ we deduce the desired equalities.
\end{proof}

\section{Proof of Corollary \ref{cor:1}, Corollary \ref{cor:2} and Corollary \ref{cor:3}}\label{se:preuve des corollaires}

In this Section \ref{se:preuve des corollaires} we prove the corollaries stated in Section \ref{se:geometry} about the geometry of $\widetilde{\conto}(M,\xi)$. Therefore, in Section \ref{se:preuve des corollaires} $(M,\xi)$ is a closed strongly orderable contact manifold and $\alpha$ is a contact form supporting $\xi$ whose Reeb flow is $T$-periodic. The proofs in Section \ref{se:preuve des corollaires} are very similar to the proofs given in Section 4 and Section 5 of \cite{allais2024spectral} (see also \cite{Arlove2023}).
\subsection{Proof of Corollary \ref{cor:1}}\label{se:preuve stably unbounded}
Recall that a conjugation invariant pseudo-norm $\nu :G\to\R_{\geq 0}$ on a group $G$ is by definition a map that satisfies:
\begin{enumerate}[1.]
    \item $\nu(\id)=0$
    \item $\nu(g^{-1})=\nu(g)$
    \item $\nu(g h)\leq \nu(g)+\nu(h)$
    \item $\nu(hgh^{-1})=\nu(g)$.
\end{enumerate}
A conjugation invariant pseudo-norm $\nu : G\to\R$ is said to be stably unbounded if there exists an element $g\in G$ such that the sequence $\left(\frac{\nu(g^k)}{k}\right)_{k\in\N}$ admits a positive limit. 
\begin{proof}[Proof of Corollary \ref{cor:1}]\

Since $C_-^\alpha\leq C_+^\alpha$ it implies that $\mu_{\mathrm{spec}}^\alpha=\max\left\{ \left\lceil C_+^\alpha\right\rceil_T, -\left\lfloor C_-^\alpha\right\rfloor_T\right\}\geq 0$. Consider $\tphi,\tpsi\in\widetilde{\conto}(M,\xi)$ arbitrary elements. 

\begin{enumerate}[1.]
    \item Thanks to Theorem \ref{thm:specselstrongord} $C_\pm^\alpha(\id)=0$ thus $\mu_{\mathrm{spec}}^\alpha(\id)=0$
    \item Thanks to Lemma \ref{lem:poincare} $\left\lceil C_+^\alpha(\tphi)\right\rceil_T =-\left\lfloor C_-^\alpha(\tphi^{-1})\right\rfloor_T$ and thus $\mu_{\mathrm{spec}}^\alpha(\tphi)=\mu_{\mathrm{spec}}^\alpha(\tphi^{-1})$. 
    \item Thanks to Lemma \ref{lem:inegalite triangulaire}
    \[\left\lceil C_+^\alpha(\tphi\cdot\tpsi)\right\rceil_T \leq \left\lceil C_+^\alpha(\tphi)\right\rceil_T + \left\lceil C_+^\alpha(\tpsi)\right\rceil_T\text{ and } \left\lfloor C_-^\alpha(\tphi\cdot\tpsi)\right\rfloor_T\geq \left\lfloor C_-^\alpha(\tphi)\right\rfloor_T +\left\lfloor C_-^\alpha(\tpsi)\right\rfloor_T.\] This implies that $\mu_{\mathrm{spec}}^\alpha(\tphi\cdot\tpsi)\leq \mu_{\mathrm{spec}}^\alpha(\tphi)+\mu_{\mathrm{spec}}^\alpha(\tpsi)$ 
    \item Since $\left\lceil C_\pm^\alpha(\tpsi\cdot \tphi\cdot\tpsi^{-1})\right\rceil_T=\left\lceil C_\pm^\alpha(\tphi)\right\rceil_T$ and $\left\lfloor C_\pm^\alpha(\tpsi\cdot \tphi\cdot\tpsi^{-1})\right\rfloor_T=\left\lfloor C_\pm^\alpha(\tphi)\right\rfloor_T$ by Lemma \ref{lem:inv par conj} we deduce that $\mu_\mathrm{spec}^\alpha(\tpsi\cdot\tphi\cdot\tpsi^{-1})=\mu_\mathrm{spec}^\alpha(\tphi)$. 
\end{enumerate}
Finally, to prove that $\mu_\spec^\alpha$ is stably unbounded, we use the fact $C_\pm^\alpha((\tphi_\alpha^T)^k)=C_\pm^\alpha(\tphi_\alpha^{Tk})=Tk$ for any $k\in \N_{>0}$ thanks to Theorem \ref{thm:specselstrongord}. Hence $\frac{\mu_\mathrm{spec}^\alpha((\tphi_\alpha^T)^k)}{k}=T$. 
\end{proof}

\subsection{Proof of Corollary \ref{cor:2}}\

Recall that $\mathcal{E}$ denotes the set of elements in $\widetilde{\conto}(M,\xi)$ that can be represented by embedded contact isotopies starting at the identity, where we say that a contact isotopy $(\varphi_t)$ is embedded if $0\notin\spec^\alpha(\varphi_t\circ\varphi_s^{-1})$ for all $s\ne t\in[0,1]$. Since we have assumed that the Reeb flow of $\alpha$ is $T$-periodic, if a contact isotopy $(\varphi_t)$ is embedded then $T\Z\cap\spec^\alpha(\varphi_t\circ\varphi_s^{-1})=\emptyset$ for all $s\ne t\in[0,1]$. Therefore we have the following Lemma. 
\begin{lem}\label{lem:discriminant}
    If $\tphi\in\mathcal{E}$, $C_+^\alpha(\tphi)<T$.
\end{lem}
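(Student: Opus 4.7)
The plan is to consider the one-parameter family of elements $\tphi_s := [(\varphi_{ts})_{t\in[0,1]}] \in \widetilde{\conto}(M,\xi)$ for $s \in [0,1]$, where $(\varphi_t)$ is an embedded contact isotopy representing $\tphi$. This family joins $\tphi_0 = \widetilde{\id}$ to $\tphi_1 = \tphi$, and the map $s \mapsto \tphi_s$ is continuous (the obvious lift of the continuous path $s \mapsto \varphi_s$ in $\conto(M,\xi)$). By continuity of $C_+^\alpha$ (Theorem \ref{thm:specselstrongord}), the real-valued function $s \mapsto C_+^\alpha(\tphi_s)$ is continuous on $[0,1]$, and by property~2 of Theorem \ref{thm:specselstrongord} it starts at $C_+^\alpha(\tphi_0) = 0$.

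I would then argue by contradiction. Suppose $C_+^\alpha(\tphi) \geq T$. Since the function $s \mapsto C_+^\alpha(\tphi_s)$ is continuous, takes the value $0$ at $s=0$, and a value $\geq T$ at $s=1$, the intermediate value theorem gives some $s_0 \in (0,1]$ with $C_+^\alpha(\tphi_{s_0}) = T$. Property~1 of Theorem \ref{thm:specselstrongord} then yields $T \in \spec^\alpha(\tphi_{s_0}) = \spec^\alpha(\varphi_{s_0})$, i.e.\ there exists $x \in M$ with $\varphi_{s_0}(x) = \varphi_\alpha^T(x)$ and $(\varphi_{s_0}^*\alpha)_x = \alpha_x$.

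The key observation is that $T$-periodicity of the Reeb flow means $\varphi_\alpha^T = \id_M$, so the equation above collapses to $\varphi_{s_0}(x) = x$ together with the conformal condition $(\varphi_{s_0}^*\alpha)_x = \alpha_x$. This says exactly that $0 \in \spec^\alpha(\varphi_{s_0}) = \spec^\alpha(\varphi_{s_0} \circ \varphi_0^{-1})$, which contradicts the embeddedness of $(\varphi_t)$ applied with $s = 0 \neq s_0 = t$. Hence $C_+^\alpha(\tphi) < T$.

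There is no significant obstacle: the argument is essentially a one-line application of the intermediate value theorem combined with property~1 of the selector and the $T$-periodicity of the Reeb flow. The only thing to keep straight is the bookkeeping translating $T \in \spec^\alpha(\varphi_{s_0})$ into the forbidden relation $0 \in \spec^\alpha(\varphi_{s_0} \circ \varphi_0^{-1})$, which is where the $T$-periodicity hypothesis is crucially used.
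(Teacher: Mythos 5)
Your proof is correct and follows essentially the same route as the paper: both reparametrize the embedded isotopy to get a continuous path $s\mapsto C_+^\alpha(\tphi_s)$ starting at $0$, apply the intermediate value theorem under the contradiction hypothesis, and use spectrality together with $T$-periodicity of the Reeb flow to turn $T\in\spec^\alpha(\varphi_{s_0})$ into the forbidden relation $0\in\spec^\alpha(\varphi_{s_0}\circ\varphi_0^{-1})$. The paper merely packages the last step as the prior observation that embeddedness plus $T$-periodicity forces $T\Z\cap\spec^\alpha(\varphi_t)=\emptyset$ for $t\in(0,1]$.
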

\begin{proof}
    Let $(\varphi_t)$ be a contact isotopy representing $\tphi$ such that $T\Z\cap \spec^\alpha(\varphi_t)=\emptyset$ for all $t\in(0,1]$ (which exists by the above discussion) and denote by $\tphi_s:=[(\varphi_{st})_{t\in[0,1]}]$. Suppose by contradiction that $C_+^\alpha(\tphi)=C_+^\alpha(\tphi_1)>T$. Since $C_+^\alpha(\tphi_0)=C_+^\alpha(\widetilde{\id})=0$ and the map $s\mapsto C_+^\alpha(\tphi_s)$ is continuous, it implies that there exists $s\in[0,1]$ such that $C_+^\alpha(\tphi_s)=T$, which contradicts the fact that $\spec^\alpha(\tphi_s)\cap T\Z=\emptyset$. 
\end{proof}

The triangle inequality property satisfied by $\left\lceil C_+^\alpha\right\rceil$ (see Lemma \ref{lem:inegalite triangulaire}) allows to deduce the following.

\begin{lem}\label{cor:disc}
    For any $\tphi\in\widetilde{\conto}(M,\xi)$ we have $ C_+^\alpha(\tphi) < T\nu_{\mathrm{disc}}(\tphi)$.
\end{lem}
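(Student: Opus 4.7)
The plan is to combine the two preceding lemmas via an induction on a shortest word decomposition. First I would dismiss the trivial cases: if $\nu_{\mathrm{disc}}(\tphi)=+\infty$ the inequality is vacuous, and if $\nu_{\mathrm{disc}}(\tphi)=0$ then $\tphi=\widetilde{\id}$ and both sides vanish (so the statement is substantive only for $\tphi\neq\widetilde{\id}$). Assuming therefore that $k:=\nu_{\mathrm{disc}}(\tphi)\geq 1$ is finite, I would write $\tphi=\tphi_1\cdots\tphi_k$ with each $\tphi_i\in\mathcal{E}$. A short preliminary remark is in order: $\mathcal{E}$ is symmetric, since if $(\varphi_t)$ is an embedded isotopy based at the identity then so is $(\varphi_{1-t}\circ\varphi_1^{-1})$, whence $\mathcal{E}^{-1}\subseteq\mathcal{E}$ and the factors of a shortest word decomposition may indeed be taken in $\mathcal{E}$ itself.

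Next I would apply Lemma \ref{lem:discriminant} to each factor to get $C_+^\alpha(\tphi_i)<T$, which in particular forces $\left\lceil C_+^\alpha(\tphi_i)\right\rceil_T\leq T$. Setting $\tpsi_j:=\tphi_1\cdots\tphi_j$, the triangle inequality of Lemma \ref{lem:inegalite triangulaire} yields
\[ C_+^\alpha(\tpsi_{j+1})=C_+^\alpha(\tpsi_j\cdot\tphi_{j+1})\leq C_+^\alpha(\tpsi_j)+\left\lceil C_+^\alpha(\tphi_{j+1})\right\rceil_T\leq C_+^\alpha(\tpsi_j)+T, \]
and iterating from $j=1$ up to $j=k-1$ gives
\[ C_+^\alpha(\tphi)=C_+^\alpha(\tpsi_k)\leq C_+^\alpha(\tphi_1)+(k-1)T<T+(k-1)T=kT=T\nu_{\mathrm{disc}}(\tphi), \]
the final strict inequality being preserved thanks to the strict bound $C_+^\alpha(\tphi_1)<T$ furnished by the first step.

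I expect no serious obstacle: the argument is essentially a direct assembly of the two previous lemmas. The only mild subtleties are to keep track of the ceiling $\left\lceil\cdot\right\rceil_T$ throughout the iteration (it is what allows a factor $\leq T$ to appear at every step), and to notice the symmetry of $\mathcal{E}$ so that the word-metric decomposition really takes place inside $\mathcal{E}$. The crucial point for later applications is that the strict inequality is preserved: this is exactly what will be needed in Corollary \ref{cor:2} to lower-bound $\nu_{\mathrm{disc}}(\tphi_\alpha^s)$ by $s/T$, and hence to obtain both the unboundedness of the discriminant norm and the geodesic property of the Reeb flow.
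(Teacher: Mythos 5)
Your proof is correct and follows exactly the paper's argument: write $\tphi$ as a product of $k=\nu_{\mathrm{disc}}(\tphi)$ factors in $\mathcal{E}$, bound each factor by Lemma \ref{lem:discriminant}, and iterate the triangle inequality of Lemma \ref{lem:inegalite triangulaire} to get $C_+^\alpha(\tphi)\leq C_+^\alpha(\tphi_1)+\sum_{i=2}^k\left\lceil C_+^\alpha(\tphi_i)\right\rceil_T<kT$. Your side remarks (the symmetry of $\mathcal{E}$, and the degenerate case $\tphi=\widetilde{\id}$ where the stated strict inequality reads $0<0$ --- an edge case the paper's proof also leaves implicit) are accurate and do not affect the substance.
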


\begin{proof}
    Let $k:=\nu_{\mathrm{disc}}(\tphi)$. By definition it implies that there exists $\tphi_1,\cdots,\tphi_k\in\mathcal{E}$ such that $\tphi=\prod\limits_{i=1}^k\tphi_i$. Thus, $ C_+^\alpha\left(\prod\limits_{i=1}^k\tphi_i\right) \leq C_+^\alpha(\tphi_1)+\sum\limits_{i=2}^k \left\lceil C_+^\alpha(\tphi_i)\right\rceil_T <Tk$, where the first inequality comes from Lemma \ref{lem:inegalite triangulaire} and the last one from Lemma \ref{lem:discriminant}. 
\end{proof}

\begin{proof}[Proof of Corollary \ref{cor:2}]
Since $C_+^\alpha(\tphi_\alpha^{TN})=TN$, for any $N\in\N$, we deduce by Lemma \ref{cor:disc} that $\nu_{\mathrm{disc}}$ is stably unbounded.\\

Let us now show that in the case where all the orbits of the Reeb flow has the same minimal period equals to $T$, then the Reeb flow is a geodesic. Fix $s\in\R_{>0}$. First, let us show that  \begin{equation}\label{eq:longueur discriminante}
\mathcal{L}_\mathrm{disc}((\varphi_\alpha^{ts})_{t\in[0,1]})\leq \left\lfloor \frac{s}{T}\right\rfloor +1.
\end{equation} Let $0<s_0:=\frac{s}{\left\lfloor s/T\right\rfloor +1}<\frac{s}{s/T}=T$. We claim that 
$\mathcal{L}_\mathrm{disc}(\left(\varphi_{\alpha}^{t}
\right)_{t\in[(i-1)s_0,is_0]})=1$ for all $i\in[1,\left\lfloor s/T\right\rfloor+1]\cap\N$. Indeed, if we suppose by contradiction that $\mathcal{L}_\mathrm{disc}(\left(\varphi_{\alpha}^{t}
\right)_{t\in[(i-1)s_0,is_0]})>1$ for some $i\in[1,\left\lfloor s/T\right\rfloor+1]\cap\N$, then it implies that there exists $t_1<t_2\in[(i-1)s_0,is_0] $ such that $0\in\spec^\alpha(\varphi_\alpha^{t_2-t_1})$. This implies the contradiction that the Reeb flow has a periodic orbit of period $0<t_2-t_1<T$ and proves our claim. Now,  the concatenation of the contact isotopies $\left(\varphi_{\alpha}^{t}
\right)_{t\in[(i-1)T_0,iT_0]}$, for $i\in[1,\left\lfloor s/T\right\rfloor+1]\cap\N$, being exactly $(\varphi_\alpha^{ts})_{t\in[0,1]}$ we get the desired inequality \eqref{eq:longueur discriminante}. Moreover, $\nu_\mathrm{disc}(\tphi_\alpha^s)> \frac{C_+^\alpha(\tphi_\alpha^s)}{T}$ by Lemma \ref{cor:disc}. Thus
\begin{equation}\label{eq:norm disc}
\nu_\mathrm{disc}(\tphi_\alpha^s)\geq \left\lfloor \frac{s}{T}\right\rfloor +1. 
\end{equation}
Combining \eqref{eq:longueur discriminante} and \eqref{eq:norm disc} we deduce that the Reeb flow is indeed a geodesic since $\nu_\mathrm{disc}(\tphi_\alpha^s)=\mathcal{L}_\mathrm{disc}((\varphi_\alpha^{ts})_{t\in[0,1]})$.\end{proof}

\subsection{Proof of Corollary \ref{cor:3}}
The proof of Corollary \ref{cor:3} will be very similar to the one of Corollary \ref{cor:2}. We start with the following. 
\begin{lem}\label{cor:osc}
    For any $\tphi\in\widetilde{\conto}(M,\xi)$ we have $C_+^\alpha(\tphi)<T\nu_{\mathrm{osc}}(\tphi)$. 
\end{lem}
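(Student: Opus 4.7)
The plan is to mimic the proof of Lemma \ref{cor:disc}, replacing the discriminant decomposition by an oscillation decomposition and controlling the selector on each piece according to its sign. Fix $k:=\nu_\mathrm{osc}^+(\tphi)$ and choose embedded monotone isotopies $(\varphi_t^1),\ldots,(\varphi_t^N)$ starting at the identity realizing this minimum, exactly $k$ of them positive, with $[(\varphi_t^1\circ\cdots\circ\varphi_t^N)]=\tphi$. Set $\tphi_i:=[(\varphi_t^i)]\in\widetilde{\conto}(M,\xi)$. Iterating the triangle inequality from Lemma \ref{lem:inegalite triangulaire} gives
\[
C_+^\alpha(\tphi) \;\leq\; C_+^\alpha(\tphi_1)+\sum_{i=2}^N \left\lceil C_+^\alpha(\tphi_i)\right\rceil_T.
\]

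The key step is to control $C_+^\alpha$ on each embedded monotone factor. For a positive embedded $\tphi_i$, Lemma \ref{lem:discriminant} yields $C_+^\alpha(\tphi_i)<T$, while positivity gives $\widetilde{\id}\cleq\tphi_i$ and hence by monotonicity of the selector (Theorem \ref{thm:specselstrongord}) we have $C_+^\alpha(\tphi_i)\geq 0$. Moreover the embedding condition forces $0\notin\spec^\alpha(\Pi(\tphi_i))$, so $C_+^\alpha(\tphi_i)\neq 0$, which combined with the previous bounds gives $0<C_+^\alpha(\tphi_i)<T$ and therefore $\left\lceil C_+^\alpha(\tphi_i)\right\rceil_T=T$. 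For a negative embedded $\tphi_i$ the symmetric argument yields $C_+^\alpha(\tphi_i)<0$ and thus $\left\lceil C_+^\alpha(\tphi_i)\right\rceil_T\leq 0$. Plugging these into the inequality above, each of the positive indices $i\in\{2,\ldots,N\}$ contributes $T$ and each negative index contributes at most $0$; the first term $C_+^\alpha(\tphi_1)$ itself is strictly bounded by $T$ if $\tphi_1$ is positive and by $0$ if it is negative. A case split on the sign of $\tphi_1$ then yields the strict bound
\[
C_+^\alpha(\tphi)\;<\;Tk\;=\;T\nu_\mathrm{osc}^+(\tphi)\;\leq\;T\nu_\mathrm{osc}(\tphi),
\]
since $\nu_\mathrm{osc}(\tphi)=\nu_\mathrm{osc}^+(\tphi)+\nu_\mathrm{osc}^+(\tphi^{-1})$ and $\nu_\mathrm{osc}^+(\tphi^{-1})\geq 0$.

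The main technical point to handle carefully is the preservation of the strict inequality: one must keep the first factor un-ceilinged in the triangle inequality, and establish strict positivity (resp.\ negativity) of the selector on each positive (resp.\ negative) embedded monotone piece. The latter is exactly where the embedding condition plays its role, preventing $0$ from belonging to $\spec^\alpha(\Pi(\tphi_i))$, combined with continuity and monotonicity of the selector starting from $C_+^\alpha(\widetilde{\id})=0$.
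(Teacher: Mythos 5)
Your proof is correct and follows essentially the same route as the paper: the oscillation decomposition into $N$ embedded monotone pieces, the iterated triangle inequality of Lemma \ref{lem:inegalite triangulaire}, and the bound $C_+^\alpha<T$ on embedded pieces from Lemma \ref{lem:discriminant}. The only (cosmetic) difference is that the paper first uses monotonicity and bi-invariance of $\cleq$ to discard the non-positive factors, replacing $\tphi$ by the product of the $k$ positive ones, whereas you keep all $N$ factors and check via spectrality and monotonicity that each non-positive embedded piece contributes $\left\lceil C_+^\alpha(\tphi_i)\right\rceil_T\leq 0$; both yield the same strict bound $C_+^\alpha(\tphi)<Tk$.
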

\begin{proof}
    Let $k:=\nu^+_{\mathrm{osc}}(\tphi)\leq \nu_\mathrm{osc}(\tphi)$. This means that there exist $N$ monotone and embedded contact isotopies $(\varphi_t^1), \cdots, (\varphi_t^N) $ starting at the identity, with exactly $k$ of them being positive such that $\tphi=\prod\limits_{i=1}^N\tphi_i$, where $\tphi_i:=[(\varphi_t^i)]$. Thus $\tphi\cleq\prod\limits_{i=1}^k\tphi_{\sigma(i)}$ where $\sigma : \N\cap [1,k]\to \N$ is the increasing map so that $\id\cleq \tphi_{\sigma(i)}$. Therefore, 
    \[C_+^\alpha(\tphi)\leq C_+^\alpha\left(\prod\limits_{i=1}^k \tphi_{\sigma(i)}\right)\leq C_+^\alpha(\tphi_{\sigma(1)})+\sum\limits_{i=2}^k\left\lceil C_+^\alpha(\tphi_{\sigma(i)})\right\rceil_T<Tk \]
    where we deduce the first inequality by monotonicity of $C_+^\alpha$, the second one by Lemma \ref{lem:inegalite triangulaire} and the last one by Lemma \ref{lem:discriminant}.
\end{proof}

\begin{proof}
   Since $C_+^\alpha(\tphi_\alpha^{TN})=TN$ for any $N\in\N$ we deduce by Lemma \ref{cor:osc} that $\nu_{\mathrm{osc}}$ is stably unbounded. \\

   Let us now show that in the case when all the orbits of the Reeb flow have the same minimal period equals to $T$, then the Reeb flow is a geodesic. Fix $s\in\R_{>0}$. Since $(\varphi_\alpha^{ts})_{t\in[0,1]}$ is a positive contact isotopy $\mathcal{L}_\mathrm{osc}((\varphi_\alpha^{ts})_{t\in[0,1]})=\mathcal{L}_\mathrm{osc}^+((\varphi_\alpha^{ts})_{t\in[0,1]})$ and one can argue exactly as in the proof of Corollary \ref{cor:2} to show that \begin{equation}\label{eq:longueur osc}
       \mathcal{L}_\mathrm{osc}((\varphi_\alpha^{ts})_{t\in[0,1]})\leq \left\lfloor \frac{s}{T}\right\rfloor +1.
   \end{equation}
   Moreover, by Lemma \ref{cor:osc} we have that $\nu_{\mathrm{osc}}(\tphi_\alpha^s)>\frac{C_+^\alpha(\tphi_\alpha^s)}{T}$ and so 
   \begin{equation}\label{eq:norm osc}
\nu_\mathrm{osc}(\tphi_\alpha^T)\geq \left\lfloor \frac{C_+^\alpha(\tphi_\alpha^s)}{T}\right\rfloor +1=\left\lfloor \frac{s}{T}\right\rfloor +1. 
\end{equation}
Combining \eqref{eq:longueur osc} and \eqref{eq:norm osc} we deduce that the Reeb flow is indeed a geodesic since $\nu_\mathrm{osc}(\tphi_\alpha^s)=\mathcal{L}_\mathrm{osc}((\varphi_\alpha^{ts})_{t\in[0,1]})$.\end{proof}

\section{Proof of the contact big fiber theorems}\label{sec:bft}

To prove Theorem \ref{thm:bft} (resp. Theorem \ref{thm:bft in lens space}), as in \cite{djordjević2025quantitativecontacthamiltoniandynamics,uljarević2025contactquasistatesapplications}, we use the spectral selector of Theorem \ref{thm:specselstrongord} (resp. Proposition \ref{prop:lens space}) to first construct a contact quasi-state and then deduce from it a contact quasi-measure. Let $j\geq 2$ be an integer and $\underline{w}:=(w_0,\cdots,w_n)$ a $n$-tuple of positive integers relatively prime to $j$. In the rest of this Section \ref{sec:bft} $(M,\xi)$ is either:
\begin{enumerate}[1.]
    \item  a closed strongly orderable contact manifold for which we fix an arbitrary contact form $\alpha$ supporting $\xi$
    \item or, the lens space $(L_j^{2n+1}(\underline{w}),\xi_j^{\underline{w}})$ endowed with its standard contact distribution and $\alpha:=\alpha_j^{\underline{w}}$ is the contact form described in Section \ref{sec:bft intro} whose Reeb flow is $T_{\underline{w}}$-periodic.
 \end{enumerate}
 Moreover the map $C_\alpha :\widetilde{\conto}(M,\xi)\to\R$ will denote either the map $c_{\alpha}$ of Proposition \ref{prop:lens space} when $(M,\xi)$ is the lens space or the map $C_+^\alpha$ of Theorem \ref{thm:specselstrongord} otherwise.
 

\subsection{Contact quasi-state}
Recall from the introduction that there exists an isomorphism $F_\alpha : \chi(M,\xi)\to C^\infty(M,\R)$ between contact vector fields and smooth functions given by $X\mapsto \alpha(X)$. To any $h\in C^\infty(M,\R)$ we denote by $\tphi_h$ the element of $\widetilde{\conto}(M,\xi)$ that can be represented by the contact isotopy $(\varphi_t)_{t\in[0,1]}$ starting at the identity and satisfying $\frac{d}{dt}\varphi_t=F_\alpha^{-1}(h)\circ\varphi_t$. We say that $h$ generates $\tphi_h$. Finally, we write $\mathrm{Supp}(h):=\overline{\{x\in M\ |\ h(x)\ne 0\}}$ for its support.

\begin{prop}\label{prop:contact quasi state}
 The map \[\zeta_\alpha : C^\infty(M,\R)\to \R\cup\{\pm\infty\}\quad \quad  h\mapsto\underset{k\to\infty}{\lim\inf} \frac{C_\alpha(\tphi_h^k)}{k} \] 
   satisfies the following for all $g,h\in C^\infty(M,\R)$ and $s\in\R$:
\begin{enumerate}[1.]
    \item (normalization) $\zeta_\alpha(s)=s$
    \item (monotonicity) if $g\leq h$ then $\zeta_\alpha(f)\leq \zeta_\alpha(g)$
      \item (triangle inequality) if $\{h,g\}_\alpha=\{1,h\}_\alpha=\{1,g\}_\alpha=0$ then $\zeta_\alpha(h+g)\leq \zeta_\alpha(h)+\zeta_\alpha(g)$
    \item (vanishing property) if $R_\alpha\cdot h=0$ and there exists $\varphi\in\conto(M,\xi)$ such that $\varphi(\mathrm{Supp}(h))\cap \mathrm{Supp}(h)=\emptyset$ then $\zeta_\alpha(h)=0$.
\end{enumerate}
\end{prop}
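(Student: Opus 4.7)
The four items are established in sequence using only the shift, monotonicity, and $\alpha$-strict triangle inequality properties common to $C_\alpha := C_+^\alpha$ from Theorem~\ref{thm:specselstrongord} and $C_\alpha := c_\alpha$ from Proposition~\ref{prop:lens space}. Normalization is immediate: the constant Hamiltonian $h = s$ has $X_s^\alpha = sR_\alpha$, so $\tphi_s^k = \tphi_\alpha^{ks}$, and the shift property gives $C_\alpha(\tphi_s^k) = ks$. For monotonicity, I rely on the standard fact that pointwise comparison $g \leq h$ of autonomous contact Hamiltonians implies $\tphi_g \cleq \tphi_h$ in the Eliashberg-Polterovich order; bi-invariance of $\cleq$ together with the autonomous identity $\tphi_h^k = \tphi_{kh}$ yields $\tphi_g^k \cleq \tphi_h^k$ for every $k$, and monotonicity of $C_\alpha$ then passes to the liminf.

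The triangle inequality uses the joint vanishing $\{h,g\}_\alpha = \{1,h\}_\alpha = \{1,g\}_\alpha = 0$ as follows. First, $\{1,h\}_\alpha = \ud h(R_\alpha) = 0$ (and similarly for $g$) means $h$ and $g$ are Reeb-invariant; a direct computation of the conformal factor of $(\varphi_h^t)$ shows it stays identically $0$, so $(\varphi_h^t)$, $(\varphi_g^t)$, and all their iterates are $\alpha$-strict. Second, $\{h,g\}_\alpha = 0$ translates to $[X_h^\alpha, X_g^\alpha] = 0$; the flows commute, giving $\tphi_{h+g}^k = \tphi_h^k \cdot \tphi_g^k$ in $\widetilde{\conto}(M,\xi)$. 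The $\alpha$-strict triangle inequality (property 6 of Theorem~\ref{thm:specselstrongord} or property 5 of Proposition~\ref{prop:lens space}) then yields $C_\alpha(\tphi_{h+g}^k) \leq C_\alpha(\tphi_h^k) + C_\alpha(\tphi_g^k)$. Setting $g = h$ shows the sequence $k \mapsto C_\alpha(\tphi_h^k)$ is subadditive; as it is bounded below by $-k\|h\|_\infty$ (by monotonicity), Fekete's lemma promotes $\zeta_\alpha(h) = \lim_k C_\alpha(\tphi_h^k)/k = \inf_k C_\alpha(\tphi_h^k)/k$ to a genuine, finite limit, and passing to the limit in the inequality gives $\zeta_\alpha(h+g) \leq \zeta_\alpha(h) + \zeta_\alpha(g)$.

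The vanishing property is more delicate. Under $R_\alpha \cdot h = 0$, the isotopy $(\varphi_h^t)$ is $\alpha$-strict, and because $\supp(X_h^\alpha) \subseteq \supp(h)$, the displacement hypothesis $\varphi(\supp(h)) \cap \supp(h) = \emptyset$ makes the contactomorphisms $\varphi_h^t$ and $\varphi \circ \varphi_h^s \circ \varphi^{-1}$ commute at every pair of times. Choosing a lift $\tpsi \in \widetilde{\conto}(M,\xi)$ of $\varphi$ and exhibiting an explicit concatenation of commuting isotopies, one lifts this commutativity to $\tphi_h$ and $\tpsi \tphi_h \tpsi^{-1}$ in the universal cover, from which one extracts the identity
\[
[\tphi_h, \tpsi]^k \;=\; \tphi_h^k \cdot \tpsi\, \tphi_h^{-k}\, \tpsi^{-1}.
\]
Applying $C_\alpha$ and invoking a conjugation-type bound---the mod-$T$ conjugation invariance of Lemma~\ref{lem:inv par conj} in the lens-space case (where the Reeb flow is $T_{\underline{w}}$-periodic), and the inequality in property 4 of Theorem~\ref{thm:specselstrongord} combined with the properties of the pulled-back Legendrian selector $\ell_\pm^{G_\alpha(\varphi)^*\beta_\alpha}$ in the general case---one bounds $C_\alpha(\tphi_h^k)$ independently of $k$, whence $\zeta_\alpha(h) \leq 0$. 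The same argument applied to $-h$ (which is displaceable and Reeb-invariant as well) gives $\zeta_\alpha(-h) \leq 0$, and combining with the already-established $\zeta_\alpha(h) + \zeta_\alpha(-h) \geq \zeta_\alpha(0) = 0$ (triangle inequality, whose hypotheses are trivially satisfied) forces $\zeta_\alpha(h) = 0$.

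The principal obstacle is the uniform bound on $C_\alpha(\tphi_h^k)$ in the strongly orderable case without Reeb-periodicity: here the conjugator $\tpsi$ need not be $\alpha$-strict, so property 7 of Theorem~\ref{thm:specselstrongord} does not apply and no mod-$T$ conjugation invariance is automatically available. This forces a detour through the product manifold $(G(M), \Xi_\alpha)$ and the Legendrian selectors on it, exploiting the finer estimate of property 4 and the properties of $\ell_\pm^\beta$ from Proposition~\ref{thm:legendrian specsel}---morally, displacement on $M$ must be translated into a displacement-type statement for the Legendrian $\Delta \subset G(M)$ that the Legendrian selectors can control.
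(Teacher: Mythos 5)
Items 1--3 of your proposal are correct and coincide with the paper's argument: normalization via $\tphi_s=\tphi_\alpha^s$, monotonicity via $g\leq h\Rightarrow\tphi_g^k\cleq\tphi_h^k$, and the triangle inequality via commutation of the flows, $\alpha$-strictness of $(\varphi_h^t)$, and Fekete's lemma (the paper's Lemma~\ref{lem:subadditive}). Your reduction of the vanishing property to the one-sided bound $\zeta_\alpha(h)\leq 0$ by playing $h$ against $-h$ is a nice observation, although the paper gets two-sided bounds directly.

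The gap is in how you obtain $\zeta_\alpha(h)\leq 0$. The commutator identity $[\tphi_h,\tpsi]^k=\tphi_h^k\cdot\tpsi\,\tphi_h^{-k}\,\tpsi^{-1}$ (valid, since the supports are disjoint for all times) together with conjugation invariance mod $T$ and the triangle inequalities does not produce a $k$-independent, or even sublinear, upper bound on $C_\alpha(\tphi_h^k)$: chasing the inequalities only yields tautologies or bounds on $C_\alpha([\tphi_h,\tpsi]^k)$, and nothing controls the growth of the selector on powers of the fixed element $[\tphi_h,\tpsi]$. In your sketch the displacement hypothesis enters only through commutativity of supports, which is too weak; the spectrality of $C_\alpha$ (property 1) is never used in this step, and that is precisely the missing mechanism. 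The paper's proof instead establishes that if $\varphi'$ displaces $\mathrm{Supp}(h)$ and $R_\alpha\cdot h=0$, then every translated point of $\Pi(\tphi_h^s\cdot\tphi')$ lies outside $\mathrm{Supp}(h)$, whence $\spec^\alpha(\tphi_h^s\cdot\tphi')\subset\spec^\alpha(\tphi')$ for all $s$ (Lemma~\ref{lem:spec}); continuity plus nowhere-density of the spectrum then forces $C_\alpha(\tphi_h^k\cdot\tphi)=C_\alpha(\tphi)$ (Lemma~\ref{lem:inegalite capacite-energie}). Writing $\tphi_h^k=(\tphi_h^k\cdot\tphi)\cdot\tphi^{-1}$ and applying the triangle inequality and its reversed form (Lemma~\ref{lem:inegalite triangulaire renversée}) sandwiches $C_\alpha(\tphi_h^k)$ between constants depending only on $\tphi$. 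This also resolves the difficulty you flag in the non-periodic strongly orderable case: the correction terms are $\ell_+^{\Phi^*\beta}(\tPhi^{-1}\cdot\tDelta,\tDelta)$, independent of $k$ because $(\Phi_h^k)^*\beta=\beta$, so no mod-$T$ conjugation invariance is needed. As written, your item 4 remains an unproven sketch, with the general case explicitly left open.
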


\begin{rem}\label{rem:finiteness}
    Note that the first and second property of $\zeta_\alpha$ stated in Proposition \ref{prop:contact quasi-measure} implies that $\zeta_\alpha$ takes finite values. Indeed, for any $h\in C^\infty(M,\R)$, denoting by $m_-:=\min h$, $m_+:=\max h$, we have $m_-=\zeta_\alpha(m_-)\leq\zeta_\alpha(h)\leq \zeta_\alpha(m)=m$.
\end{rem}


Before proving Proposition \ref{prop:contact quasi state} let us state and prove the four following Lemmas. 

\begin{lem}\label{lem:inegalite triangulaire renversée}
    Let $\tphi,\tpsi\in\widetilde{\conto}(M,\xi)$ then 
    \begin{enumerate}[1.]
        \item $C_\alpha(\tphi\cdot \tpsi)\geq C_\alpha(\tphi)-\left\lceil  C_\alpha(\tpsi^{-1})\right\rceil_{T_{\underline{w}}}$ if $(M,\xi)$ is the lens space $(L_j^{2n+1},\xi_j)$
        \item $C_\alpha(\tphi\cdot\tpsi)\geq C_\alpha(\tphi)-\ell_+^{(\Phi\Psi)^*\beta}(\tPsi^{-1}\cdot \tDelta,\tDelta)$ otherwise, where $\beta:=\beta_\alpha$, $\Phi:=\Pi(\widetilde{G_\alpha}(\tphi))$, $\Psi:=\Pi(\widetilde{G_\alpha}(\tpsi))$ and $\tPsi^{-1}:=\widetilde{G_\alpha}(\tpsi^{-1})$ (with the notations  of Section \ref{se:spectral selector intro} and Section \ref{se:def spec sel}). 
    \end{enumerate}
\end{lem}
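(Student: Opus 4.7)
The plan is to derive both bounds by applying the (forward) triangle-type inequalities already established for $C_\alpha$ to the trivial decomposition $\tphi = (\tphi\cdot\tpsi)\cdot\tpsi^{-1}$, and then rearranging. This is a standard way to turn a subadditivity estimate into a lower bound for the $C_\alpha$-value of a product, and it is the only mechanism available, since the triangle inequalities in Proposition \ref{prop:lens space}(4) and Theorem \ref{thm:specselstrongord}(4) bound $C_\alpha$ on a product from above.

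For the lens space, where $C_\alpha = c_\alpha$, I would apply property (4) of Proposition \ref{prop:lens space} to $\tphi_1 := \tphi\cdot\tpsi$ and $\tphi_2 := \tpsi^{-1}$ to get
\[C_\alpha(\tphi) \;=\; C_\alpha\bigl((\tphi\cdot\tpsi)\cdot\tpsi^{-1}\bigr) \;\leq\; C_\alpha(\tphi\cdot\tpsi) + \bigl\lceil C_\alpha(\tpsi^{-1})\bigr\rceil_{T_{\underline{w}}},\]
and then rearrange to obtain the first inequality.

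In the strongly orderable case, where $C_\alpha = C_+^\alpha$, I would apply property (4) of Theorem \ref{thm:specselstrongord} to the same decomposition. This yields
\[C_\alpha(\tphi) \;\leq\; C_\alpha(\tphi\cdot\tpsi) + \ell_+^{G_\alpha(\Pi(\tphi\cdot\tpsi))^{*}\beta}\bigl(\widetilde{G_\alpha}(\tpsi^{-1})\cdot\tDelta,\tDelta\bigr).\]
To match the form in the statement, I would then use that both $\Pi$ and $G_\alpha$ are group morphisms, so that $G_\alpha(\Pi(\tphi\cdot\tpsi)) = G_\alpha(\Pi(\tphi))\cdot G_\alpha(\Pi(\tpsi)) = \Phi\cdot\Psi$ and hence $G_\alpha(\Pi(\tphi\cdot\tpsi))^{*}\beta = (\Phi\Psi)^{*}\beta$. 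Since also $\widetilde{G_\alpha}(\tpsi^{-1}) = \tPsi^{-1}$ by definition, rearranging gives the second claim.

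I expect no real obstacle: the lemma is a formal consequence of triangle inequalities already recorded for $C_\alpha$. The only point worth verifying explicitly is that $G_\alpha$ is a group morphism (needed to identify the pullback form in the second case), but this is asserted in the introduction and follows from a direct computation with $\alpha$-conformal factors.
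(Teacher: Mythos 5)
Your proposal is correct and follows exactly the paper's own argument: both cases are obtained by applying the forward triangle inequality (Proposition \ref{prop:lens space}(4), resp.\ Theorem \ref{thm:specselstrongord}(4)) to the decomposition $\tphi=(\tphi\cdot\tpsi)\cdot\tpsi^{-1}$ and rearranging. The identification $G_\alpha(\Pi(\tphi\cdot\tpsi))=\Phi\Psi$ via the morphism properties of $\Pi$ and $G_\alpha$ is likewise the step the paper uses implicitly.
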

\begin{proof}
    \begin{enumerate}[1.]
\item Suppose $(M,\xi)$ is the lens space $(L_j^{2n+1}(\underline{w}),\xi_j^{\underline{w}})$. For any $\tphi,\tpsi\in\widetilde{\conto}(M,\xi)$, by Proposition \ref{prop:lens space}, $C_\alpha(\tphi)=C_\alpha((\tphi\cdot\tpsi)\cdot \tpsi^{-1})\leq C_\alpha(\tphi\cdot\tpsi)+ \left\lceil  C_\alpha(\tpsi^{-1})\right\rceil_{T_{\underline{w}}}. $ We thus deduce the desired inequality $C_\alpha(\tphi\cdot\tpsi)\geq C_\alpha(\tphi)- \left\lceil  C_\alpha(\tpsi^{-1})\right\rceil_{T_{\underline{w}}}$.
\item Suppose now that $(M,\xi)$ is a strongly orderable contact manifold which is not a lens space so that $C_\alpha=C_+^\alpha$. For any $\tphi,\tpsi\in\widetilde{\conto}(M,\xi)$, by Theorem \ref{thm:specselstrongord}, $C_+^\alpha(\tphi)=C_+^\alpha((\tphi\cdot\tpsi)\cdot\tpsi^{-1})\leq C_\alpha(\tphi\cdot\tpsi)+ \ell_+^{(\Phi\Psi)^*\beta}(\tPsi^{-1}\cdot\tDelta,\tDelta)$. Thus we get the desired inequality $C_\alpha(\tphi\cdot\tpsi)\geq C_\alpha(\tphi)-\ell_+^{(\Phi\Psi)^*\beta}(\tPsi^{-1}\cdot\tDelta,\tDelta)$. \qedhere
    \end{enumerate}
\end{proof}

Recall that for any $\varphi\in\conto(M,\xi)$ we define its support $\mathrm{Supp}(\varphi)$ to be the closure of $\{x\in M\ |\ \varphi(x)\ne x\}$. Moreover, we say that a contactomorphism $\varphi\in\conto(M,\xi)$ displaces a subset $A\subset M$ if $\varphi(A)\cap A=\emptyset. $

\begin{lem}\label{lem:spec}
    Let $\tphi,\tphi'\in\widetilde{\conto}(M,\xi)$. If $\Pi(\tphi')\in\conto(M,\xi)$ displaces the support of $\Pi(\tphi)$ and $\Pi(\tphi)\circ\varphi_\alpha^t=\varphi_\alpha^t\circ\Pi(\tphi)$ for all $t$ then $\spec^\alpha(\tphi\cdot\tphi')\subset \spec^\alpha(\tphi').$
\end{lem}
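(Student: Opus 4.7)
The plan is to unpack the definition of the $\alpha$-spectrum and then use the displacement hypothesis together with the commutation with the Reeb flow to rule out the only nontrivial case. Writing $\varphi := \Pi(\tphi)$ and $\varphi' := \Pi(\tphi')$, a real number $t \in \spec^\alpha(\tphi\cdot\tphi')$ provides a point $x \in M$ satisfying $\varphi(\varphi'(x)) = \varphi_\alpha^t(x)$ and $(\varphi\circ\varphi')^*\alpha|_x = \alpha_x$. The goal is to show that this same $x$ witnesses $t \in \spec^\alpha(\tphi')$, i.e. that $\varphi'(x) = \varphi_\alpha^t(x)$ and $(\varphi')^*\alpha|_x = \alpha_x$.

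The core step will be to show that $\varphi'(x) \notin \mathrm{Supp}(\varphi)$. First I would record that the commutation $\varphi\circ\varphi_\alpha^t = \varphi_\alpha^t\circ\varphi$ implies that $\mathrm{Supp}(\varphi)$ is $\varphi_\alpha^t$-invariant, since $\varphi(y)=y$ if and only if $\varphi(\varphi_\alpha^t(y)) = \varphi_\alpha^t(y)$, and taking closures preserves this. Next, assume for contradiction that $\varphi'(x) \in \mathrm{Supp}(\varphi)$. The displacement hypothesis $\varphi'(\mathrm{Supp}(\varphi))\cap\mathrm{Supp}(\varphi) = \emptyset$ is equivalent to $(\varphi')^{-1}(\mathrm{Supp}(\varphi))\cap\mathrm{Supp}(\varphi) = \emptyset$, so from $\varphi'(x)\in\mathrm{Supp}(\varphi)$ we deduce $x\notin\mathrm{Supp}(\varphi)$. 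By Reeb-invariance of the support, $\varphi_\alpha^t(x)\notin\mathrm{Supp}(\varphi)$ as well, and hence $\varphi(\varphi_\alpha^t(x)) = \varphi_\alpha^t(x)$. Combining with $\varphi(\varphi'(x)) = \varphi_\alpha^t(x)$ gives $\varphi(\varphi'(x)) = \varphi(\varphi_\alpha^t(x))$, and injectivity of $\varphi$ yields $\varphi'(x) = \varphi_\alpha^t(x) \notin \mathrm{Supp}(\varphi)$, contradicting the standing assumption.

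Once this is established, the remainder is a routine verification. Since $\mathrm{Supp}(\varphi)$ is closed, $\varphi$ is the identity on an open neighborhood $U$ of $\varphi'(x)$. Then $\varphi(\varphi'(x)) = \varphi'(x)$, which immediately gives $\varphi'(x) = \varphi_\alpha^t(x)$, the first condition for belonging to $\spec^\alpha(\tphi')$. For the second condition, $\varphi^*\alpha = \alpha$ on $U$, so $(\varphi\circ\varphi')^*\alpha|_x = (\varphi')^*(\varphi^*\alpha)|_x = (\varphi')^*\alpha|_x$, and comparing with the assumed equality $(\varphi\circ\varphi')^*\alpha|_x = \alpha_x$ yields $(\varphi')^*\alpha|_x = \alpha_x$.

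The only substantive obstacle is the contradiction argument above: the displacement hypothesis alone would not suffice, because a priori nothing prevents $\varphi'(x)$ from lying in $\mathrm{Supp}(\varphi)$; it is precisely the combination of displacement with Reeb-invariance of the support (coming from the commutation hypothesis) and injectivity of $\varphi$ that forces $\varphi'(x)$ out of $\mathrm{Supp}(\varphi)$ and lets the rest of the proof go through mechanically.
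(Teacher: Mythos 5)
Your proof is correct and uses exactly the same ingredients as the paper's: displacement of the support, Reeb-invariance of the support coming from the commutation hypothesis, and injectivity of $\varphi$ (the paper phrases the last step as $\mathrm{Supp}(\varphi^{-1})=\mathrm{Supp}(\varphi)$ and composes with $\varphi^{-1}$, while the paper's contradiction is set up on $x\in\mathrm{Supp}(\varphi)$ rather than on $\varphi'(x)\in\mathrm{Supp}(\varphi)$). These are only cosmetic reorganizations, so the argument is essentially the same.
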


The proof of Lemma \ref{lem:spec} repeats almost verbatim the proof of Lemma 5.6 in \cite{arlove2024contactnonsqueezingvariousclosed}.

\begin{proof}[Proof of Lemma \ref{lem:spec}]
Denote by $\varphi:=\Pi(\tphi)$, $\varphi':=\Pi(\tphi')$. Let $t\in\spec^\alpha(\tphi\cdot\tphi')$ and $x\in M$ such that $\varphi(\varphi'(x))=\varphi_\alpha^t(x)$ and $((\varphi\circ\varphi')^*\alpha)_x=\alpha_x$. First, suppose by contradiction that $x\in \mathrm{Supp}(\varphi)$. Then $\varphi'(x)\notin \mathrm{Supp}(\varphi)$ since $\varphi'$ displaces the support of $\varphi$. Therefore, $\varphi(\varphi'(x))=\varphi'(x)=\varphi_\alpha^t(x)$ which implies the contradiction that $\varphi'(x)\in \varphi_\alpha^t(\mathrm{Supp}(\varphi))=\mathrm{Supp}(\varphi_\alpha^{t}\circ\varphi\circ\varphi_\alpha^{-t})=\mathrm{Supp}(\varphi)$ since $\varphi$ commutes with the Reeb flow. Therefore $x\notin\mathrm{Supp}(\varphi)$. And so, arguing as before, $\varphi_\alpha^t(x)\notin\mathrm{Supp}(\varphi)$. Thus $\varphi(\varphi'(x))=\varphi_\alpha^t(x)\notin\mathrm{Supp}(\varphi^{-1})$ and so, composing both sides of the equation by $\varphi^{-1}$, we deduce that $\varphi'(x)=\varphi_\alpha^t(x)$.  Finally, note that $((\varphi\circ\varphi')^*\alpha)_x=((\varphi')^*\alpha)_x=\alpha_x$, since $\varphi'(x)=\varphi_\alpha^t(x)\notin\mathrm{Supp}(\varphi)$, which implies that $t\in\spec^\alpha(\tphi')$ and concludes the proof.\end{proof}



As a consequence of the previous Lemma \ref{lem:spec} we get the following. 

\begin{lem}\label{lem:inegalite capacite-energie}
   Let $\varphi\in\conto(M,\xi)$ and $h\in C^\infty(M,\R)$ be such that $R_\alpha\cdot h=0$ and $\varphi(\mathrm{Supp}(h))\cap\mathrm{Supp}(h)=\emptyset$. Then $C_\alpha(\tphi_h\cdot\tphi)=C_\alpha(\tphi)$ for any $\tphi\in\widetilde{\conto}(M,\xi)$ such that $\Pi(\tphi)=\varphi$.
\end{lem}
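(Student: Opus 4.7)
The plan is to interpolate between $\tphi$ and $\tphi_h\cdot\tphi$, and then exploit the continuity of $C_\alpha$ together with the nowhere density of $\spec^\alpha(\tphi)$. Specifically, I will consider the path
\[ s \in [0,1] \longmapsto \tphi_{sh}\cdot\tphi \in \widetilde{\conto}(M,\xi),\]
which goes from $\tphi$ at $s=0$ to $\tphi_h\cdot\tphi$ at $s=1$. Because $h$ is autonomous, $\tphi_{sh}$ can be represented by the contact isotopy $(\varphi_h^{st})_{t\in[0,1]}$, so this path is continuous in $\widetilde{\conto}(M,\xi)$.

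For each $s\in[0,1]$, the rescaled Hamiltonian $sh$ still satisfies $R_\alpha\cdot(sh)=0$ and has support contained in $\mathrm{Supp}(h)$. The first condition gives $\{sh,1\}_\alpha=0$, hence $[X^\alpha_{sh},R_\alpha]=0$, so $\Pi(\tphi_{sh})=\varphi_h^s$ commutes with the Reeb flow. The support condition, combined with the displacement hypothesis $\varphi(\mathrm{Supp}(h))\cap\mathrm{Supp}(h)=\emptyset$, ensures that $\Pi(\tphi)=\varphi$ displaces $\mathrm{Supp}(\Pi(\tphi_{sh}))$. The pair $(\tphi_{sh},\tphi)$ therefore fits the hypotheses of Lemma \ref{lem:spec}, and we conclude
\[\spec^\alpha(\tphi_{sh}\cdot\tphi)\subset \spec^\alpha(\tphi) \qquad \text{for every } s\in[0,1].\]

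By the continuity statement of Theorem \ref{thm:specselstrongord} (respectively Proposition \ref{prop:lens space}), the map $s\mapsto C_\alpha(\tphi_{sh}\cdot\tphi)$ is continuous from $[0,1]$ to $\R$ with image inside $\spec^\alpha(\tphi)$. Since $\spec^\alpha(\tphi)$ is nowhere dense in $\R$ by Lemma 2.11 of \cite{albers}, and every connected subset of $\R$ is an interval, this image -- being connected and contained in a set with empty interior -- must reduce to a single point. Its value at $s=0$ is $C_\alpha(\tphi)$, hence its value at $s=1$ is also $C_\alpha(\tphi)$, which is exactly the equality $C_\alpha(\tphi_h\cdot\tphi)=C_\alpha(\tphi)$ claimed by the lemma.

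The only mildly delicate point is the connectedness/nowhere-density argument concluding that a continuous real-valued function with values in a nowhere dense set is constant on a connected domain; everything else is a direct verification that the hypotheses of Lemma \ref{lem:spec} are preserved along the interpolation $s\mapsto\tphi_{sh}$.
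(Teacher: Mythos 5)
Your proof is correct and follows essentially the same route as the paper's: interpolate via $s\mapsto\tphi_h^s\cdot\tphi$ (your $\tphi_{sh}\cdot\tphi$ is the same path, since $h$ is autonomous), apply Lemma \ref{lem:spec} at each $s$, and conclude by continuity of $C_\alpha$ together with the nowhere density of $\spec^\alpha(\tphi)$. Your verification of the hypotheses of Lemma \ref{lem:spec} (commutation with the Reeb flow from $R_\alpha\cdot h=0$, and $\mathrm{Supp}(\varphi_h^s)\subset\mathrm{Supp}(h)$) is exactly what the paper leaves implicit, and you correctly only need the inclusion $\spec^\alpha(\tphi_{sh}\cdot\tphi)\subset\spec^\alpha(\tphi)$ rather than the equality the paper states.
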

\begin{proof}
    Let us denote by $\varphi_h^s\in\conto(M,\xi)$ the map we get by integrating the contact vector field $X_h^\alpha=F_\alpha^{-1}(h)$ until time $s\in\R$. For all $s\in\R$, since $\varphi$ displaces the support of $\varphi_h^s$, we deduce by Lemma \ref{lem:spec} that $\spec^\alpha(\tphi_h^s\cdot\tphi)=\spec^\alpha(\tphi)$, where $\tphi_h^s:=[(\varphi_h^{ts})_{t\in[0,1]}]$. By continuity and spectrality of $C_\alpha$ we deduce that the map $s\mapsto C_\alpha(\tphi_h^s\cdot\tphi)$ has to be constant since it takes value in the nowhere dense set $\spec^\alpha(\tphi)$ (see Lemma 2.11 \cite{albers}). This concludes the proof since $\tphi_h^0\cdot\tphi=\tphi$.
\end{proof}

\begin{lem}\label{lem:subadditive}
    Let $h\in C^\infty(M,\R)$ be such that $R_\alpha\cdot h=0$. Then $\zeta_\alpha(h)=\underset{k\to\infty}\lim \frac{C_\alpha(\tphi_h^k)}{k}$.
\end{lem}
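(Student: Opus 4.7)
The plan is to establish that the sequence $a_k := C_\alpha(\tphi_h^k)$ is subadditive, and then invoke Fekete's lemma to promote the liminf defining $\zeta_\alpha(h)$ into a genuine limit.

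The first step is to observe that the hypothesis $R_\alpha \cdot h = 0$ is exactly what makes the flow of $X_h^\alpha$ preserve the contact form $\alpha$ on the nose. Indeed, using Cartan's formula together with the defining identities $\alpha(X_h^\alpha) = h$ and $\ud \alpha(X_h^\alpha, \cdot) = \ud h(R_\alpha)\alpha - \ud h$, one computes
\[
\mathcal{L}_{X_h^\alpha}\alpha \;=\; \ud(\iota_{X_h^\alpha}\alpha) + \iota_{X_h^\alpha}\ud\alpha \;=\; \ud h + (\ud h(R_\alpha)\alpha - \ud h) \;=\; (R_\alpha \cdot h)\,\alpha \;=\; 0.
\]
Consequently, if $(\varphi_h^t)_{t \in \R}$ denotes the flow of $X_h^\alpha$, then $(\varphi_h^t)^*\alpha = \alpha$ for every $t$. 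By definition of the group operation on $\widetilde{\conto}(M,\xi)$, the $k$-th power $\tphi_h^k$ is represented by the isotopy $(\varphi_h^{tk})_{t\in[0,1]}$, which is therefore $\alpha$-strict for every $k \in \N_{>0}$.

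The second step is to apply the appropriate subadditivity property. When $(M,\xi)$ is strongly orderable and $C_\alpha = C_+^\alpha$, property 6 of Theorem \ref{thm:specselstrongord} applied to $\tphi = \tphi_h^k$ (which by the first step is representable by an $\alpha$-strict isotopy) and $\tpsi = \tphi_h^l$ gives
\[
a_{k+l} \;=\; C_+^\alpha(\tphi_h^k \cdot \tphi_h^l) \;\leq\; C_+^\alpha(\tphi_h^k) + C_+^\alpha(\tphi_h^l) \;=\; a_k + a_l.
\]
When $(M,\xi) = (L_j^{2n+1}(\underline{w}),\xi_j^{\underline{w}})$ and $C_\alpha = c_\alpha$, the same conclusion follows from property 5 of Proposition \ref{prop:lens space}, since that property too requires exactly that the left factor be represented by an $\alpha$-strict isotopy. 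In both cases, $(a_k)_{k \geq 1}$ is a subadditive sequence of real numbers.

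The third and final step is Fekete's subadditive lemma, which states that for any subadditive sequence $(a_k)$, the limit $\lim_{k\to\infty} a_k/k$ exists in $[-\infty, +\infty)$ and coincides with $\inf_{k \geq 1} a_k/k$. In particular, the liminf in the definition of $\zeta_\alpha(h)$ is an honest limit, and
\[
\zeta_\alpha(h) \;=\; \liminf_{k \to \infty} \frac{C_\alpha(\tphi_h^k)}{k} \;=\; \lim_{k \to \infty} \frac{C_\alpha(\tphi_h^k)}{k},
\]
which is the desired conclusion. (Finiteness of this limit is not needed for the statement, but it is guaranteed by Remark \ref{rem:finiteness} via the sandwich $\min h \leq \zeta_\alpha(h) \leq \max h$.) There is no real obstacle in this proof; the only point requiring care is verifying that the strict-isotopy hypothesis of the relevant subadditivity property is in force, which is precisely what the Reeb-invariance assumption on $h$ secures.
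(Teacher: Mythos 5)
Your proof is correct and follows exactly the paper's argument: the Reeb-invariance of $h$ makes the isotopy $\alpha$-strict, which puts the subadditivity property of the selector in force, and Fekete's lemma upgrades the liminf to a limit. You simply supply the Cartan-formula computation and the case distinction that the paper leaves implicit.
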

\begin{proof}
     $R_\alpha\cdot h=0$ implies that the contact flow $(\varphi_h^t)_{t\in[0,1]}$ we get by integrating $X_h^\alpha=F_\alpha^{-1}(h)$ is a $\alpha$-strict contact isotopy, i.e. $(\varphi_h^t)^*\alpha=\alpha$. Thus, we deduce that the sequence $\left(\frac{C_\alpha(\tphi_h^k)}{k}\right)_{k\in\N}$ is subadditive and conclude by Feteke Lemma. 
\end{proof}

We are now ready to prove Proposition \ref{prop:contact quasi state}

\begin{proof}[Proof of Proposition \ref{prop:contact quasi state}]\

\begin{enumerate}[1.]
    \item We have $F_\alpha^{-1}(s)=sR_\alpha$ for any $s\in\R$ and thus $\tphi_s=\tphi_\alpha^s$. Hence, $C_\alpha(\tphi_s^k)=C_\alpha(\tphi_\alpha^{sk})=sk$ and this implies the normalization property.
    \item The monotonicity property is a direct consequence of the following implications \[g\leq h\Rightarrow \tphi_g\cleq \tphi_h\Rightarrow \tphi_g^k\cleq \tphi_h^k\Rightarrow C_\alpha(\tphi_g^k)\leq C_\alpha(\tphi_h^k).\]
    The first implication is trivial (see \cite[Lemma 2.2]{FPR}), the second one follows from bi-invariance of $\cleq$ (see \cite{EP00}) and the last one is a consequence of the monotonicity of $C_\alpha$. 
    \item First, consider any $h,g : M\to\R$ and denote by $(\varphi_h^t)_{t\in\R}$ and $(\varphi_g^t)_{t\in\R}$ the contact isotopies we get by integrating the contact vector fields $X_h^\alpha$ and $X_g^\alpha$. Then the smooth function corresponding to the speed $X_s\in\chi(M,\xi)$ of the contact isotopy $(\varphi_h^t\circ\varphi_g^t)_{t\in\R}$ at time $s\in\R$, i.e. $\frac{d}{dt}|_{t=s}(\varphi_h^t\circ\varphi_g^t)=X_s(\varphi_h^s\circ\varphi_g^s)$, is given by
    \[k_s:=F_\alpha^{-1}(X_s)=h+(e^{f_s}\cdot g)\circ \varphi_h^s,\]
    where $f_s$ denotes the $\alpha$-conformal factor of $\varphi_h^s$, i.e. $(\varphi_h^s)^*\alpha=e^{f_s}\alpha$. Suppose now that $\{h,g\}_\alpha=\{1,h\}_\alpha=\{1,g\}_\alpha=0$. From $\{h,g\}_\alpha=\{1,h\}_\alpha=0$ we get that $k_s=h+g$. Therefore $\tphi_h^k\cdot\tphi_g^k=\tphi_{h+g}^k$ for any $k\in\N_{>0}$. Moreover $\{1,h\}_\alpha=0$ implies that $\tphi_h^k$ can be represented by the $\alpha$-strict contact isotopy $(\varphi_h^{kt})$ and so 
    \begin{equation}\label{eq:inegalité triangulaire quasi-state}
C_\alpha(\tphi_{h+g}^k)=C_\alpha(\tphi_{h}^k\cdot\tphi_g^k)\leq  C_\alpha(\tphi_h^k)+C_\alpha(\tphi_g^k).
    \end{equation}
    Since $\{1,h\}_\alpha=\{1,g\}_\alpha=0$ we also have that implies that $R_\alpha\cdot(h+g)=0$. Thus, by Lemma \ref{lem:subadditive}, we obtain the desired inequality
    \[\zeta_\alpha(h+g)\leq \zeta_\alpha(h)+\zeta_\alpha(g)\]it by dividing the inequality \eqref{eq:inegalité triangulaire quasi-state} by $k$ and by taking the limit as $k\to+\infty$. 
    \item Let $h\in C^\infty(M,\R)$, $k\in\N$ and $\varphi\in\conto(M,\xi)$ such that $R_\alpha\cdot h=0$ and $\varphi(\mathrm{Supp}(h))\cap\mathrm{Supp}(h)=\emptyset$. A direct computation shows that $kh$ generates the element $\tphi_h^k\in\widetilde{\conto}(M,\xi)$. In particular, by Lemma \ref{lem:inegalite capacite-energie}
    \begin{equation}\label{eq:égalité capacité énergie}
C_\alpha(\tphi_h^k\cdot\tphi)=C_\alpha(\tphi)
\end{equation}  for any $\tphi\in\widetilde{\conto}(M,\xi)$ such that $\Pi(\tphi)=\varphi$. Let us write $\tphi_h^k$ as $(\tphi_h^k\cdot\tphi)\cdot\tphi^{-1}$. If $(M,\xi)$ is the lens space $(L_j^{2n+1}(\underline{w}),\xi_j^{\underline{w}})$, using the above equality \eqref{eq:égalité capacité énergie}, we thus get
\begin{equation}\label{eq:lens space}
\begin{aligned} C_\alpha(\tphi_h^k)&\leq C_\alpha(\tphi_h^k\cdot\tphi)+ \left\lceil C_\alpha(\tphi^{-1})\right\rceil_{T_{\underline{w}}}= C_\alpha(\tphi) + \left\lceil C_\alpha(\tphi^{-1})\right\rceil_{T_{\underline{w}}},\ \text{ and} \\
C_\alpha(\tphi_h^k)&\geq  C_\alpha(\tphi_h^k\cdot\tphi)- \left\lceil  C_\alpha(\tphi)\right\rceil_{T_{\underline{w}}}= C_\alpha(\tphi) - \left\lceil C_\alpha(\tphi)\right\rceil_{T_{\underline{w}}},
\end{aligned}
\end{equation}
where the inequality of the first line follows from Proposition \ref{prop:lens space} and the inequality of the second line from Lemma \ref{lem:inegalite triangulaire renversée}.
Otherwise, if $(M,\xi)$ is strongly orderable but is not a lens space, $C_\alpha(\tphi_h^k)=C_+^\alpha(\tphi_h^k)$ and we write $\tPhi_h^k:=\widetilde{G_\alpha}(\tphi_h^k),\ \Phi_h^k=\Pi(\tPhi_h^k),\ \tPhi:=\widetilde{G_\alpha}(\tphi),\ \Phi:=\Pi(\tPhi)$.  Similarly, we get by Theorem \ref{thm:specselstrongord}, Lemma \ref{lem:inegalite triangulaire renversée} and \eqref{eq:égalité capacité énergie}
\begin{equation}\label{eq:stronglyorderable}
\begin{aligned}
C_+^\alpha(\tphi_h^k)&\leq C_+^\alpha(\tphi_h^k\cdot\tphi)+\ell_+^{(\Phi_h^k\circ\Phi)^*\beta}(\tPhi^{-1}\cdot \tDelta,\tDelta)=C_+^\alpha(\tphi)+\ell_+^{\Phi^*\beta}(\tPhi^{-1}\cdot \tDelta,\tDelta)\\
C_+^\alpha(\tphi_h^k)&\geq C_+^\alpha(\tphi_h^k\cdot\tphi)-\ell_+^{(\Phi_h^k\circ\Phi)^*\beta}(\tPhi^{-1}\cdot \tDelta,\tDelta)=C_+^\alpha(\tphi)-\ell_+^{\Phi^*\beta}(\tPhi^{-1}\cdot \tDelta,\tDelta)
\end{aligned}
\end{equation}
  where the last equalities follows from the fact that $(\Phi_h^k\circ\Phi)^*\beta=\Phi^*((\Phi_h^k)^*\beta)$ and $(\Phi_h^k)^*\beta=\beta$ since $R_\alpha\cdot h=0$ (see for example the sixth point in the proof of Theorem \ref{thm:specselstrongord} in Section \ref{se:preuves des thm}). Dividing the inequalities in \eqref{eq:lens space} and \eqref{eq:stronglyorderable} by $k$, we deduce that $\frac{C_\alpha(\tphi_h^k)}{k}$ goes to $0$ as $k$ goes to infinity. \qedhere
    \end{enumerate}
\end{proof}

\subsection{Contact quasi-measure}

Let $\zeta_\alpha : C^\infty(M,\R)\to\R$ be the map defined in \ref{prop:contact quasi state} (see also Remark \ref{rem:finiteness}). Denoting by $\mathfrak{P}(M)$ the power set of $M$ we have the following.

\begin{prop}\label{prop:contact quasi-measure}
    The map 
    \[\tau_\alpha :\mathfrak{P}(M)\to \R\quad \quad A\mapsto \sup\left\{\zeta_\alpha(h)\ \left|\  \parbox{5cm}{$h\in C^\infty(M,[0,1])$ such that  $R_\alpha\cdot h=0$ and  $\mathrm{Supp}(h)\subset A$}\right.\right\}\]
    satisfies:
    \begin{enumerate}[1.]
        \item (normalization) $\tau_\alpha(M)=1$
        \item (monotonicity) for any $A,B\in\mathfrak{P}(M)$, if $A\subset B$ then $\tau_\alpha(A)\leq \tau_\alpha(B)$
        \item (subadditivity) for any $N,k\in\N_{>0}$, if $f: M\to \R^N$ is a $\alpha$-contact involutive map and $X_1,\cdots, X_k$ are bounded open subsets of $\R^N$ 
        \[\tau_\alpha\left(\underset{i\in[1,k]\cap \N}\bigcup f^{-1}(X_i)\right)\leq \sum\limits_{i=1}^k\tau_\alpha(f^{-1}(X_i))\]
        \item (vanishing property) if $A\subset M$ can be displaced by some $\varphi\in\conto(M,\xi)$, i.e. $\varphi(A)\cap A=\emptyset$, $\tau_\alpha(A)=0$.
    \end{enumerate}
\end{prop}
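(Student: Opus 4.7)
The plan is to verify the four listed properties of $\tau_\alpha$ in turn, relying on the properties of $\zeta_\alpha$ established in Proposition \ref{prop:contact quasi state}. Three of them—normalization, monotonicity, and vanishing—will follow almost immediately from the corresponding properties of $\zeta_\alpha$. The only non-trivial step is subadditivity, and the heart of the argument is showing that pullbacks $\rho\circ f$ of smooth functions on $\R^N$ by an $\alpha$-contact involutive map remain in pairwise involution.

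First I would test normalization with the constant function $h\equiv 1$, which is Reeb-invariant, takes values in $[0,1]$, and has support equal to $M$; this gives $\tau_\alpha(M)\geq\zeta_\alpha(1)=1$, while any admissible $h$ satisfies $h\leq 1$ and hence $\zeta_\alpha(h)\leq 1$ by the monotonicity of $\zeta_\alpha$. Monotonicity of $\tau_\alpha$ is immediate from the definition, since enlarging the set only enlarges the family of admissible test functions. For the vanishing property, if $\varphi\in\conto(M,\xi)$ displaces $A$, it displaces $\mathrm{Supp}(h)\subset A$ for any admissible $h$, and combined with $R_\alpha\cdot h=0$ this yields $\zeta_\alpha(h)=0$ by property 4 of Proposition \ref{prop:contact quasi state}.

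The main step is subadditivity, based on the following key computation: if $f=(f_1,\dots,f_N):M\to\R^N$ is $\alpha$-contact involutive and $\rho,\sigma\in C^\infty(\R^N,\R)$, then $g:=\rho\circ f$ and $g':=\sigma\circ f$ satisfy $\{g,g'\}_\alpha=\{1,g\}_\alpha=\{1,g'\}_\alpha=0$. The last two identities follow from $R_\alpha\cdot f_i=-\{f_i,1\}_\alpha=0$ and the chain rule. For the first, I would explicitly compute
\begin{equation*}
X^\alpha_g \;=\; \sum_{i=1}^{N}(\partial_i\rho)(f)\,X^\alpha_{f_i} \;+\; \Bigl(g-\sum_{i=1}^{N}(\partial_i\rho)(f)\,f_i\Bigr)\,R_\alpha,
\end{equation*}
which is verified by checking the defining equations $\alpha(X^\alpha_g)=g$ and $d\alpha(X^\alpha_g,\cdot)=-dg$ (the latter using $R_\alpha\cdot g=0$). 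Then $\{g,g'\}_\alpha=dg'(X^\alpha_g)=\sum_j(\partial_j\sigma)(f)\,df_j(X^\alpha_g)$, and each $df_j(X^\alpha_g)$ vanishes because $df_j(X^\alpha_{f_i})=\{f_i,f_j\}_\alpha+df_i(R_\alpha)\,f_j=0$ by contact involutivity of $f$, while $df_j(R_\alpha)=R_\alpha\cdot f_j=0$.

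Granting this, subadditivity proceeds as follows. Fix any $h\in C^\infty(M,[0,1])$ with $R_\alpha\cdot h=0$ and $\mathrm{Supp}(h)\subset\bigcup_{i=1}^k f^{-1}(X_i)$. Since $f(\mathrm{Supp}(h))$ is a compact subset of the open set $\bigcup_i X_i\subset\R^N$, I would choose smooth functions $\rho_i:\R^N\to[0,1]$ with compact support in $X_i$ such that $\sum_i\rho_i\equiv 1$ on a neighborhood of $f(\mathrm{Supp}(h))$ (a standard finite partition-of-unity construction). Setting $g_i:=\rho_i\circ f\in C^\infty(M,[0,1])$, each $g_i$ is Reeb-invariant, supported in $f^{-1}(X_i)$, and $h\leq\sum_i g_i$ on $M$. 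By bilinearity of $\{\cdot,\cdot\}_\alpha$, the key computation also gives $\{g_1+\cdots+g_{j-1},g_j\}_\alpha=\{1,g_1+\cdots+g_{j-1}\}_\alpha=\{1,g_j\}_\alpha=0$, so iterating the triangle inequality from Proposition \ref{prop:contact quasi state} yields
\begin{equation*}
\zeta_\alpha(h)\;\leq\;\zeta_\alpha\!\Bigl(\sum_{i=1}^k g_i\Bigr)\;\leq\;\sum_{i=1}^k\zeta_\alpha(g_i)\;\leq\;\sum_{i=1}^k\tau_\alpha\bigl(f^{-1}(X_i)\bigr),
\end{equation*}
and taking the supremum over $h$ yields the claim.

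The main obstacle I anticipate is the key computation: pinning down the correct formula for the contact Hamiltonian of $\rho\circ f$ and showing that the Reeb-direction correction term—which has no analogue in the symplectic case—does not spoil the vanishing of the Jacobi bracket; this is precisely where the full strength of the hypothesis $\{f_i,1\}_\alpha=0$ is used, via the Reeb-invariance $R_\alpha\cdot f_i=0$.
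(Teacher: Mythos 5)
Your proof is correct, and its overall architecture matches the paper's: normalization, monotonicity and vanishing are read off directly from the corresponding properties of $\zeta_\alpha$, and subadditivity is reduced, via a partition of unity on $\R^N$ and the triangle inequality for $\zeta_\alpha$, to the fact that pullbacks $\rho\circ f$ by an $\alpha$-contact involutive map are pairwise in involution and Reeb-invariant. Where you genuinely diverge is in how that key fact is established and in how the reduction is organized. The paper proves the involution statement (its Lemma \ref{lem:poisson bracket}) by a local computation in Darboux coordinates, writing out $X_h^\alpha$ explicitly in a chart and expanding $\{\mu'\circ f,\mu\circ f\}_\alpha$ term by term; you instead exhibit the global, coordinate-free identity $X^\alpha_{\rho\circ f}=\sum_i(\partial_i\rho)(f)\,X^\alpha_{f_i}+\bigl(\rho\circ f-\sum_i(\partial_i\rho)(f)f_i\bigr)R_\alpha$, verified against the defining equations $\alpha(X)=h$ and $\ud\alpha(X,\cdot)=\ud h(R_\alpha)\alpha-\ud h$, from which the vanishing of the bracket is immediate. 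This is cleaner and makes visible exactly where $\ud f_i(R_\alpha)=0$ is used. Second, the paper first proves an auxiliary identity ($\tau_\alpha(f^{-1}(X))$ equals the supremum of $\zeta_\alpha(\mu\circ f)$ over $\mu$ supported in $X$, its Lemma \ref{lem:contact quasi-measure}) and then runs an induction reducing to two open sets; you bypass both by dominating an arbitrary admissible $h$ by $\sum_i\rho_i\circ f$ with $\sum_i\rho_i\equiv 1$ near the compact set $f(\mathrm{Supp}(h))$, handling all $k$ sets at once. Both routes are valid; yours is marginally shorter, while the paper's auxiliary lemma is a reusable statement in its own right.
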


   Before proving Proposition \ref{prop:contact quasi-measure} let us fix an arbitrary $N\in\N_{>0}$ and prove the following Lemmas.
    \begin{lem}\label{lem:contact quasi-measure}
        Let $f: M\to \R^{N}$ be a smooth map such that $\ud f(R_\alpha)\equiv 0$. Then for any open subset $X\subset \R^N$
        \[\tau_\alpha(f^{-1}(X))=\sup\{\zeta_\alpha(\mu\circ f)\ |\ \mu\in C^{\infty}(\R^N,[0,1]),\ \mathrm{Supp}(\mu)\subset X\}.\]
    \end{lem}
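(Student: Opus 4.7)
\textbf{Proof plan for Lemma \ref{lem:contact quasi-measure}.} The plan is to prove the two inequalities separately. Call the set on the right-hand side $S$. Our goal is to identify $S$ with a subfamily of the functions used to define $\tau_\alpha(f^{-1}(X))$, and conversely to show every function used to define $\tau_\alpha(f^{-1}(X))$ can be dominated by an element of $S$, so monotonicity of $\zeta_\alpha$ forces equality of the two suprema.

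First, for the inequality $\tau_\alpha(f^{-1}(X)) \geq \sup S$, I would check that, for any $\mu \in C^\infty(\R^N,[0,1])$ with $\mathrm{Supp}(\mu)\subset X$, the function $h:=\mu\circ f\in C^\infty(M,[0,1])$ is admissible in the definition of $\tau_\alpha(f^{-1}(X))$. Indeed $R_\alpha\cdot h=\ud\mu\bigl(\ud f(R_\alpha)\bigr)=0$ by the hypothesis $\ud f(R_\alpha)\equiv 0$, and $\mathrm{Supp}(h)\subset f^{-1}(\mathrm{Supp}(\mu))\subset f^{-1}(X)$. Hence $\zeta_\alpha(\mu\circ f)\leq \tau_\alpha(f^{-1}(X))$, and taking the supremum gives the desired inequality.

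For the reverse inequality $\tau_\alpha(f^{-1}(X))\leq \sup S$, I would take an arbitrary admissible $h\in C^\infty(M,[0,1])$ with $R_\alpha\cdot h=0$ and $\mathrm{Supp}(h)\subset f^{-1}(X)$. Since $M$ is closed, $K:=\mathrm{Supp}(h)$ is compact and contained in the open set $f^{-1}(X)$, so $f(K)$ is a compact subset of the open set $X\subset\R^N$. By a standard bump function construction on $\R^N$, there exists $\mu\in C^\infty(\R^N,[0,1])$ with $\mu\equiv 1$ on an open neighborhood of $f(K)$ and $\mathrm{Supp}(\mu)\subset X$. Then $\mu\circ f\geq h$ pointwise on $M$: on $K$ one has $\mu(f(x))=1\geq h(x)$, and off $K$ one has $h(x)=0\leq \mu(f(x))$. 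By the monotonicity property of $\zeta_\alpha$ (Proposition \ref{prop:contact quasi state}), $\zeta_\alpha(\mu\circ f)\geq \zeta_\alpha(h)$, and since $\mu\circ f\in S$ this gives $\zeta_\alpha(h)\leq \sup S$. Taking the supremum over all admissible $h$ yields $\tau_\alpha(f^{-1}(X))\leq \sup S$.

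The argument is essentially routine once one observes that $\ud f(R_\alpha)=0$ makes $\mu\circ f$ automatically Reeb-invariant; no step is a real obstacle, but the only point requiring any care is the bump-function construction, where one must exploit compactness of $K$ together with openness of $X$ to produce $\mu$ with $\mathrm{Supp}(\mu)\subset X$ (rather than merely $\mathrm{Supp}(\mu)\subset \overline{X}$), so that $\mu\circ f$ is genuinely admissible in the definition of $S$.
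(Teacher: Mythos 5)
Your proof is correct and follows essentially the same route as the paper: the first inequality is immediate from admissibility of $\mu\circ f$ (which you verify more explicitly than the paper does), and the reverse inequality uses exactly the paper's argument — compactness of $f(\mathrm{Supp}(h))$ inside the open set $X$, a bump function $\mu$ equal to $1$ there and supported in $X$, and monotonicity of $\zeta_\alpha$ applied to $h\leq\mu\circ f$. The only cosmetic difference is that the paper phrases the second step with an $\varepsilon$-approximate maximizer rather than taking the supremum over all admissible $h$ directly; the two are equivalent.
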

The above Lemma \ref{lem:contact quasi-measure} is inspired by \cite[Lemma 8.1]{djordjević2025quantitativecontacthamiltoniandynamics}.

\begin{proof}[Proof of Lemma \ref{lem:contact quasi-measure}]
It is clear from the definition of $\tau_\alpha$ that $\sup\{\zeta(\mu\circ f)\ |\ \mu\in C^{\infty}(\R^N,[0,1]),\ \mathrm{Supp}(\mu)\subset X\}\leq \tau_\alpha(f^{-1}(X))$. To show the reverse inequality, let $\varepsilon>0$ be a positive number and let $h\in C^\infty(M,[0,1])$ be a function supported in $f^{-1}(X)$ such that $R_\alpha\cdot h=0$ with \begin{equation}\label{eq:lem contact quasi-measure}
\tau_\alpha(f^{-1}(X))-\varepsilon\leq \zeta_\alpha(h).
\end{equation} Since $M$ is compact and $\mathrm{Supp}(h)$ is closed by definition, the set $f(\mathrm{Supp}(h))$ is a compact set of the open set $X$. Hence, one can construct a smooth function $\mu :\R^N\to [0,1]$ which is supported in $X$ and equal to $1$ on $f(\mathrm{Supp}(h))$. Thus $h\leq \mu\circ f$ and so $
\zeta_\alpha(h)\leq \zeta_\alpha(\mu\circ f)$ by monotonicity of $\zeta_\alpha$. Combining the previous inequality with \eqref{eq:lem contact quasi-measure}, and letting $\varepsilon$ goes to $0$, allows to get the desired reverse inequality.  
\end{proof}

\begin{lem}\label{lem:poisson bracket}
    Let $f=(f_1,\cdots,f_N): M\to\R^N$ be a $\alpha$-contact involutive map and $\mu,\mu': \R^N\to\R$ be smooth functions. Then $\{\mu'\circ f,\mu\circ f\}_\alpha=\{1,\mu\circ f\}_\alpha=\{1,\mu'\circ f\}_\alpha=0$. 
\end{lem}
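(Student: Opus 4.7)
The plan has two moving parts: unpack the involutivity hypothesis at the level of the Reeb vector field, and then obtain a chain-rule-type decomposition of $X_{\mu'\circ f}^\alpha$ in terms of $X_{f_j}^\alpha$ and $R_\alpha$.

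First I would use the formula $\{h,g\}_\alpha = \ud g(X_h^\alpha) - \ud h(R_\alpha)\cdot g$ with $g\equiv 1$ to read off $\{f_i,1\}_\alpha = -R_\alpha\cdot f_i$, so the involutivity assumption immediately gives $R_\alpha\cdot f_i = 0$ for all $i$. By the ordinary chain rule $R_\alpha\cdot(\mu\circ f) = \sum_i(\partial_i\mu\circ f)\,R_\alpha\cdot f_i = 0$, and likewise for $\mu'\circ f$. Since $X_1^\alpha = R_\alpha$, this yields $\{1,\mu\circ f\}_\alpha = \ud(\mu\circ f)(R_\alpha) = 0$ and $\{1,\mu'\circ f\}_\alpha = 0$, dispatching the last two identities of the lemma.

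For the main identity $\{\mu'\circ f,\mu\circ f\}_\alpha = 0$, the key step I would carry out is the following decomposition of the generator: writing $g := \mu'\circ f - \sum_j(\partial_j\mu'\circ f)\,f_j$, I claim
\[
X_{\mu'\circ f}^\alpha \;=\; \sum_j (\partial_j\mu'\circ f)\,X_{f_j}^\alpha \;+\; g\,R_\alpha.
\]
To verify this I would check the two defining equations of $X_{\mu'\circ f}^\alpha$ on the right-hand side $Y$. For the first, $\alpha(Y) = \sum_j(\partial_j\mu'\circ f)f_j + g = \mu'\circ f$ by the definition of $g$. For the second, since $R_\alpha\cdot(\mu'\circ f) = 0$ the defining equation reduces to $\ud\alpha(X_{\mu'\circ f}^\alpha,\cdot) = -\ud(\mu'\circ f)$; and indeed $\ud\alpha(Y,\cdot) = \sum_j(\partial_j\mu'\circ f)\,\ud\alpha(X_{f_j}^\alpha,\cdot)$ because $\ud\alpha(R_\alpha,\cdot) = 0$, and each $\ud\alpha(X_{f_j}^\alpha,\cdot) = \ud f_j(R_\alpha)\alpha - \ud f_j = -\ud f_j$ by the $R_\alpha$-invariance of $f_j$, so the total is $-\sum_j(\partial_j\mu'\circ f)\ud f_j = -\ud(\mu'\circ f)$ by the chain rule.

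Given this decomposition, the final computation is mechanical. The bracket formula and $R_\alpha\cdot(\mu'\circ f) = 0$ give
\[
\{\mu'\circ f,\mu\circ f\}_\alpha = \ud(\mu\circ f)(X_{\mu'\circ f}^\alpha) = \sum_j(\partial_j\mu'\circ f)\,\ud(\mu\circ f)(X_{f_j}^\alpha),
\]
using that the $g\,R_\alpha$ term contributes $g\cdot R_\alpha\cdot(\mu\circ f) = 0$. Applying the chain rule once more and the formula backwards, $\ud(\mu\circ f)(X_{f_j}^\alpha) = \sum_i(\partial_i\mu\circ f)\,\ud f_i(X_{f_j}^\alpha) = \sum_i(\partial_i\mu\circ f)\bigl(\{f_j,f_i\}_\alpha + \ud f_j(R_\alpha)\,f_i\bigr) = 0$ by involutivity and $R_\alpha\cdot f_j=0$. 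Thus $\{\mu'\circ f,\mu\circ f\}_\alpha = 0$. The only non-routine step is spotting that the decomposition of $X_{\mu'\circ f}^\alpha$ requires the correction term $g\,R_\alpha$ to keep $\alpha(Y) = \mu'\circ f$; everything else is bookkeeping with the two defining equations of a contact vector field.
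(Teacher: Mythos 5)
Your proof is correct, and it takes a genuinely different route from the paper's. The paper proves the identity $\{\mu'\circ f,\mu\circ f\}_\alpha=0$ by passing to local Darboux coordinates, writing down the explicit local expression for $X_h^\alpha$ when $\ud h(R_\alpha)=0$, and then expanding $\{\mu'\circ f,\mu\circ f\}_\alpha$ via the chain rule until it reduces to a double sum over the $\{f_l,f_k\}_\alpha$. You instead stay coordinate-free: you verify, against the two defining equations of a contact vector field (which, as the paper notes, determine $X_h^\alpha$ uniquely), the decomposition
\[
X_{\mu'\circ f}^\alpha = \sum_j(\partial_j\mu'\circ f)\,X_{f_j}^\alpha + \Bigl(\mu'\circ f - \sum_j(\partial_j\mu'\circ f)\,f_j\Bigr)R_\alpha,
\]
which is valid precisely because $R_\alpha\cdot f_j=0$ kills the $\ud f_j(R_\alpha)\alpha$ terms; the bracket then vanishes because every vector field appearing on the right annihilates $\mu\circ f$ (the $X_{f_j}^\alpha$ by involutivity, $R_\alpha$ by Reeb-invariance). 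Your argument buys a cleaner, global statement that exposes the underlying mechanism and avoids invoking Darboux's theorem; the paper's computation is more pedestrian but requires no insight beyond the local normal form. All the small steps in your write-up check out: the identification $\{f_i,1\}_\alpha=-R_\alpha\cdot f_i$, the consistency of the second defining equation when paired with $R_\alpha$, and the final reduction $\ud f_i(X_{f_j}^\alpha)=\{f_j,f_i\}_\alpha+\ud f_j(R_\alpha)f_i=0$.
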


\begin{proof}
First, let us show that $\{1,\mu\circ f\}_\alpha=0$. Indeed, since $\{1,f_i\}_\alpha=\ud f_i(R_\alpha)=0$ for all $i\in[1,N]\cap \N$ we have
\[\{1,\mu\circ f\}_\alpha=\ud (\mu\circ f)(R_\alpha)=\ud_f\mu\circ\ud f(R_\alpha)=\ud _f\mu\circ(\ud f_1(R_\alpha),\cdots,\ud f_N(R_\alpha))=0.\]
The same arguments obviously holds to show that $\{1,\mu'\circ f\}_\alpha=0$. \\

Let us show now that $\{\mu'\circ f,\mu\circ f\}_\alpha=0$. The dimension of $M$ being $2n+1$, by Darboux Theorem, for any point $p$, there exists a neighborhood $U\subset M$ of $p$ and a diffeomorphism $\varphi=(x_1,\cdots,x_n,y_1,\cdots,y_n,z) : U\to\R^{2n+1}$ such that $\alpha|_U=\ud z-\sum\limits_{i=1}^ny_i\ud x_i$. A direct computation shows that $R_\alpha|_{U}=\frac{\partial}{\partial z}$ and 
    \begin{equation}\label{eq:contact vf}
X_h^\alpha|_{U}=\left(\sum\limits_{j=1}^n\left(\frac{\partial h}{\partial x_j}+y_j\frac{\partial h}{\partial z}\right)\frac{\partial}{\partial y_j}-\frac{\partial h}{\partial y_j}\frac{\partial}{\partial x_j}\right)+\left(h-\sum\limits_{j=1}^ny_j\frac{\partial h}{\partial y_j}\right)\frac{\partial}{\partial z},
    \end{equation}
    for any smooth function $h: M\to \R$.
    Therefore, if $h :M\to\R$ satisfies $\{1,h\}_\alpha=\ud h(R_\alpha)=0$ it implies in particular that $\{1,h\}_\alpha|_U=\frac{\partial h}{\partial z}=0$ and the above expression \eqref{eq:contact vf} simplifies to
\begin{equation*}\label{eq:contact vf RI}
X_h^\alpha|_{U}=\left(\sum\limits_{j=1}^n\frac{\partial h}{\partial x_j}\frac{\partial}{\partial y_j}-\frac{\partial h}{\partial y_j}\frac{\partial}{\partial x_j}\right)+\left(h-\sum\limits_{j=1}^ny_j\frac{\partial h}{\partial y_j}\right)\frac{\partial}{\partial z}.
    \end{equation*}
    Thus if $h, h':M\to\R$ are two smooth functions satisfying $\{1,h\}_\alpha=\{1,h'\}_\alpha=0$, a direct computation yields 
    \begin{equation*}
        \{h',h\}_\alpha|_U=\ud h(X_{h'}^\alpha)|_U=\sum\limits_{j=1}^n\left(-\frac{\partial h}{\partial x_j}\frac{\partial h'}{\partial y_j}+\frac{\partial h}{\partial y_j}\frac{\partial h'}{\partial x_j}\right).
    \end{equation*}
    Denoting by $(a_1,\cdots,a_N)$ the coordinate functions on $\R^N$ and applying the previous equality  to $h=\mu\circ f$, $h'=\mu'\circ f$  leads to 
    \[\begin{aligned}
        \{\mu'\circ f,\mu\circ f\}_\alpha|_U&=\sum\limits_{j=1}^n\left(-\frac{\partial (\mu\circ f)}{\partial x_j}\frac{\partial (\mu'\circ f)}{\partial y_j}+\frac{\partial (\mu\circ f)}{\partial y_j}\frac{\partial (\mu'\circ f)}{\partial x_j}\right)\\
    &=\sum\limits_{j=1}^n\left(-\left(\sum\limits_{k=1}^N\frac{\partial\mu}{\partial a_k}\frac{\partial f_k}{\partial x_j}\right)\left(\sum\limits_{l=1}^N\frac{\partial \mu'}{\partial a_l}\frac{\partial f_l}{\partial y_j}\right)+\left(\sum\limits_{l=1}^N\frac{\partial \mu'}{\partial a_l}\frac{\partial f_l}{\partial x_j}\right)\left(\sum\limits_{k=1}^N \frac{\partial \mu}{\partial a_k}\frac{\partial f_k}{\partial y_j}\right)\right)\\
&=\sum\limits_{j=1}^n\left(\sum\limits_{k=1}^N\sum\limits_{l=1}^N \frac{\partial \mu}{\partial a_k}\frac{\partial \mu'}{\partial a_l}\left(-\frac{\partial f_k}{\partial x_j}\frac{\partial f_l}{\partial y_j}+\frac{\partial f_k}{\partial y_j}\frac{\partial f_l}{\partial x_j}\right)\right)\\
&=\sum\limits_{k=1}^N\sum\limits_{l=1}^N \frac{\partial \mu}{\partial a_k}\frac{\partial \mu'}{\partial a_l}\left(\sum\limits_{j=1}^n\left(-\frac{\partial f_k}{\partial x_j}\frac{\partial f_l}{\partial y_j}+\frac{\partial f_k}{\partial y_j}\frac{\partial f_l}{\partial x_j}\right)\right)\\
&=\sum\limits_{k=1}^N\sum\limits_{l=1}^N \frac{\partial \mu}{\partial a_k}\frac{\partial \mu'}{\partial a_l}\{f_l,f_k\}_\alpha|_U=0.\qedhere
    \end{aligned}\]

\end{proof}

The proof of Proposition \ref{prop:contact quasi-measure} we give below repeats almost verbatim the proof of Lemma 8.2 and Lemma 8.3 of \cite{djordjević2025quantitativecontacthamiltoniandynamics}.

\begin{proof}[Proof of Proposition \ref{prop:contact quasi-measure}]
The normalization property of $\tau_\alpha$ follows directly from the normalization and monotonicity of $\zeta_\alpha$. The monotonicity of $\tau_\alpha$ is a  direct consequence of its definition. The vanishing property of $\tau_\alpha$ follows also immediatly from the vanishing property of $\zeta_\alpha$. Therefore it remains to prove the subadditivity property.\\

Note that, by applying an induction argument, it is enough to prove that \[\tau\left(f^{-1}(X_1)\cup f^{-1}(X_2))\leq \tau(f^{-1}(X_1))+\tau(f^{-1}(X_2)\right)\] for any two bounded open subsets $X_1, X_2\subset \R^N$. Let $\varepsilon>0$ be a positive number. By Lemma \ref{lem:contact quasi-measure} there exists $\mu:\R^n\to[0,1]$ a smooth function supported in $U:=X_1\cup X_2$ so that \begin{equation}\label{eq:preuve subadditivity 1}
\tau(f^{-1}(X_1)\cup f^{-1}(X_2))=\tau(f^{-1}(X_1\cup X_2))\leq\zeta_\alpha(\mu\circ f)+\varepsilon.
\end{equation}
Consider now two smooth functions $\mu_1,\mu_2:\R^N\to [0,1]$ so that $\mu_1$ is supported in $X_1$, $\mu_2$ is supported in $X_2$ and $\mu=\mu_1+\mu_2$. To construct such functions $\mu_1,\mu_2$, one may take a partition of unity on $U$ subordinate to the open cover $\{X_1,X_2\}$. More precisely, let $\rho_1, \rho_2 : U \to[0,1]$ be two smooth functions supported in $X_1$ and $X_2$ respectively, so that $\rho_1+\rho_2=1$. Thus, for $i\in\{1,2\}$, one can define \[\mu_i: \R^N\to [0,1] \quad \quad  x\mapsto \begin{cases}
\rho_i(x)\mu(x) & \text{ if } x\in U\\
0 & \text{ otherwise}.
\end{cases}
\]
Therefore we have
\begin{equation}\label{eq:preuve subadditivity 2} \zeta_\alpha(\mu\circ f)= \zeta_\alpha(\mu_1\circ f+\mu_2\circ f)\leq \zeta_\alpha(\mu_1\circ f)+\zeta_\alpha(\mu_2\circ f)
\end{equation}
where the inequality is a consequence of the triangle inequality property that we can apply thanks to Lemma \ref{lem:poisson bracket}. Combining the relations \eqref{eq:preuve subadditivity 1} and \eqref{eq:preuve subadditivity 2} we get 
\[\begin{aligned}\tau(f^{-1}(X_1)\cup f^{-1}(X_2))\leq \zeta_\alpha(\mu\circ f)+\varepsilon&\leq \zeta_\alpha(\mu_1\circ f)+\zeta_\alpha(\mu_2\circ f)+\varepsilon\\
&\leq \tau(f^{-1}(X_1))+\tau(f^{-1}(X_2))+\varepsilon
\end{aligned}\]
where the last inequality follows again from Lemma \ref{lem:contact quasi-measure}.
Since $\varepsilon>0$ can be chosen arbitrarily small, this concludes the proof. \end{proof}


We are now ready to prove Theorem \ref{thm:bft} and Theorem \ref{thm:bft in lens space}.
\begin{proof}[Proof of Theorem \ref{thm:bft} and Theorem \ref{thm:bft in lens space}]\

\textbf{First step:} We claim that if the fiber $A:=f^{-1}\{y\}$ over some $y\in \R^N$ is displaceable by $\varphi\in\conto(M,\xi)$ then there exists a bounded open neighborhood $U$ of $y$ such that $f^{-1}(U)$ is also displaceable by $\varphi$. Indeed, since $A\cap\varphi(A)=\emptyset$, there exists open neighborhoods $V$ and $V'$, of $A$ and $\varphi(A)$ respectively, so that $V\cap V'=\emptyset$. Moreover, by continuity of $\varphi$, for any point $a\in A$ there exists a neighborhood $V_a\subset V$ such that $\varphi(V_a)\subset V'$. Finally, since $M$ is compact there exists a bounded open neighborhood $U$ of $y$ such that $f^{-1}(U)\subset W:=\underset{a\in A}\bigcup V_a$, and thus
\[\varphi(f^{-1}(U))\cap f^{-1}(U)\subset \varphi(W)\cap W\subset V'\cap V=\emptyset\]
which proves our claim.

\textbf{Second step:} Suppose by contradiction that the fiber $f^{-1}\{y\}$ is displaceable for all $y\in\R^N$. Then by the first step, for all $y\in\R^N$, there exists a bounded open neighborhood $U_y\subset \R^N$ of $y$ so that $f^{-1}(U_y)$ is displaceable. Since $M$ is compact, there exists an integer $k\in\N_{>0}$ and $y_1,\cdots,y_k\in\R^N$ so that $\underset{i\in[1,k]\cap \N}\bigcup f^{-1}(U_{y_i})=M$ and thus leads to the following contradiction 
\[1=\tau_\alpha(M)=\tau\left(\underset{i\in[1,k]\cap \N}\bigcup f^{-1}(U_{y_k})\right)\leq \sum\limits_{i=1}^k\tau\left(f^{-1}(U_{y_k})\right)\]
since $\tau_\alpha(f^{-1}(U_{y_k}))=0$ by the vanishing property of $\tau_\alpha$.  
\end{proof}

\section{Proof of Proposition \ref{prop:weinstein}}\label{se:weinstein preuve}
In this Section we prove Proposition \ref{prop:weinstein}. Let us recall that a contact form $\beta$ in a contact manifold $(G,\Xi)$ is complete if its Reeb vector field is complete.

\begin{lem}\cite[Lemma 3.2]{allais2023spectral}\label{lem:invariance signe}
    Let $\Lambda$ be a closed Legendrian of a cooriented contact manifold $(G,\Xi)$ such that $\uLeg(\Lambda)$ is orderable and $\beta$ a complete contact form supporting $\Xi$. For any smooth function $f : N\to\R$, if $\gamma:=e^f\beta$ is a complete contact form
\begin{equation*}
        \begin{cases}
            e^{\inf f}\ell_\pm^\beta(\Lambda_1,\Lambda_0)
            \leq \ell_\pm^{\gamma}(\Lambda_1,\Lambda_0) \leq
            e^{\sup f}\ell_\pm^\beta(\Lambda_1,\Lambda_0)
            &\text{when } \ell_\pm^{\gamma}(\Lambda_1,\Lambda_0)\leq 0,\\
            e^{\sup f}\ell_\pm^\beta(\Lambda_1,\Lambda_0)
            \leq \ell_\pm^\gamma(\Lambda_1,\Lambda_0) \leq
            e^{\inf f}\ell_\pm^\beta(\Lambda_1,\Lambda_0)
            &\text{when }\ell_\pm^\gamma(\Lambda_1,\Lambda_0)\leq 0.
        \end{cases}
    \end{equation*}
\end{lem}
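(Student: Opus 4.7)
The plan is to compare the Reeb flows of $\beta$ and $\gamma = e^f\beta$ at the level of autonomous $\beta$-Hamiltonians, and then to transfer pointwise inequalities of Hamiltonians into inequalities between the Legendrian spectral selectors via the standard Eliashberg-Polterovich monotonicity (Hamiltonian comparison implies comparison of the generated elements with respect to $\cleq$).

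\textbf{Step 1 (invariance of the order).} First I would note that $\cleq$ on $\uLeg(\Lambda)$ depends only on the cooriented contact structure, not on the specific supporting form. Indeed $\gamma(X) = e^f\beta(X)$ has the same sign as $\beta(X)$, so non-negative isotopies for $\beta$ and for $\gamma$ coincide. Therefore the definitions of $\ell_\pm^\beta$ and $\ell_\pm^\gamma$ involve the same partial order, only with different Reeb reference flows.

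\textbf{Step 2 (computing the $\beta$-Hamiltonian of $R_\gamma$).} From $\gamma(R_\gamma) = 1$ one gets $\beta(R_\gamma) = e^{-f}$, so $R_\gamma$ is the autonomous contact vector field associated to the $\beta$-Hamiltonian $e^{-f}$. Since $\inf f \leq f \leq \sup f$, we have
\[
s e^{-\sup f} \;\leq\; s e^{-f} \;\leq\; s e^{-\inf f} \qquad (s\geq 0),
\]
with the inequalities reversed for $s\leq 0$. The Eliashberg-Polterovich monotonicity principle (pointwise inequalities of contact Hamiltonians imply $\cleq$-comparison of the generated elements in $\widetilde{\conto}(G,\Xi)$) then yields, for $s\geq 0$,
\[
\tphi_\beta^{s e^{-\sup f}} \;\cleq\; \tphi_\gamma^{s} \;\cleq\; \tphi_\beta^{s e^{-\inf f}},
\]
and the reverse chain for $s\leq 0$. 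Acting on $\tLambda_0$ and invoking invariance of $\cleq$ under the left action of $\widetilde{\conto}(G,\Xi)$ on $\uLeg(\Lambda)$ gives the same inequalities after appending $\cdot\,\tLambda_0$.

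\textbf{Step 3 (transfer to the selectors).} Now I would plug these into the definitions $\ell_+^\gamma = \inf\{t : \tLambda_1 \cleq \tphi_\gamma^t\cdot\tLambda_0\}$ and $\ell_-^\gamma = \sup\{t : \tphi_\gamma^t\cdot\tLambda_0 \cleq \tLambda_1\}$. Consider the case $\ell_+^\gamma \geq 0$. For every $t>\ell_+^\gamma$ (in particular $t\geq 0$) one has $\tLambda_1 \cleq \tphi_\gamma^t\cdot\tLambda_0 \cleq \tphi_\beta^{t e^{-\inf f}}\cdot\tLambda_0$, whence $\ell_+^\beta\leq t e^{-\inf f}$; taking the infimum in $t$ gives $e^{\inf f}\ell_+^\beta \leq \ell_+^\gamma$. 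Conversely, for any $s>\ell_+^\beta$ with $s\geq 0$, the inclusion $\tphi_\beta^s\cdot\tLambda_0 \cleq \tphi_\gamma^{s e^{\sup f}}\cdot\tLambda_0$ yields $\ell_+^\gamma \leq s e^{\sup f}$. To conclude $\ell_+^\gamma \leq e^{\sup f}\ell_+^\beta$ one must know that $\ell_+^\beta\geq 0$, which follows by a consistency argument: if $\ell_+^\gamma\geq 0$ but $\ell_+^\beta<0$, a choice $s\in(\ell_+^\beta,0)$ combined with the $s\leq 0$ half of Step~2 (namely $\tphi_\beta^s\cdot\tLambda_0 \cleq \tphi_\gamma^{s e^{\inf f}}\cdot\tLambda_0$) forces $\ell_+^\gamma < 0$, a contradiction. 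Hence the signs of $\ell_+^\beta$ and $\ell_+^\gamma$ agree and the bound is obtained by letting $s\to\ell_+^\beta$ from above. The case $\ell_+^\gamma\leq 0$ is symmetric, using the $s\leq 0$ chain, which swaps $\sup f$ and $\inf f$. The argument for $\ell_-^\gamma$ is entirely parallel after interchanging $\inf$ and $\sup$ in the defining formula.

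\textbf{Main obstacle.} The analytic input is standard; the genuine difficulty is the bookkeeping of signs in Step~3. One must carefully match each of the four cases (sign of $\ell_\pm^\gamma$ combined with the sign of the test parameter $s$ or $t$) with the correct half of the chain from Step~2, and verify that the signs of $\ell_\pm^\beta$ and $\ell_\pm^\gamma$ agree in order to close the argument on both sides of the bound.
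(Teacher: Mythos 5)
Your argument is correct, and it is essentially the standard one: the paper itself gives no proof of this lemma (it is imported from \cite[Lemma 3.2]{allais2023spectral}), and the cited proof proceeds exactly as you do, by observing that $R_\gamma$ has $\beta$-contact Hamiltonian $e^{-f}$, applying monotonicity to get $\tphi_\beta^{se^{-\sup f}}\cleq\tphi_\gamma^{s}\cleq\tphi_\beta^{se^{-\inf f}}$ for $s\geq 0$ (reversed for $s\leq 0$), and transferring this to $\ell_\pm$ via the definitions, with the sign-agreement check you spell out in Step~3. One small point worth noting: as printed here both cases of the statement read ``$\leq 0$''; the first case should read ``$\geq 0$'', which is precisely the case your analysis for $\ell_+^\gamma\geq 0$ establishes, so your case bookkeeping is consistent with the intended statement.
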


\begin{proof}[Proof of Proposition \ref{prop:weinstein}]
First note that since $C_-^\alpha\leq C_+^\alpha$, the map $N_{\mathrm{spec}}^\alpha$ takes value in $\R_{\geq 0}$.  \\

Let $\tphi\in\widetilde{\conto}(M,\xi)$ so that $N_{\mathrm{spec}}^\alpha(\tphi)=0$. Then it implies that $C_+^\alpha(\tphi)=C_-^\alpha(\tphi)=0$ and so $\Pi(\tphi)=\id$. Hence, $N_{\mathrm{spec}}^\alpha(\tpsi)>0$ implies that $\tpsi\in\widetilde{\conto}(M,\xi)\setminus\pi_1(\conto(M,\xi))$.\\

Finally, let us consider $\tphi\in\pi_1(\conto(M,\xi))$ so that $c:=N_{\mathrm{spec}}^\alpha(\tphi)>0$. Since $\Pi(\tphi)=\id$ it implies, by spectrality of $C_\pm^\alpha$, that there exists $x\in M$ such that  $\varphi_\alpha^c(x)=x$, i.e. the $\alpha$-Reeb flow admits a non trivial periodic orbit. Moreover, for any contact form $\alpha'$ supporting $\xi$ and its coorientation, there exists a smooth function $f: M\to\R$ such that $\alpha'=e^f\alpha$ and so by Lemma \ref{lem:invariance signe} and \eqref{eq:def du selecteur spectral} we deduce that $c':=N_{\mathrm{spec}}^{\alpha'}(\tphi)>0$. Hence, arguing as before it implies that there exists $x'\in M$ such that $\varphi_{\alpha'}^{c'}(x')=x'$ and so the $\alpha'$-Reeb flow admits a non trivial periodic orbit. 
\end{proof}

\bibliographystyle{amsplain}
\bibliography{biblio}

\end{document}